\theoremstyle{plain}
\newtheorem{thm}{Theorem}[section]
\newtheorem*{thm*}{Theorem}
\newtheorem{lem}[thm]{Lemma}
\newtheorem{cor}[thm]{Corollary}
\newtheorem{prop}[thm]{Proposition}
\theoremstyle{remark}
\theoremstyle{definition}
\newtheorem{defin}[thm]{Definition}
\DeclareFontFamily{OT1}{pzc}{}
\DeclareFontShape{OT1}{pzc}{m}{it}{<-> s * [1.10] pzcmi7t}{}
\DeclareMathAlphabet{\mathpzc}{OT1}{pzc}{m}{it}
\newcommand{\Ecal}{\mathcal{E}}
\newcommand{\Ccal}{\mathcal{C}}
\newcommand{\Fcal}{\mathcal{F}}
\newcommand{\Hcal}{\mathcal{H}}
\newcommand{\Jcal}{\mathcal{J}}
\newcommand{\Lcal}{\mathcal{L}}
\newcommand{\Mcal}{\mathcal{M}}
\newcommand{\Ocal}{\mathcal{O}}
\newcommand{\Pcal}{\mathcal{P}}
\newcommand{\Ucal}{\mathcal{U}}
\newcommand{\Xcal}{\mathcal{X}}
\newcommand{\Ycal}{\mathcal{Y}}
\newcommand{\ZZ}{\mathbb{Z}}
\newcommand{\CC}{\mathbb{C}}
\newcommand{\Zp}{\mathbb{Z}_p}
\newcommand{\Qp}{{\mathbb{Q}_p}}
\newcommand{\con}[1]{\nabla_{#1}}
\newcommand{\Po}{\mathcal{P}} 
\newcommand{\Ed}{E^\vee} 
\newcommand{\Ef}{\hat{E}}
\newcommand{\Etriv}{E^{\mathrm{triv}}}
\newcommand{\Eftriv}{{\hat{E}^{\mathrm{triv}}}}
\newcommand{\Edtriv}{E^{\mathrm{triv,\vee}}}
\newcommand{\Edftriv}{\hat{E}^{\mathrm{triv},\vee}}
\newcommand{\Mtriv}{{M^{\mathrm{triv}}}}
\newcommand{\Gmf}[1]{\widehat{\mathbb{G}}_{m,#1}}
\newcommand{\Gmfabs}{\widehat{\mathbb{G}}_{m}}
\newcommand{\Pof}{\widehat{\Po}}
\newcommand{\id}{\mathrm{id}}
\newcommand{\righteq}{\stackrel{\sim}{\rightarrow}}
\newcommand{\pr}{\mathrm{pr}}
\newcommand{\dd}{\mathrm{d}}
\newcommand{\om}{\underline{\omega}}
\newcommand{\HdR}[2]{\underline{H}^{#1}_{\mathrm{dR}}\left( #2 \right)}
\newcommand{\HdRabs}[2]{H^{#1}_{\mathrm{dR}}\left( #2 \right)}
\newcommand{\EisD}{\prescript{}{D} E^{k,r+1}_{t}}
\newcommand{\VpN}{V\left(\Zp,\Gamma(N)\right)}
\newcommand{\Ln}{\mathcal{L}_n}
\newcommand{\lnD}{l^D_n}
\newcommand{\triv}{\mathrm{triv}}
\newcommand{\Hom}{\mathrm{Hom}} 
\newcommand{\trans}{\tilde{\mathrm{U}}} 
\newcommand{\EispD}{\prescript{}{D}{\mathcal{E}}^{k,r+1}_{(a,b)}}
\newcommand{\Frob}{\mathrm{Frob}}
\newcommand{\muEisD}{\mu^{\mathrm{Eis}}_{D,(a,b)}}
\newcommand{\pthetaD}{\prescript{}{D}\vartheta}
\newcommand{\pthetaDp}{\prescript{}{D}\vartheta^{(p)}}
\newcommand{\VIC}[1]{\mathrm{VIC}\left( #1 \right)}
\newcommand{\gr}{\mathrm{gr}}
\newcommand{\Logn}{\mathrm{Log}^n_{\mathrm{dR}}}
\newcommand{\Log}[1]{\mathrm{Log}^{#1}_{\mathrm{dR}}}
\newcommand{\idM}{\mathds{1}}
\newcommand{\polD}{\mathrm{pol}_{D,\mathrm{dR}}}
\newcommand{\LnD}{L_n^D}
\newcommand{\KK}{K}
\newcommand{\scan}{s_{\mathrm{can}}}
\newcommand{\Ee}{\mathscr{E}}
\newcommand{\Mm}{\mathscr{M}}
\newcommand{\Ss}{\mathscr{S}}
\newcommand{\Uu}{\mathscr{U}}
\newcommand{\Vv}{\mathscr{V}}
\newcommand{\Xx}{\mathscr{X}}
\newcommand{\Yy}{\mathscr{Y}}
\newcommand{\DR}{\mathrm{DR}}
\newcommand{\an}{{an}}
\newcommand{\rig}{{rig}}
\newcommand{\Mftriv}{\Mcal^{\mathrm{triv}}}
\newcommand{\Hsyn}[2]{\underline{H}^{#1}_{\mathrm{syn}}\left( #2 \right)}
\newcommand{\Hrigabs}[2]{H^{#1}_{\mathrm{rig}}\left( #2 \right)}
\newcommand{\Hsynabs}[2]{H^{#1}_{\mathrm{syn}}\left( #2 \right)}
\newcommand{\Logsyn}[1]{\mathpzc{Log}^{#1}_{\mathrm{syn}}}
\newcommand{\LogsyndR}[1]{\mathrm{Log}^{#1}_{\mathrm{dR}}}
\newcommand{\polsyn}{\mathpzc{pol}_{D,\mathrm{syn}}}
\newcommand{\ord}{\mathrm{ord}}
\newcommand{\eD}{\hat{\mathrm{e}}_{{\tilde{t}},(k,l)}}
\newcommand{\Cp}{{\mathbb{C}_p}}
\DeclareMathOperator{\Spf}{{\mathrm{Spf}}}
\DeclareMathOperator{\Spec}{Spec}
\DeclareMathOperator{\Res}{Res}
\DeclareMathOperator{\Sym}{\underline{Sym}}
\DeclareMathOperator{\Inf}{Inf}
\DeclareMathOperator{\TSym}{\underline{TSym}}
\DeclareMathOperator{\Pic}{\underline{\mathrm{Pic}}^0_{E/S}}
\DeclareMathOperator{\Ext}{\mathrm{Ext}}
\DeclareMathOperator{\Meas}{\mathrm{Meas}}
\title[The syntomic realization of the elliptic polylogarithm]{The syntomic realization of the elliptic polylogarithm via the Poincar\'e bundle}
\author{Johannes Sprang}
\email{johannes.sprang@mathematik.uni-regensburg.de }
\date{}
\begin{document}

\begin{abstract}
We give an explicit description of the syntomic elliptic polylogarithm on the universal elliptic curve over the ordinary locus of the modular curve in terms of certain $p$-adic analytic moment functions associated to Katz' two-variable $p$-adic Eisenstein measure. The present work generalizes previous results of Bannai--Kobayashi--Tsuji and Bannai--Kings on the syntomic Eisenstein classes.
\end{abstract}

\maketitle

\section{Introduction}
In his seminal paper \cite{beilinsonConj}, Beilinson has formulated his well-known conjectures expressing values of $L$-functions up to a rational factor in terms of motivic cohomology classes under the image of the regulator map to Deligne cohomology. In order to study particular cases of these conjectures, it is indispensable to construct cohomology classes in Deligne cohomology of motivic origin and then relate them to $L$-values. The elliptic poylogarithm, defined by Beilinson and Levin in \cite{beilinson_levin}, is a very important source of such cohomology classes. For a $p$-adic variant of the Beilinson conjectures one needs a suitable $p$-adic equivalent of Deligne cohomology. It turns out that syntomic cohomology can be seen as a $p$-adic substitute for Deligne cohomology. In the case of good reduction Bannai has proven that syntomic cohomology can be seen as absolute $p$-adic Hodge cohomology \cite{bannai_absHodge}. Thus, for studying particular cases of $p$-adic versions of the Beilinson conjectures, like the Perrin-Riou conjecture, we wish to have a good understanding of polylogarithmic cohomology classes in syntomic cohomology.\par 
The syntomic Eisenstein classes are the cohomology classes obtained by restricting the elliptic polylogarithm along torsion sections. This describes only a shadow of the elliptic polylogarithm, but the syntomic Eisenstein classes and related cohomology classes play an important role in recent research: Bertolini, Darmon and Rotger initiated a program for systematically studying Rankin-Selberg convolutions in $p$-adic families \cite{BDR1}, \cite{BDR2}. This has been continued by work of Kings, Loeffler and Zerbes. Eisenstein classes are the key in the construction of the Rankin-Eisenstein classes considered by Kings--Loeffler--Zerbes in \cite{KLZ1}. They have related the syntomic Rankin-Eisenstein classes to $p$-adic Rankin $L$-functions. Relating the etale Rankin-Eisenstein classes to the syntomic ones allows them to prove an explicit reciprocity law proving the non-triviality of the Euler system of Rankin-Eisenstein classes \cite{KLZ2}.\par 
 While we have a good understanding of the Deligne realization of the elliptic polylogarithm from Beilinson--Levin, there are only two partial results on its syntomic realization: In \cite{BKT} Bannai--Kobayashi--Tsuji have given an explicit description of the syntomic realization for a single elliptic curve with complex multiplication defined over a subfield of $\CC$. Unfortunately, this method does not immediately generalize to more general elliptic curves, since it builds on explicit calculations involving the analytically defined Kronecker theta function. Complex multiplication is needed to guarantee the algebraicity of the involved theta function. On the other hand, according to work of Bannai and Kings, we have a good understanding of the syntomic Eisenstein classes on the ordinary locus of the modular curve in terms of $p$-adic Eisenstein series. 
While the result of Bannai--Kobayashi--Tsuji only covers the case of single CM elliptic curves, the result of Bannai--Kings is limited to the Eisenstein classes on the ordinary locus of the modular curve. In this paper, we give a common generalization to both results: We give an explicit description of the syntomic elliptic polylogarithm on the universal elliptic curve over the ordinary locus of the modular curve in terms of certain $p$-adic analytic moment functions associated to Katz' two-variable $p$-adic Eisenstein measure. 

\begin{thm*}[cf.~Theorem\footnote{ for a more precise version of the theorem, see the main body of the text} \ref{thm_mainThm}]
	There is a compatible system of overconvergent sections in the syntomic logarithm sheaves $\rho_n\in\Gamma\left(\bar{\Ee}_K,j_D^\dagger(\LogsyndR{n})\right)$ describing the $D$-variant of the syntomic polylogarithm on the ordinary locus of the modular curve 
	\[
		\polsyn=\left([\rho_n]\right)_{n\geq 0}\in\varprojlim_n \Hsynabs{1}{\Uu_D,\Logsyn{n}(1)}.
	\]
	In tubular neighbourhoods $]\tilde{t}[$ of torsion sections we have the following explicit description of these overconvergent sections in terms of moment functions of Katz' two-variable $p$-adic Eisenstein measure
	\[
		\rho_n|_{]\tilde{t}[}(s)=\sum_{k+l\leq n} (-1)^{l} l! \int_{\Zp^\times\times \Zp} y^k x^{-(l+1)} (1+s)^x \dd \mu^{\mathrm{Eis}}_{D,(a,b)}(x,y)\cdot \hat{\omega}^{[k,l]}
	\]
	with values in $p$-adic modular forms.
\end{thm*}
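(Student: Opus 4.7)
The plan is to reverse-engineer the theorem from its explicit formula: first write down the candidate sections $\rho_n$ by the displayed expression on tubular neighbourhoods of torsion sections, glue them into overconvergent global sections over the ordinary locus minus $D$, and finally identify the resulting cohomology class with the syntomic polylogarithm via a residue characterization. The key conceptual input, which I assume is set up earlier in the paper, is the interpretation of $\LogsyndR{n}$ through the Poincar\'e bundle $\Po$ on $E\times\Ed$, equipped with its canonical unit-root trivialization on the ordinary Igusa tower. In this picture, the basis elements $\hat{\omega}^{[k,l]}$ encode the $(k,l)$-graded piece of the expansion of the trivialization against the completed symmetric algebra on $\HdR{1}{E/S}$, and Katz' two-variable Eisenstein measure $\muEisD$ arises from pairing this trivialization with an Eisenstein-type distribution on $\Zp^\times\times\Zp$.

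Within that framework the displayed formula is the natural candidate: the test function $(1+s)^x$ comes from identifying the formal group at $\tilde{t}$ with $\Gmfabs$ via the canonical subgroup (so that $s$ is the multiplicative parameter), while the integrand $y^k x^{-(l+1)}$ selects the $(k,l)$-graded component and provides the $(l{+}1)$-fold primitive in the $x$-direction required for a polylogarithmic cocycle. Overconvergence on strict neighbourhoods of $]\tilde{t}[$ is then immediate from the uniform $p$-adic analyticity of $x\mapsto (1+s)^x$ for $s$ in a wide tube combined with the boundedness of $\muEisD$; coherence across different torsion sections follows from the equivariance of Katz' measure under the relevant translation actions; and the Frobenius structure built into $\Logsyn{n}$ is matched by the defining $p$-adic interpolation of $\muEisD$ in the variable $x$. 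Once these pieces are assembled, $\rho_n$ becomes a well-defined overconvergent section of $j_D^\dagger \LogsyndR{n}$, and the coboundary, together with its explicit Frobenius pairing on $]\tilde{t}[$, gives a cocycle representing a class in $\Hsynabs{1}{\Uu_D, \Logsyn{n}(1)}$.

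To identify $([\rho_n])_{n}$ with $\polsyn$ I would invoke a uniqueness characterization analogous to Beilinson--Levin's in the de Rham setting: the syntomic polylogarithm is the unique compatible system in $\varprojlim_n \Hsynabs{1}{\Uu_D,\Logsyn{n}(1)}$ whose residue along $D$ equals the canonical trace class. Computing the residue of $[\rho_n]$ along each component of $D$ reduces, via the explicit formula, to the standard distribution/residue relation of Katz' measure, which is precisely what was built into $\muEisD$. The main obstacle will be the assembly step and its interaction with the Frobenius: passing from the concrete convergent integrals on individual $]\tilde{t}[$ to a single overconvergent section of $j_D^\dagger\Logsyn{n}$ that is compatible with the unit-root Frobenius requires transporting the $p$-adic interpolation of Katz' measure under multiplication by $p$ in the $x$-variable through the Poincar\'e bundle trivialization. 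Getting this compatibility right, and then upgrading the local formulas to a genuine syntomic cocycle, is the technical heart of the argument.
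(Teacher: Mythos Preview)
Your proposal runs in the wrong direction and contains a genuine gap at the gluing step. You want to define $\rho_n$ locally on the tubular neighbourhoods $]\tilde t[$ of $N$-torsion sections by the displayed integral formula and then ``glue them into overconvergent global sections over the ordinary locus minus $D$''. But the discs $]\tilde t[$ do not come close to covering $\bar{\Ecal}_K$; they are just the residue disks around finitely many $N$-torsion sections. There is no mechanism here for producing a section of $j_D^\dagger\LogsyndR{n}$ on the complement of $E[D]$ from this local data, and your sketch of ``coherence across different torsion sections via equivariance of Katz' measure'' does not address this at all. Likewise, your residue computation is aimed at the wrong locus: the polylogarithm is characterized by its residue along $E[D]$, whereas your explicit formula lives near $N$-torsion with $(N,D)=1$.

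The paper proceeds in the opposite direction. The global existence of $\rho_n$ is established \emph{abstractly}: by the explicit de Rham realization via the Poincar\'e bundle one already has closed $1$-forms $L_n^D$ representing $\polD$, and then Bannai--Kings' description of $H^1_{\mathrm{syn}}$ (their Proposition~A.16, reproduced here as \cref{prop_BK}) guarantees a unique overconvergent $\rho_n$ satisfying $\nabla(\rho_n)=(1-\Phi)(L_n^D)$. Only after this does one restrict to $]\tilde t[$ and compute. On the disk the differential equation becomes an explicit first-order system for the coefficient functions $\hat e_{\tilde t,(k,l)}$, but this system alone does not determine them (it leaves an additive constant at each step). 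The missing ingredient is a trace-zero condition $\sum_{\zeta\in\mu_p}\hat e_{\tilde t,(k,l)}(s+_{\widehat{\mathbb G}_m}\zeta)=0$, proved from the distribution relation of the Kronecker section together with the global horizontality of $\rho_n$. With that extra condition the local system has a unique solution, and one checks directly that the moment-function formula satisfies both the differential equations and the trace condition. Your proposal never isolates the differential equation $\nabla(\rho_n)=(1-\Phi)(L_n^D)$ as the defining relation, and the trace-zero lemma---which is the actual technical heart---does not appear.
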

Although the complex analytic Kronecker theta function is not applicable in our setup, we essentially follow the strategy of Bannai--Kobayashi--Tsuji. The geometry of the Poincar\'e bundle serves as a substitute for the Kronecker theta function. More precisely, we make use of the purely algebraically defined \emph{Kronecker section} of the Poincar\'e bundle, which has been fruitfully applied in \cite{EisensteinPoincare} to study algebraic and $p$-adic properties of Eisenstein--Kronecker series. Since the syntomic realization refines the algebraic de Rham realization, we need a good understanding of the latter. This has been established in our earlier work \cite{deRham}. Building on previous work of Scheider, we have given there an explicit description of the algebraic de Rham realization for arbitrary families of elliptic curves via the Poincar\'e bundle. It is again the Poincar\'e bundle which allows us to relate the syntomic realization to Katz' two variable $p$-adic Eisenstein measure: Here we build on \cite{EisensteinPoincare}, where we have constructed Katz' Eisenstein measure using $p$-adic theta functions associated to the Poincar\'e bundle.\par 

\section*{Acknowledgement}
The results presented in this paper are part of my Ph.D. thesis \cite{PhD}. It is a pleasure to thank my advisor Guido Kings for his guidance during the last years. Further, I would like to thank Shinichi Kobayashi for all the valuable suggestions on my PhD thesis. The author would also like to thank the collaborative research centre SFB 1085 ``Higher Invariants'' by the Deutsche Forschungsgemeinschaft for its support.

\section{Rigid syntomic cohomology}
Syntomic cohomology can be seen as the $p$-adic analogue of Deligne--Beilinson cohomology. Indeed, in the case of good reduction Bannai has proven that syntomic cohomology can be seen as absolute $p$-adic Hodge cohomology \cite{bannai_absHodge}. The work of Deglise and Nizio\l{} generalizes this to arbitrary smooth proper schemes over a discretely valued field of mixed characteristic\cite{deglise_niziol}. The approach of Deglise--Nizio\l{} allows further the construction of a ring spectrum in the motivic homotopy category of Morel--Voevodsky representing syntomic cohomology. In their approach coefficients for syntomic cohomology can be defined abstractly as modules over this ring spectrum. Nevertheless, we will use rigid syntomic cohomology as developed by Bannai for describing the syntomic realization of the elliptic polylogarithm. Indeed, since we want an explicit description of the polylogarithm class, we need explicit complexes computing syntomic cohomology.\par 
In this section we briefly recall the definition and basic properties of rigid syntomic cohomology. We follow closely the appendix of \cite{bannai_kings}. In particular, we use their modification of the definition of smooth pair allowing overconvergent Frobenii which are not globally defined. Let $K/\Qp$ be a finite unramified extension with ring of integers $\Ocal_K$, residue field $k$ and Frobenius morphism $\sigma:K\rightarrow K$. \par 
A \emph{smooth pair} is a tuple $\Xx=(X,\bar{X})$ consisting of a smooth scheme $X$ of finite type over $\Ocal_K$ together with a smooth compactification $\bar{X}$ of $X$ with complement $D:=\bar{X}\setminus X$ a simple normal crossing divisor relative $\Spec \Ocal_K$. We denote the formal completion of $X$ w.r.t $X_k:=X\times_{\Spec\Ocal_K}\Spec k$ by $\Xcal$ and the formal completion of $\bar{X}$ w.r.t $\bar{X}_k$ by $\bar{\Xcal}$. The rigid analytic spaces associated with $\Xcal$ resp. $\bar{\Xcal}$ will be denoted by $\Xcal_K$ resp. $\bar{\Xcal}_K$. An \emph{overconvergent Frobenius} $\phi_X=(\phi,\phi_V)$ on a smooth pair $\Xx=(X,\bar{X})$ consists of: A morphism of $\Ocal_K$-formal schemes
	\[
		\phi: \Xcal\rightarrow \Xcal
	\]
	lifting the absolute Frobenius on $X_k$ and an extension of $\phi$ to a morphism of rigid analytic spaces
	\[
		\phi_V: V\rightarrow \bar{\Xcal}_K
	\]
	to some strict neighbourhood $V$ of $\Xcal_K$ in $\bar{\Xcal}_K$. A smooth pair together with an overconvergent Frobenius $\Xx=(X,\bar{X},\phi,\phi_V)$ will be called \emph{syntomic datum}.\par

For a smooth pair $\Xx=(X,\bar{X})$ let us write $X_K$ and $\bar{X}_K$ for the generic fibers and $X_K^\an$ resp. $\bar{X}^\an$ for the associated rigid analytic spaces. Then, $X_K^\an$ is a strict neighbourhood of $j:\Xcal_K\hookrightarrow \bar{\Xcal}_K$. A coherent module $M$ on $\bar{X}_K$ with integrable connection
\[
	\nabla:M\rightarrow M\otimes\Omega^1_{\bar{X}_K}(\log D)
\]
and logarithmic poles along $D$ induces an overconvergent connection $(M^\rig,\nabla^\rig)$ on $M^\rig:=j^\dagger (M|_{X_K^\an})$. The category of filtered overconvergent $F$-isocrystals on $\Xx$ serves as coefficients for rigid syntomic cohomology and may be realized as follows.
\begin{defin}
	Let the category $S(\Xx)$ of \emph{filtered overconvergent $F$-isocrystals} on $\Xx=(X,\bar{X})$ be the category consisting of $4$-tuples
	\[
		\Mcal=(M,\nabla,F^\bullet,\Phi_M)
	\]
	with: $M$ a coherent $\Ocal_{\bar{X}_K}$-module with integrable connection
	\[
	\nabla:M\rightarrow M\otimes\Omega^1_{\bar{X}_K}(\log D)
\]
with logarithmic poles along $D=\bar{X}_K\setminus X_K$. $F^\bullet$ a descending exhaustive and separating filtration on $M$ satisfying Griffith transversality:
	\[
		\nabla(F^\bullet M)\subseteq F^{\bullet-1}(M)\otimes \Omega^1_{\bar{X}_K}(\log D)
	\] 
	And a horizontal isomorphism
	\[
		\Phi_M: F_\sigma^* M^\rig\rightarrow M^\rig.
	\]
	where $F_\sigma$ is the Frobenius endofunctor on the category of overconvergent isocrystals defined in \cite{berthelot}. $\Phi_M$ will be called a \emph{Frobenius structure}. Morphisms in this category are morphisms of $\Ocal_{\bar{X}_K}$-modules respecting the additional structure.
\end{defin}

If one has a fixed overconvergent Frobenius on the smooth pair $\Xx=(\bar{X},X)$, one can realize a Frobenius structure more concretely as a horizontal isomorphism
\[
	\phi_V^*M^\rig\rightarrow M^\rig.
\] 
A morphism of pairs $\Xx=(X,\bar{x})\rightarrow \Yy=(Y,\bar{Y})$ is a morphism $f:\bar{X}\rightarrow \bar{Y}$ such that $f(X)\subseteq Y$. A morphism of pairs is called smooth, proper, etc, if $f|_X$ is smooth, proper, etc. For smooth morphisms of smooth pairs we define the higher direct image of filtered overconvergent $F$-isocrystals as follows. Let $D':=\bar{Y}\setminus Y$. The sheaf of relative logarithmic differentials is defined as the cokernel in the following short exact sequence:
\[
	\begin{tikzcd}
		0\ar[r] & f^*\Omega^1_{\bar{Y}_K}(\log D')\ar[r] & \Omega^1_{\bar{X}_K}(\log D) \ar[r] & \Omega^1_{\bar{X}_K/\bar{Y}_K,\log}\ar[r] & 0
	\end{tikzcd}
\]
and $\Omega^p_{\bar{X}_K/\bar{Y}_K,\log}:=\Lambda^p \Omega^1_{\bar{X}_K/\bar{Y}_K,\log}$. For $\Mcal=(M,\nabla,F^\bullet,\Phi_M)\in S(\Xx)$ we can define the following algebraic and rigid relative de Rham complexes
\[
	\DR^\bullet_{X/Y}(M):=M\otimes_{\Ocal_{\bar{X}}}\Omega^\bullet_{\bar{X}_K/\bar{Y}_K,\log}
\]
and
\[
	\DR^\bullet_{X/Y}(M^\rig):=M^\rig \otimes_{j^\dagger \Ocal_{\bar{\Xcal}_K}}j^\dagger \Omega^\bullet_{\bar{\Xcal}_K/\bar{\Ycal}_K}
\]
and their higher direct images
\[
	R^p f_* \DR^\bullet_{X/Y}(M),\quad R^p f_{\rig,*} \DR^\bullet_{X/Y}(M^\rig).
\]
In the special case $\Xx\xrightarrow{f}\Vv:=(\Ocal_K,\Ocal_K)$ both
\[
	\HdRabs{p}{X_K,M}:=R^p f_* \DR^\bullet_{X/K}(M),\quad \Hrigabs{p}{X_k,M^\rig}:=R^p f_{\rig,*} \DR^\bullet_{X/K}(M^\rig)
\]
are $K$-vector spaces.\par 
While $R^p f_* \DR^\bullet_{X/Y}(M)$ is equipped with the Hodge-Filtration $F^\bullet$ and the Gauss--Manin connection $\con{\mathrm{GM}}$, the rigid cohomology $R f_{\rig,*} \DR^\bullet_{X/Y}(M^\rig)$ is equipped with a Frobenius structure $\Phi$. If we write $j_Y:\Ycal_K\hookrightarrow \bar{\Ycal}_K$ for the inclusion, we have a comparison map
\[
	\Theta_{\Xx/\Yy}: j_Y^\dagger\left( R^p f_* \DR^\bullet_{X/Y}(M)|_{Y_K^\an} \right)\rightarrow R^p f_{\rig,*} \DR^\bullet_{X/Y}(M^\rig).
\]
Thus, whenever $\Theta_{\Xx/\Yy}$ is an isomorphism, we obtain a structure of a filtered overconvergent $F$-isocrystal over $\Yy$:
\[
	\Hsyn{p}{\Xx/\Yy,\Mcal}:=\left(R f_* \DR^\bullet_{X/Y}(M),\con{\mathrm{GM}},F^\bullet,\Phi  \right)\in S(\Yy).
\]
It is known that for proper maps $\pi:\Xx\rightarrow \Yy$ the comparison map $\Theta_{\Xx/\Yy}$ is always an isomorphism \cite[Prop. A.7.]{bannai_kings}.
\begin{defin}
	A filtered overconvergent $F$-isocrystal $\Mcal=(M,\nabla,F^\bullet,\Phi_M)\in S(\Xx)$ is called \emph{admissible} if:
	\begin{enumerate}
	\item The Hodge to de Rham spectral sequence
	\[
		E_1^{p,q}=H^p(\bar{X}_K,\gr^q_F \DR^\bullet_{X/K}(M))\implies \HdRabs{p+q}{X_K,M}
	\]
	degenerates at $E_1$.
	\item $\Theta_{\Xx/(\Ocal_K,\Ocal_K)}: \HdRabs{p}{X_K,M}\rightarrow \Hrigabs{p}{X_k,M^\rig}$
	is an isomorphism.
	\item The $K$-vector space $\HdRabs{p}{X_K,M}\righteq \Hrigabs{p}{X_k,M^\rig}$ with Hodge filtration coming from $H^{p}_{\mathrm{dR}}$ and Frobenius structure coming from $H^{p}_{\mathrm{rig}}$ is weakly admissible in the sense of Fontaine.
	\end{enumerate}
	Let us write $S(\Xx)^{adm}$ for the full subcategory of admissible objects.
\end{defin}
We will also need the following relative version of `admissible' from \cite[Def. 5.8.12]{solomon}:
\begin{defin}
	Let $\pi:\Xx\rightarrow \Yy$ be a smooth morphism of smooth pairs. A filtered overconvergent $F$-isocrystal $\Mcal=(M,\nabla,F^\bullet,\Phi_M)\in S(\Xx)$ is called \emph{$\pi$-admissible} if:
	\begin{enumerate}
		\item $\Theta_{\Xcal/\Ycal}$ is an isomorphism.
		\item The obtained filtered overconvergent $F$-isocrystals over $\Yy$
		\[
			\Hsyn{p}{\Xx/\Yy,\Mcal}:=\left(R^p f_* \DR^\bullet_{X/Y}(M),\con{\mathrm{GM}},F^\bullet,\Phi  \right)\in S(\Yy).
		\]
		are admissible.
	\end{enumerate}
	Let us write $S(\Xx)^{\pi-adm}$ for the full subcategory of $\pi$-admissible objects.
\end{defin}
For $\pi:\Xx\rightarrow \Yy$ a smooth morphism of smooth pairs we obtain functors
\[
	\Hsyn{p}{\Xx/\Yy,\cdot}: S(\Xx)^{\pi-adm} \rightarrow S(\Yy)^{adm}.
\]
Let us briefly recall the definition of rigid-syntomic cohomology as given by Bannai. We follow the exposition in \cite{bannai_kings}: Let $\Xx=(X,\bar{X},\phi,\phi_V)$ be a syntomic datum and $\Mcal=(M,\nabla,F^\bullet,\Phi_M)$ be a filtered overconvergent $F$-isocrystal. For a finite Zariski covering $\mathfrak{U}=(\bar{U}_i)_{i\in I}$ of $\bar{X}$ set $\bar{U}_{i_0,...,i_n,K}:=\bigcap_{0\leq j\leq n}\bar{U}_{i_j,K}$. $\mathfrak{U}$ induces a covering $(\Ucal_{i,K})_{i\in I}$ of $\Xcal_K$ obtained via the completion of $U_i\cap X$ along its special fiber. Let us write
\[
	j_{i_0,...,i_n}: \Ucal_{i_0,...,i_n,K}:=\bigcap_{0\leq j\leq n} \Ucal_{i_j,K} \hookrightarrow \bar{\Xcal}_K
\]
for the inclusion. The total complex associated with the \v{C}ech complex 
\[
	\prod_{i} \Gamma\left(\bar{U}_{i,K}, \DR_{\mathrm{dR}}^\bullet(M) \right)\rightarrow \prod_{i_0,i_1} \Gamma\left(\bar{U}_{i_0,i_1,K}, \DR_{\mathrm{dR}}^\bullet(M) \right)\rightarrow ...
\]
will be denoted by $R^\bullet_{\mathrm{dR}}(\mathfrak{U},\Mcal)$. Similarly, let us define $R^\bullet_{\mathrm{rig}}(\mathfrak{U},\Mcal)$ as the total complex associated with:
\[
	\prod_{i} \Gamma\left(\bar{\Xcal}_{K}, j^\dagger_{i}\DR_{\rig}^\bullet(M^\rig) \right)\rightarrow \prod_{i_0,i_1} \Gamma\left(\bar{\Xcal}_{K}, j^\dagger_{i_0,i_1}\DR_{\rig}^\bullet(M^\rig) \right)\rightarrow ...
\]
The Frobenius structure $\Phi_M$ together with the overconvergent Frobenius $\phi_X=(\phi,\phi_V)$ induce
\[
	\phi_{\mathfrak{U}}:K\otimes_{\sigma,K} R^\bullet_{\mathrm{rig}}(\mathfrak{U},\Mcal)\rightarrow R^\bullet_{\mathrm{rig}}(\mathfrak{U},\Mcal)
\]
and the comparison map $\Theta_{X/K}$ induces
\[
	\Theta_{\mathfrak{U}}: R^\bullet_{\mathrm{dR}}(\mathfrak{U},\Mcal)\rightarrow  R^\bullet_{\mathrm{rig}}(\mathfrak{U},\Mcal).
\]
Let
\[
	R^\bullet_{\mathrm{syn}}(\mathfrak{U},\Mcal):=\mathrm{Cone}\left( F^0R^\bullet_{\mathrm{dR}}(\mathfrak{U},\Mcal)\xrightarrow[]{(1-\phi_{\mathfrak{U}})\circ \Theta_{\mathfrak{U}}}  R^\bullet_{\mathrm{rig}}(\mathfrak{U},\Mcal) \right)[1]
\]
where $F^\bullet$ is the filtration induced by the Hodge filtration.
\begin{defin}
	The rigid syntomic cohomology of $\Xx$ with coefficients in $\Mcal$ is defined by
	\[
		\Hsynabs{n}{\Xx,\Mcal}:=\varinjlim_{\mathfrak{U}} H^n\left( R^\bullet_{\mathrm{syn}}(\mathfrak{U},\Mcal) \right)
	\]
	where the limit is taken over all finite Zariski coverings.
\end{defin}
By its very definition we have a long exact sequence
\begin{equation}\label{eq_Syn_seq}
\begin{tikzcd}
	..\ar[r] & F^0 \HdRabs{m}{X_K,M} \ar[r,"1-\phi"] & \Hrigabs{m}{X_k,M^\rig}\ar[r] & \Hsynabs{m+1}{\Xx,\Mcal}\ar[r] & ...
\end{tikzcd}
\end{equation}
Above we have defined functors
\[
	\Hsyn{p}{\Xx/\Yy,\cdot}: S(\Xx)^{\pi-adm} \rightarrow S(\Yy)^{adm}.
\]
The reason for the chosen notation for this functor is the following spectral sequence \cite[Theorem 5.9.1]{solomon}.
For $\Mcal=(M,\nabla,F^\bullet,\Phi_M)\in S(\Xx)^{\pi-adm}$ and $\pi:\Xx\rightarrow\Yy $ a smooth morphism of smooth pairs there is a Leray spectral sequence:
\[
	E^{p,q}_2=\Hsynabs{p}{\Yy,\Hsyn{p}{\Xx/\Yy,\Mcal}}\implies E^{p+q}=\Hsynabs{p+q}{\Xx,\Mcal}.
\]
Either by this spectral sequence or directly by the above long exact sequence, we deduce the following:
\begin{cor}\label{SR_Syn_corexseq}
	For $\mathscr{V}:=(\Ocal_K,\Ocal_K)$ we have the short exact sequence:
	\[
	 0\rightarrow \Hsynabs{1}{\mathscr{V},\Hsyn{m}{\Xx,\Mcal}} \rightarrow \Hsynabs{m+1}{\Xx,\Mcal} \rightarrow \Hsynabs{0}{\mathscr{V},\Hsyn{m+1}{\Xx,\Mcal}} \rightarrow 0
	\]
\end{cor}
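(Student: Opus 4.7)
The plan is to deduce the claim directly from the long exact sequence \eqref{eq_Syn_seq}, applied twice: once to $\Xx$ with coefficients $\Mcal$, and once to $\mathscr{V}$ with coefficients the filtered overconvergent $F$-isocrystals $\Hsyn{m}{\Xx,\Mcal}$ and $\Hsyn{m+1}{\Xx,\Mcal}$ obtained by the preceding functoriality discussion.

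First, I would chop the long exact sequence \eqref{eq_Syn_seq} into short exact pieces. Writing $\phi$ for the Frobenius on rigid cohomology, we obtain
\[
 0 \rightarrow \mathrm{coker}\bigl(1-\phi\colon F^0\HdRabs{m}{X_K,M}\to\Hrigabs{m}{X_k,M^\rig}\bigr) \rightarrow \Hsynabs{m+1}{\Xx,\Mcal} \rightarrow \ker\bigl(1-\phi\colon F^0\HdRabs{m+1}{X_K,M}\to\Hrigabs{m+1}{X_k,M^\rig}\bigr) \rightarrow 0.
\]

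Next, I would unravel the meaning of $\Hsynabs{i}{\mathscr{V},\mathcal{N}}$ for a filtered overconvergent $F$-isocrystal $\mathcal{N}=(N,\nabla,F^\bullet,\Phi_N)$ over $\mathscr{V}=(\Ocal_K,\Ocal_K)$. Since $\bar{X}_K=X_K=\Spec K$ in this case, the de Rham and rigid complexes degenerate to the single-term complex $N$, so applying the definition of $R^\bullet_{\mathrm{syn}}$ gives
\[
 \Hsynabs{0}{\mathscr{V},\mathcal{N}} = \ker\bigl(1-\phi_N\colon F^0 N\to N\bigr), \qquad \Hsynabs{1}{\mathscr{V},\mathcal{N}} = \mathrm{coker}\bigl(1-\phi_N\colon F^0 N\to N\bigr),
\]
and higher cohomology vanishes.

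Now I would specialize this computation to $\mathcal{N}=\Hsyn{m}{\Xx,\Mcal}$ and $\mathcal{N}=\Hsyn{m+1}{\Xx,\Mcal}$. By construction of these objects as admissible filtered overconvergent $F$-isocrystals over $\mathscr{V}$, the underlying $K$-vector space is $\HdRabs{m}{X_K,M}\cong\Hrigabs{m}{X_k,M^\rig}$ (using condition (b) in the definition of admissibility for the pair $\Xx\to\Vv$, which holds because $\Mcal\in S(\Xx)^{\pi-adm}$), with Hodge filtration $F^\bullet$ and Frobenius $\phi$ as appearing in the long exact sequence. Substituting these identifications into the displayed kernel/cokernel matches them with $\Hsynabs{0}{\mathscr{V},\Hsyn{m+1}{\Xx,\Mcal}}$ and $\Hsynabs{1}{\mathscr{V},\Hsyn{m}{\Xx,\Mcal}}$ respectively, which yields the asserted short exact sequence.

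The only mild subtlety, and really the one point to verify carefully, is that the Frobenius and the filtration on the cohomology groups $\HdRabs{m}{X_K,M}$ used inside the long exact sequence \eqref{eq_Syn_seq} are literally the same as those packaged into $\Hsyn{m}{\Xx,\Mcal}\in S(\mathscr{V})^{adm}$; this is automatic from the construction of $\Hsyn{m}{\Xx/\Yy,\cdot}$ recalled above, once one takes $\Yy=\mathscr{V}$. Alternatively, one can obtain the sequence as the five-term exact sequence of the Leray spectral sequence from \cite[Theorem 5.9.1]{solomon}, using the vanishing $\Hsynabs{p}{\mathscr{V},-}=0$ for $p\geq 2$ established above.
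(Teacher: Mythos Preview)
Your argument is correct and matches the paper's approach: the paper simply states that the corollary follows ``either by this spectral sequence or directly by the above long exact sequence,'' and you have spelled out precisely the second route (with the first mentioned as an alternative at the end). The extra details you supply---computing $\Hsynabs{i}{\mathscr{V},\mathcal{N}}$ as a kernel/cokernel of $1-\phi$ and matching these with the pieces of the long exact sequence---are exactly what is implicit in the paper's one-line justification.
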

\begin{defin}
	The \emph{boundary map}
	\[
		\delta:\Hsynabs{m}{\Xx,\Mcal}\rightarrow \HdRabs{m}{X_K,M}
	\]
	is defined as the composition
	\[
		\Hsynabs{m}{\Xx,\Mcal}\rightarrow \Hsynabs{0}{\mathscr{V},\Hsyn{m}{\Xx,\Mcal}}
	\]
	with the inclusion
	\[
		\Hsynabs{0}{\mathscr{V},\Hsyn{m}{\Xx,\Mcal}}=\ker\left( F^0\HdRabs{m}{X_K,M}\xrightarrow[]{1-\phi}\Hrigabs{m}{X_k,M^\rig} \right)\subseteq \HdRabs{m}{X_K,M}
	\]
\end{defin}
In general, the category of filtered overconvergent $F$-isocrystals $S(\Xx)$ is not Abelian. As in \cite[Rem 1.15]{bannai} we will regard the category $S(\Xx)$ as an exact category with the class of exact sequences given by
\[
	0\rightarrow M'\rightarrow M\rightarrow M''\rightarrow 0
\]
such that the underlying sequence of $\Ocal_{\bar{X}_K}$-modules is exact and the morphisms in the sequence are strictly compatible with the filtrations. The Tate objects $K(n)\in S(\Xx)$ are defined as
\[
	K(n)=(\Ocal_{\bar{X}_K},\dd\colon \mathcal{O}_{\bar{X}_K}\rightarrow \Omega^1_{\bar{X}_K},F^\bullet,\Phi)
\]
with $F^{-j}\Ocal_{\bar{X}_K}=\Ocal_{\bar{X}_K}\subseteq F^{-j+1}\Ocal_{\bar{X}_K}=0$ and $\Phi(1)=p^{-j}$. Let us write $\VIC{X_K/K}$ for the category of vector bundles on $K_K$ with an integrable $K$-connection.
\begin{prop}[{\cite[Proposition 4.4]{bannai}}]\label{SR_propExt}
For $i=0,1$ there is a canonical isomorphism
\[
	\Ext_{S(\Xx)}^i(K(0),\Mcal)\righteq \Hsynabs{i}{\Xx,\Mcal}
\]
fitting into the commutative diagram
\[
	\begin{tikzcd}
		\Ext_{S(\Xx)}^i(K(0),\Mcal) \ar[r,"\nu"]\ar[d,"\cong"] & \Ext_{\VIC{X_K/K}}^i(K(0),M)\ar[d,"\cong"]\\
		\Hsynabs{i}{\Xx,\Mcal}\ar[r,"\delta"] & \HdRabs{i}{X_K,M}
	\end{tikzcd}
\]
where $\nu$ is the map forgetting the Hodge filtration and the Frobenius structure.
\end{prop}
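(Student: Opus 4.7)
The plan is to identify $\Ext^i_{S(\Xx)}(K(0),\Mcal)$ with the cohomology of the explicit complex $R^\bullet_{\mathrm{syn}}(\mathfrak{U},\Mcal)$, in the spirit of Carlson's description of Yoneda $\Ext$-groups in categories of filtered objects. The case $i=0$ falls directly out of the definitions; for $i=1$ I construct a natural map $\Psi\colon\Ext^1_{S(\Xx)}(K(0),\Mcal)\to \Hsynabs{1}{\Xx,\Mcal}$ by choosing Zariski-local liftings of $1\in K(0)$ to the extension that are compatible with the Hodge filtration, then measuring the failure of a global choice against the connection, the \v Cech gluing and the Frobenius structure. Bijectivity is then established by writing down an explicit Yoneda inverse, and compatibility with $\nu$ and $\delta$ is read off from the construction.

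For $i=0$, a morphism $f\colon K(0)\to\Mcal$ in $S(\Xx)$ is determined by $s:=f(1)\in\Gamma(\bar X_K,M)$, and the requirement that $f$ respect $\nabla$, $F^\bullet$ and $\Phi_M$ amounts to $\nabla s=0$, $s\in F^0 M$ and $\Phi_M(1\otimes s^{\rig})=s^{\rig}$. Under the identification of horizontal global sections of $M$ with $\HdRabs{0}{X_K,M}$, such an $s$ is exactly an element of $F^0\HdRabs{0}{X_K,M}\cap\ker(1-\phi)$, which by the long exact sequence \eqref{eq_Syn_seq} agrees with $\Hsynabs{0}{\Xx,\Mcal}$.

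For $i=1$, let $\xi\colon 0\to\Mcal\to\Ecal\to K(0)\to 0$ be exact in $S(\Xx)$. Strict compatibility with $F^\bullet$, built into the notion of exact sequence in $S(\Xx)$, produces a short exact sequence of coherent sheaves $0\to F^0 M\to F^0\Ecal\to F^0 K(0)=\Ocal_{\bar X_K}\to 0$, so on any affine covering $\mathfrak{U}=\{\bar U_i\}$ of $\bar X$ I can choose sections $s_i\in F^0\Ecal|_{\bar U_{i,K}}$ lifting the constant section $1$. The pair $\alpha(\xi):=(\nabla s_i,\,s_i-s_j)$ is a $1$-cocycle in $F^0 R^\bullet_{\mathrm{dR}}(\mathfrak{U},\Mcal)$: indeed $\nabla s_i\in M\otimes\Omega^1_{\bar X_K}(\log D)$ because $\nabla$ kills the image $1\in K(0)$, this differential lies in $F^0$ by Griffith transversality, $s_i-s_j\in F^0 M$ by construction, and the cocycle identities are immediate. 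On the rigid side, $\Phi_{K(0)}(1)=1$ forces $t_i:=\Phi_{\Ecal}(1\otimes s_i^{\rig})-s_i^{\rig}$ to lie in $M^{\rig}|_{\Ucal_{i,K}}$, giving a $0$-cochain $\beta(\xi)\in R^0_{\mathrm{rig}}(\mathfrak{U},\Mcal)$, and a direct calculation shows that $\alpha(\xi)$ and $\beta(\xi)$ satisfy the cocycle condition $(1-\phi_{\mathfrak{U}})\Theta_{\mathfrak{U}}(\alpha(\xi))=\dd\beta(\xi)$ for the mapping cone defining $R^\bullet_{\mathrm{syn}}$. Replacing $s_i$ by $s_i+m_i$ with $m_i\in F^0 M|_{\bar U_{i,K}}$ modifies the pair by the syntomic coboundary of $(m_i)$, and equivalent extensions produce the same cocycle, yielding a well-defined map $\Psi$.

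The inverse is given by the Yoneda construction: from a syntomic $1$-cocycle $\alpha=(\omega_i,m_{ij})$, $\beta=(t_i)$, glue copies of $\Ocal_{\bar U_{i,K}}\oplus M|_{\bar U_{i,K}}$ along the $1$-cocycle $m_{ij}$, define the logarithmic connection by $\nabla\mathpzc{e}_i:=\omega_i$ on the distinguished lift $\mathpzc{e}_i$ of $1$, place $\mathpzc{e}_i$ in $F^0$ (consistent because $\omega_i\in F^{-1}M\otimes\Omega^1$), and use $t_i$ to define the Frobenius structure on $\mathpzc{e}_i^{\rig}$. The syntomic cocycle conditions are exactly what is needed for these data to descend to well-defined structures in $S(\Xx)$, and the two constructions are mutually inverse by inspection. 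Compatibility with $\nu$ and $\delta$ is then automatic: $\delta\Psi([\xi])\in\HdRabs{1}{X_K,M}$ is represented by $(\nabla s_i,s_i-s_j)$, which is precisely the image of $[\xi]$ under the classical Yoneda isomorphism $\Ext^1_{\VIC{X_K/K}}(K(0),M)\cong\HdRabs{1}{X_K,M}$. The main technical obstacle is the careful sign bookkeeping needed to match $(\alpha(\xi),\beta(\xi))$ against the cocycle convention implicit in $\mathrm{Cone}(\cdot)[1]$, together with the verification that the Yoneda inverse really lands in $S(\Xx)$; the remaining points are routine diagram chases in the exact structure of $S(\Xx)$.
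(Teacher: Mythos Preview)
The paper does not supply its own proof of this proposition: it is quoted verbatim from \cite[Proposition~4.4]{bannai} and used as a black box. So there is nothing in the present paper to compare your argument against.

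That said, your sketch is the standard Carlson-type identification of Yoneda $\Ext^1$ in a category of filtered objects with Frobenius structure, and it is essentially what Bannai's original proof does. The key ingredients you invoke are all correct: strict compatibility of filtrations (built into the exact structure on $S(\Xx)$) is what guarantees local lifts $s_i\in F^0\Ecal$ of $1$; Griffiths transversality places $\nabla s_i$ in the right filtration step of the de~Rham complex; and $\Phi_{K(0)}(1)=1$ forces $\Phi_{\Ecal}(1\otimes s_i^{\rig})-s_i^{\rig}$ to land in $M^{\rig}$. The only points requiring genuine care, which you flag yourself, are the sign conventions in the shifted mapping cone and the verification that the reconstructed extension from a syntomic $1$-cocycle really defines an object of $S(\Xx)$ (in particular that the glued Frobenius structure is a horizontal \emph{isomorphism}, not just a map). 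These are bookkeeping rather than conceptual issues, and your outline handles them adequately for a sketch.
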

For an admissible filtered overconvergent $F$-isocrystal $\Mcal=(M,\nabla,F^\bullet,\Phi_M)$ on $(X,\bar{X})$ let us write
\[
	\Phi\colon \Gamma(\bar{\Xcal}_K,M^\rig)\rightarrow \Gamma(\bar{\Xcal}_K,M^\rig), \quad \alpha \mapsto \Phi_M(\phi^*\alpha)
\]
for the map induced by the Frobenius structure. Last but not least, let us recall the following useful description of classes in $\Hsynabs{1}{\Xx,\Mcal}$ if $F^0M=0$:
\begin{prop}[{\cite[Proposition A.16]{bannai_kings}}]\label{prop_BK}
 Let $\Mcal=(M,\nabla,F,\Phi)$ be an admissible filtered overconvergent $F$-isocrystal with $F^0M=0$. A cohomology class
  \[
  	[\alpha]\in\Hsynabs{1}{\Xx,\Mcal}
  \]
  is given uniquely by a pair $(\alpha,\xi)$ with
  \[
  	\alpha\in\Gamma(\bar{\Xcal}_K,M^\rig),\quad \xi\in \Gamma(\bar{X}_K, F^{-1}M\otimes \Omega^1_{\bar{X}_K}(\log D))
  \]
  satisfying the conditions:
  \[
  	\nabla(\alpha)=(1-\Phi)(\xi),\quad \nabla(\xi)=0
  \]
\end{prop}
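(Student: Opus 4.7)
My approach is via the Ext interpretation of Proposition~\ref{SR_propExt}: since $\Hsynabs{1}{\Xx,\Mcal} \cong \Ext^1_{S(\Xx)}(K(0), \Mcal)$, it suffices to exhibit a bijection between pairs $(\alpha,\xi)$ satisfying the displayed conditions and equivalence classes of extensions $0 \to \Mcal \to \Ecal \to K(0) \to 0$ in $S(\Xx)$. The hypothesis $F^0 M = 0$ is decisive: applying $F^0$ to such a sequence produces $0 \to 0 \to F^0 E \to \Ocal_{\bar{X}_K} \to 0$, providing a canonical $\Ocal_{\bar{X}_K}$-linear section $s\colon \Ocal_{\bar{X}_K} \to E$ with image $F^0 E$.

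From an extension I first construct the pair. Set $\xi := \nabla_E(s(1))$ and $\alpha := s(1) - \Phi_E(s(1))$. Both project to zero in the respective rank-one quotient (since $\nabla(1)=0$ and the Frobenius on $K(0)$ fixes $1$), so $\xi \in \Gamma(\bar{X}_K, M \otimes \Omega^1_{\bar{X}_K}(\log D))$ and $\alpha \in \Gamma(\bar{\Xcal}_K, M^\rig)$; Griffith transversality applied to $s(1) \in F^0 E$ refines $\xi$ to lie in $F^{-1}M \otimes \Omega^1_{\bar{X}_K}(\log D)$. Integrability $\nabla_E^2 = 0$ yields $\nabla(\xi) = 0$, while horizontality of the Frobenius structure $\Phi_E$ gives
\[
 \nabla(\alpha) = \nabla(s(1)) - \nabla(\Phi_E(s(1))) = \xi - \Phi_E(\xi) = (1-\Phi)(\xi).
\]

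For the reverse direction, given $(\alpha,\xi)$ satisfying the two conditions, I construct an extension on the underlying module $E := M \oplus \Ocal_{\bar{X}_K}$, with filtration $F^k E := F^k M \oplus F^k \Ocal_{\bar{X}_K}$, connection $\nabla_E(m,f) := (\nabla m + f\xi,\, df)$, and Frobenius structure $\Phi_E(m,f) := (\Phi_M m - f\alpha,\, f)$ on $M^\rig \oplus \Ocal^\rig_{\bar{\Xcal}_K}$. A direct check translates integrability into $\nabla\xi = 0$, Griffith transversality into $\xi \in F^{-1}M \otimes \Omega^1_{\bar{X}_K}(\log D)$, and horizontality of $\Phi_E$ into $\nabla\alpha = (1-\Phi)\xi$, so the object lies in $S(\Xx)$.

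The two constructions are mutually inverse by inspection, and every equivalence of extensions is automatically compatible with the canonical sections (any two sections of $F^0 E \to \Ocal_{\bar{X}_K}$ differ by a section of $F^0 M = 0$), so the induced pair $(\alpha,\xi)$ depends only on the equivalence class. The main subtle step will be the careful translation of the axioms defining $S(\Xx)$ into the three identities for $(\alpha,\xi)$; horizontality of $\Phi_E$ in the rigid realization is precisely where the relation $\nabla(\alpha) = (1-\Phi)(\xi)$ is needed, and $F^0 M = 0$ is what ensures that the section $s$ — and therefore the pair $(\alpha,\xi)$ — is unambiguously determined by the extension.
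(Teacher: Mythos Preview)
The paper does not prove this proposition; it is quoted verbatim from \cite[Proposition~A.16]{bannai_kings}, so there is no in-paper argument to compare against.

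Your approach via Proposition~\ref{SR_propExt} is sound and yields the statement. A more direct route, and the one implicit in the way the paper sets things up, is to read the pair $(\alpha,\xi)$ straight off the mapping-cone definition of $R^\bullet_{\mathrm{syn}}$: with the usual Hodge filtration on the de Rham complex one has $F^0\!\DR^0_{\mathrm{dR}}(M)=F^0M=0$ and $F^0\!\DR^1_{\mathrm{dR}}(M)=F^{-1}M\otimes\Omega^1_{\bar{X}_K}(\log D)$, so a $1$-cocycle in the cone is exactly a pair $(\xi,\alpha)$ with $\nabla\xi=0$ and $\nabla\alpha=(1-\Phi)\xi$, while the space of $1$-coboundaries vanishes because $F^0M=0$. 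This bypasses the extension-theoretic translation and the appeal to Proposition~\ref{SR_propExt}; on the other hand, your argument makes transparent why the pair encodes precisely the Hodge and Frobenius splittings of the corresponding extension, which is conceptually useful when one later wants to identify the de Rham class (as in Corollary~\ref{SR_corDRclass}). One small point of care in your write-up: when you form $\Phi_E(s(1))$ and later $\Phi_E(m,f)$, make sure you are consistently using the $\sigma$-linear endomorphism $\alpha\mapsto\Phi_M(\phi^*\alpha)$ on sections rather than the bare isomorphism $\phi_V^*M^\rig\to M^\rig$; the paper fixes this convention just before the proposition, and your horizontality computation is correct under it.
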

In particular, this result will apply to the polylogarithm class. Indeed, we will see that the differential equation of overconvergent functions describing the rigid syntomic polylogarithm class is just a restatement of the abstract differential equation
\[
  	\nabla(\alpha)=(1-\Phi)(\xi).
 \]
 \begin{cor}[{\cite[Corollary A.17]{bannai_kings}\label{SR_corDRclass}}]
 	Suppose $(\alpha,\xi)=[\alpha,\xi]\in \Hsynabs{1}{\Xx,\Mcal}$ is as in the previous proposition. Then, the image of $[\alpha,\xi]$ under
 	\[
 		\Hsynabs{1}{\Xx,\Mcal}\rightarrow \HdRabs{1}{\Xx,M}
 	\]
 	is given by $[\xi]$.
 \end{cor}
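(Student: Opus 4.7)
The plan is to trace through the cone construction defining rigid syntomic cohomology. By Proposition~\ref{prop_BK} the cocycle data is already identified with a pair $(\alpha,\xi)$ satisfying $\nabla\alpha=(1-\Phi)\xi$ and $\nabla\xi=0$, so the only remaining task is to identify the map $\Hsynabs{1}{\Xx,\Mcal}\to\HdRabs{1}{X_K,M}$ in the statement concretely on representatives.

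First I would recall that by definition
\[
R^\bullet_{\mathrm{syn}}(\mathfrak{U},\Mcal)=\mathrm{Cone}\left(F^0 R^\bullet_{\mathrm{dR}}(\mathfrak{U},\Mcal)\xrightarrow{(1-\phi_{\mathfrak{U}})\circ\Theta_{\mathfrak{U}}} R^\bullet_{\mathrm{rig}}(\mathfrak{U},\Mcal)\right)[1].
\]
Unwinding the cone, a degree one cocycle is precisely a pair consisting of a closed element $\xi$ in degree one of $F^0 R^\bullet_{\mathrm{dR}}$, that is $\xi\in\Gamma(\bar{X}_K,F^{-1}M\otimes\Omega^1_{\bar{X}_K}(\log D))$ with $\nabla\xi=0$, together with $\alpha\in\Gamma(\bar{\Xcal}_K,M^{\mathrm{rig}})$ satisfying the cone closure $\nabla\alpha=(1-\Phi)\xi$. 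This is exactly the description of Proposition~\ref{prop_BK}.

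Next I would observe that the canonical distinguished triangle attached to the cone yields on cohomology precisely the long exact sequence~\eqref{eq_Syn_seq}, and that the connecting arrow
\[
\Hsynabs{1}{\Xx,\Mcal}\longrightarrow F^0\HdRabs{1}{X_K,M}\hookrightarrow\HdRabs{1}{X_K,M}
\]
is represented on complexes by the projection of the cone onto its first factor. By Corollary~\ref{SR_Syn_corexseq} this connecting map is the boundary map $\delta$, and Proposition~\ref{SR_propExt} identifies it with the map appearing in the statement. Applied to the explicit cocycle above, the cone projection sends $(\xi,\alpha)\mapsto\xi$, so the image class is $[\xi]$.

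I do not expect any serious obstacle here: the corollary is a purely formal consequence of the cone construction once Proposition~\ref{prop_BK} has been set up. The only delicate point is bookkeeping the sign and shift conventions between the cone $\mathrm{Cone}(\cdot)[1]$ and the long exact sequence~\eqref{eq_Syn_seq}, and verifying that the closure condition $\nabla\alpha=(1-\Phi)\xi$ agrees with the differential on the cone in this chosen convention; both are already implicit in the discussion preceding Proposition~\ref{prop_BK}.
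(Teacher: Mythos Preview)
Your proposal is correct and is exactly the standard unwinding of the mapping-cone definition of $R^\bullet_{\mathrm{syn}}$; the paper itself gives no proof of this corollary but simply cites it from \cite[Corollary~A.17]{bannai_kings}, so there is nothing further to compare.
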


\section{The syntomic polylogarithm class}
The syntomic elliptic polylogarithm is a pro-system of cohomology classes in syntomic cohomology with coefficients in certain filtered overconvergent $F$-isocrystals,  \emph{called the syntomic logarithm sheaves}. The aim of this section is to introduce the syntomic logarithm sheaves and to define the syntomic polylogarithm class. Let us start with the definition of the syntomic logarithm sheaves. As before, let $K/\Qp$ be a finite unramified extension. Let $\pi:\Ee=(E,\bar{E},\phi_E)\rightarrow \Ss=(S,\bar{S},\phi_S)$ be a morphism of syntomic data with $\pi:E\rightarrow S$ an elliptic curve over some affine scheme $S$.\par
Since $\pi$ is proper, $\Hcal:=\Hsyn{1}{\Ee/\Ss,K(1)}\in S(\Ss)^{ad}$ and $\Hcal^\vee\in S(\Ss)^{ad}$ are well defined admissible filtered overconvergent $F$-isocrystals. Let us define $\Hcal_E:=\pi^*\Hcal$. Applying the Leray spectral sequence for syntomic cohomology gives a split short exact sequence:
\[
	\begin{tikzcd}
		0\ar[r] & \Ext^1_{S(\Ss)}(K(0),\Hcal)\ar[r,"\pi^*"] & \Ext^1_{S(\Ee)}(K(0),\Hcal_E)\ar[l,swap,bend right,"e^*"] \ar[r,"\tilde{\delta}"] & \Hom_{S(\Ss)}(\Hcal,\Hcal)\ar[r] & 0
	\end{tikzcd}
\]
Let $[\Logsyn{1}]\in \Ext^1_{S(\Ee)}(K(0),\Hcal_E)$ be the unique extension class satisfying $e^*[\Logsyn{1}]=0$ and $\tilde{\delta}[\Logsyn{1}]=\id_{\Hcal}$. A representative of this extension class does not have any non-trivial automorphisms, thus it is uniquely determined by its extension class up to unique isomorphism. An overconvergent $F$-isocrystal 
\[
	\Logsyn{1}=(\LogsyndR{1},\nabla,F^\bullet_{\Logsyn{}},\Phi_{\Logsyn{}})
\]
representing the extension class $[\Logsyn{1}]$ will be called \emph{the first syntomic logarithm sheaf}. The higher logarithm sheaves are defined by taking tensor symmetric powers $$\Logsyn{n}:=\TSym^n\Logsyn{1}.$$ Here, recall that the $n$-th tensor symmetric power $\TSym^n \LogsyndR{1}$ is defined by
\[
	\TSym^n \LogsyndR{1}:= \left( \LogsyndR{1}\otimes...\otimes \LogsyndR{1}\right)^{S_n}
\]
where the symmetric group $S_n$ acts by permutation. The connection, the filtration and the Frobenius structure on $\LogsyndR{1}$ induce an overconvergent $F$-isocrystal structure on $\LogsyndR{n}:=\TSym^n \LogsyndR{1}$.

The canonical map $\Logsyn{1} \twoheadrightarrow K(0)$ induces transition maps $\Logsyn{n+1}\twoheadrightarrow \Logsyn{n}$. Since the extension class of the first logarithm sheaf pulls back to zero along $e$,  we get a canonical isomorphism
\[
	e^*\Logsyn{1}\righteq K(0)\oplus \Hcal.
\]
By passing to tensor symmetric powers, we obtain
\[
	e^*\Logsyn{n}\righteq \bigoplus_{k=0}^n\TSym^k \Hcal.
\]

In particular, $1\in K(0)=\TSym^0 \Hcal$ gives us a canonical horizontal section $\idM^{(n)}$ in $e^*\Logsyn{n}$. Although the section $\idM^{(n)}\in\Gamma(S,\LogsyndR{1})$ is uniquely determined by the datum of the logarithm sheaves it will be convenient to include it into the structure defining the $n$-th syntomic logarithm sheaf. So sometimes we will write
\[
	\Logsyn{n}=(\LogsyndR{n},\nabla,F^\bullet_{\Logsyn{}},\Phi_{\Logsyn{}},\idM^{(n)}).
\]
\par
Since $\pi$ is proper, we have
\[
	\Hsyn{i}{\Ee/\Ss,\Logsyn{n}}\in S(\Ss).
\]
One can compute the syntomic cohomology of the logarithm sheaves along the same lines as in the de Rham realization \cite[\S 1.2]{rene}:
\begin{prop}\label{SR_Log_propCohom} Let $\pi:\Ee\rightarrow\Ss$ be as before.
The transition maps 
\[
	\Hsyn{i}{\Ee/\Ss,\Logsyn{n+1}}\rightarrow \Hsyn{i}{\Ee/\Ss,\Logsyn{n}}
\]
are zero for $i=0,1$ and are isomorphisms for $i=2$. In particular, the trace isomorphism for $i=2$ gives canonical isomorphisms
\[
	\Hsyn{2}{\Ee/\Ss,\Logsyn{n}}\righteq ... \righteq \Hsyn{2}{\Ee/\Ss,\Logsyn{0}}\righteq K(0)
\]
\end{prop}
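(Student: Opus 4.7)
The plan is to imitate the de Rham computation of \cite{rene}[\S 1.2] within the rigid syntomic framework and proceed by induction on $n$. The geometric input is the short exact sequence of filtered overconvergent $F$-isocrystals on $\Ee$
\[
0 \to \pi^*\TSym^{n+1}\Hcal \to \Logsyn{n+1} \to \Logsyn{n} \to 0
\]
obtained from the defining extension $0 \to \pi^*\Hcal \to \Logsyn{1} \to K(0) \to 0$ by passing to the canonical filtration on $\Logsyn{n+1}=\TSym^{n+1}\Logsyn{1}$, whose top graded piece is $\pi^*\TSym^{n+1}\Hcal$ and whose remaining quotient is $\Logsyn{n}$.

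Applying the relative direct image $\Hsyn{\bullet}{\Ee/\Ss,-}$ and invoking the projection formula $\Hsyn{i}{\Ee/\Ss,\pi^*\Fcal}\cong \Fcal\otimes\Hsyn{i}{\Ee/\Ss,K(0)}$ produces the long exact sequence in $S(\Ss)$
\[
\cdots \to \Hsyn{i}{\Ee/\Ss,\Logsyn{n}}\xrightarrow{\partial} \TSym^{n+1}\Hcal\otimes \Hsyn{i+1}{\Ee/\Ss,K(0)} \to \Hsyn{i+1}{\Ee/\Ss,\Logsyn{n+1}} \to \Hsyn{i+1}{\Ee/\Ss,\Logsyn{n}} \to \cdots
\]
For the elliptic curve $\pi:E\to S$, relative Poincar\'e duality identifies $\Hsyn{\bullet}{\Ee/\Ss,K(0)}$ with $K(0)$, $\Hcal^\vee$, and $K(-1)$ in degrees $0,1,2$; the self-duality of $\Hcal$ is encoded in the cup product pairing $\Hcal\otimes\Hcal\to K$ combined with the trace isomorphism.

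For the base case $n=0$ the connecting morphism $\partial$ is cup product with the extension class of $\Logsyn{1}$, which by the defining property $\tilde{\delta}[\Logsyn{1}]=\id_\Hcal$ is ``the identity''. In degree $0$ this makes $\partial$ injective, so the transition $\Hsyn{0}{\Ee/\Ss,\Logsyn{1}}\to\Hsyn{0}{\Ee/\Ss,K(0)}$ vanishes; in degree $1$ the map $\partial$ becomes the Poincar\'e duality isomorphism (tensored with $\Hcal$), which simultaneously forces the transition on $\Hsyn{1}$ to be zero and makes the transition on $\Hsyn{2}$ into the trace isomorphism onto $K(-1)$. The inductive step runs identically: the extension class of the SES with general $n$ is obtained from the $n=0$ class by the canonical filtration on $\TSym^{n+1}\Logsyn{1}$, so after tensoring with $\TSym^{n+1}\Hcal$ the connecting homomorphism $\partial$ is again given by cup with $\id_\Hcal$ and is therefore injective in degree $0$ and an isomorphism in degree $1$. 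The trace identification in the final line then propagates through the chain of iso transition maps on $\Hsyn{2}$.

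The main obstacle is verifying that $\partial$ truly is cup product with the extension class in this rigid syntomic setup (which requires compatibility of the LES with the Leray spectral sequence attached to $\pi$) and that the resulting pairing coincides with relative Poincar\'e duality; once this is in place, the bookkeeping in the long exact sequence is routine. Everything else, including the description of $\Hsyn{\bullet}{\Ee/\Ss,K(0)}$, already follows from the existence of the functor $\Hsyn{\bullet}{\Ee/\Ss,-}\colon S(\Ee)^{\pi-adm}\to S(\Ss)^{adm}$ recalled above.
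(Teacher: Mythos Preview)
Your argument is essentially the de Rham argument carried out a second time inside the syntomic category, and modulo the compatibility checks you yourself flag (existence of a long exact sequence for $\Hsyn{\bullet}{\Ee/\Ss,-}$ in $S(\Ss)$, identification of $\partial$ with cup product by the extension class, comparison with Poincar\'e duality) it would go through. But the paper short-circuits all of this. By definition
\[
\Hsyn{i}{\Ee/\Ss,\Logsyn{n}}=\bigl(R^i\pi_*\DR^\bullet_{E/S}(\Logn),\,\con{\mathrm{GM}},\,F^\bullet,\,\Phi\bigr),
\]
so the underlying $\Ocal_{\bar S_K}$-module of the syntomic cohomology sheaf is literally the relative de Rham cohomology sheaf, and the transition map $\Hsyn{i}{\Ee/\Ss,\Logsyn{n+1}}\to\Hsyn{i}{\Ee/\Ss,\Logsyn{n}}$ is, as a map of $\Ocal$-modules, the de Rham transition map. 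A morphism in $S(\Ss)$ is zero (resp.\ an isomorphism) if and only if the underlying morphism of coherent modules is; hence the statement follows immediately from the already-established de Rham result \cite[Proposition~2.2]{deRham}, with no need to rerun the long exact sequence analysis or to verify Poincar\'e duality in the syntomic setting.

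In short: your route is not wrong, but it reproves what can simply be quoted. The paper's observation that ``syntomic $=$ de Rham $+$ extra structure'' reduces the proposition to a one-line citation, whereas your approach requires establishing the formal apparatus (LES, projection formula, cup-product description of $\partial$) for $\Hsyn{\bullet}{\Ee/\Ss,-}$ that the paper never sets up and does not need.
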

\begin{proof}
Since the underlying $F$-isocrystals are the relative de Rham cohomology sheaves plus extra structure, the result follows from the corresponding result for de Rham cohomology \cite[Proposition 2.2]{deRham}.
\end{proof}
For the definition of the ($D$-variant) of the elliptic polylogarithm in rigid syntomic cohomology consider the following diagram of smooth pairs: For $D>1$ define $U_D:=E\setminus E[D]$. 
\[
\begin{tikzcd}
	\Uu_D:=(U_D,\bar{E})\ar[r,hook,"j_D"]\ar[dr,swap,"\pi_{\Uu_D}"] & \Ee:=(E,\bar{E})\ar[d,"\pi_\Ee"] & \Ee[D]:=(E[D],\overline{E[D]})\ar[l,swap,hook,"i_D"]\ar[dl,"\pi_{\Ee[D]}"]\\
	& \Ss=(M,\bar{M}) &
\end{tikzcd}
\]
Combining the localization sequence with the vanishing results from \cref{SR_Log_propCohom} gives the exact sequence:
	\[
	\begin{tikzcd}
		0 \ar[r] & \varprojlim_n \Hsynabs{1}{\Uu_D,\Logsyn{n}(1)}\ar[r,"\Res"]  & \varprojlim_n \Hsynabs{0}{\Ee[D],\Logsyn{n}(1)|_{\Ee[D]}}\ar[r,"\mathrm{aug}"] & K.
	\end{tikzcd}
	\]
Let us write $1_e\in \Gamma(E[D],\Ocal_{E[D]})$ for the horizontal section supported on $[e]$ and $1_{E[D]}:=1\in\Gamma(E[D],\Ocal_{E[D]})$ for the section corresponding to the identity element of the ring $\Gamma(E[D],\Ocal_{E[D]})$. Since both sections are horizontal, we will view them as elements of $\HdRabs{0}{E[D]}$. \eqref{eq_Syn_seq} gives us the exact sequence
\[
\begin{tikzcd}[row sep=tiny]
	0\ar[r] & \varprojlim_n \Hsynabs{0}{\Ee[D],\Logsyn{n}}\ar[r] & \varprojlim_n F^0\HdRabs{0}{E[D]_K,\Logn}\ar[r,"1-\phi"] & \phantom{A}\\
	& \phantom{\varprojlim_n \Hsynabs{0}{\Ee[D],\Logsyn{n}}}\ar[r,"1-\phi"] & \varprojlim_n \Hrigabs{0}{E_k[D],(\Logn)^\rig}. &
\end{tikzcd}
\]
This sequence allows us to view $D^21_{e}-1_{E[D]}\in \ker(1-\phi)$ as element of $\varprojlim_n \Hsynabs{0}{\Ee[D],\Logsyn{n}}$.
\begin{defin}
	The $D$-variant of the syntomic polylogarithm is the unique pro-system
	\[
		\polsyn\in\varprojlim_n \Hsynabs{1}{\Uu_D,\Logsyn{n}(1)}
	\]
	mapping to $D^2 1_{e}-1_{E[D]}$ under the residue map in the localization sequence.
\end{defin}

\section{The de Rham realization of the elliptic polylogarithm}
By forgetting the filtration and the Frobenius structure of the syntomic logarithm class $[\Logsyn{1}]$ we obtain an extension class in the category $\VIC{E/K}$ of vector bundles with integrable $K$-connections
\[
	[\LogsyndR{1},\nabla]\in \Ext^1_{\VIC{E/K}}(\Ocal_E,\Hcal_E).
\]
The extension class $[\LogsyndR{1},\nabla]$ maps to zero under $e^*$ respectively to the identity in the short exact sequence
\[
\begin{tikzcd}[column sep=small]
	0\ar[r] & \Ext^1_{\VIC{S/\KK}}(\Ocal_S,\Hcal)\ar[r,shift left,"\pi^*"] & \ar[l,shift left,"e^*"]\Ext^1_{\VIC{E/\KK}}(\Ocal_E,\Hcal_E) \ar[r] & \Hom_{\VIC{S/\KK}}(\Hcal,\Hcal)\ar[r] & 0.
\end{tikzcd}
\]
Contrary to the syntomic realization, an extension representing the first de Rham logarithm class might have non-trivial automorphisms but the additional datum of the section $\idM^{(1)}$ distinguishes a splitting of $e^*\Log{1}$ and thereby rigidifies the situation. The triple $(\Log{1},\nabla_{\Log{1}},\idM^{(1)})$ is called \emph{the first de Rham logarithm sheaf}. For more details on the de Rham logarithm sheaves we refer the reader to \cite[\S2,\S3]{deRham}. The triple $(\Log{n},\nabla,\idM^{(n)})$ obtained by taking $n$-th tensor symmetric powers will be called \emph{$n$-th de Rham logarithm sheaves}. The elliptic polylogarithm in algebraic de Rham cohomology
\[
	\polD\in\lim_n \HdR{1}{U_D/K,\Log{n}}
\]
 is then defined in complete analogy using the localization sequence in de Rham cohomology, i.e. $\polD$ is the unique pro-system mapping to the horizontal section $D^2 1_{e}-1_{E[D]}$ under the residue map in the exact sequence
\[
		\begin{tikzcd}[column sep=small]
			0\ar[r] & \varprojlim_n \HdRabs{1}{U_D/\KK, \Logn}\ar[r,hook,"\Res"] & \prod_{k=0}^\infty \HdRabs{0}{E[D]/\KK,\Sym^k\Hcal_{E[D]}}\ar[r,"\sigma"] & K\ar[r] & 0.
		\end{tikzcd}
	\] 
According to a theorem of R. Scheider, the de Rham logarithm sheaves of an elliptic curve are given by restricting the Poincar\'e bundle on the universal vectorial extension to infinitesimal neighbourhoods of the elliptic curve. Let us briefly recall his construction here and let us refer to \cite{rene} or \cite[\S4]{deRham} for details. Let $\Po$ be the Poincar\'e bundle on $E\times_S E^\vee$ and let $E^\dagger\rightarrow E^\vee$ be the universal vectorial extension of $E^\vee$. There is a universal integrable $E^\dagger$-connection $\con{\dagger}$ on the pullback $\Po^\dagger$ of $\Po$ along $E\times_S E^\dagger\rightarrow E\times_S\Ed$. The connection $\con{\dagger}$ induces an integrable $S$-connection $\con{\Ln^\dagger}$ on
\[
	\Ln^\dagger:=(\pr_E)_* \left( \Po^\dagger\Big|_{E\times_S\Inf^n_e E^\dagger}\right).
\]
Further, the canonical rigidification of the Poincar\'e bundle induces an isomorphism of $\Ocal_S$-modules
\[
	e^* \Ln^\dagger \righteq \Ocal_{\Inf^n_e E^\dagger}.
\]
In particular, there is a distinguished section $1\in \Gamma(S,e^*\Ln^\dagger)$. The following Theorem is due to Scheider. For a much shorter proof we refer to \cite[Corollary 4.6]{deRham}.
\begin{thm}[Scheider, \cite{rene}]
	There is a unique prolongation $\nabla_{\Ln^\dagger}^{abs}$ of the $S$-connection $\con{\Ln^\dagger}$ to a $K$-connection such that
	\[
		(\Ln^\dagger,\nabla_{\Ln^\dagger}^{abs}, 1)
	\]
	is an explicit model for the $n$-th de Rham logarithm sheaf.
\end{thm}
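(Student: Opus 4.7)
The plan is to identify $(\Ln^\dagger, \nabla^{abs}_{\Ln^\dagger}, 1)$ with the $n$-th de Rham logarithm sheaf by exploiting universal properties on both sides. Recall that $\Log{n} = \TSym^n \Log{1}$, where $\Log{1}$ is characterized as the unique extension in $\Ext^1_{\VIC{E/K}}(\Ocal_E, \HE)$ whose image under $\tilde{\delta}$ is $\id_\Hcal$ and whose pullback along $e$ is trivial, the splitting being rigidified by $\idM^{(1)}$. I would verify these properties for $(\Ln^\dagger, \nabla^{abs}_{\Ln^\dagger}, 1)$ after constructing the required $K$-connection.

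For the construction, I would use that the universal $E^\dagger$-connection $\nabla^\dagger$ on $\Po^\dagger$ is $K$-linear, not merely $S$-linear. Its restriction along the closed immersion $E \times_S \Inf^n_e E^\dagger \hookrightarrow E \times_S E^\dagger$ is well-defined, and pushing forward along $\pr_E$ yields an integrable $K$-connection on $\Ln^\dagger$ that visibly prolongs $\con{\Ln^\dagger}$. This defines the candidate $\nabla^{abs}_{\Ln^\dagger}$.

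Next I would verify the defining properties. The universal vectorial extension yields a canonical identification of $\mathrm{Lie}(E^\dagger)$ with $\Hcal$ (up to the appropriate duality), which equips $\Ocal_{\Inf^n_e E^\dagger}$ with a filtration whose graded pieces are $\TSym^k \Hcal$. Combined with the rigidification of $\Po^\dagger$, this produces the splitting $e^*\Ln^\dagger \cong \bigoplus_{k=0}^n \TSym^k \Hcal$ with $1$ mapping to $\idM^{(n)}$. The associated filtration on $\Ln^\dagger$ has graded pieces $\TSym^k \HE$; for $n=1$ one must verify that the resulting extension class $[\Ln^\dagger]\in\Ext^1_{\VIC{E/K}}(\Ocal_E,\HE)$ maps to $\id_\Hcal$ under $\tilde{\delta}$. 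This is the main obstacle: it amounts to tracing through the universal property of $\Po^\dagger$ (that $E^\dagger$ parametrizes rigidified degree-zero line bundles on $E$ with integrable connection) to see that infinitesimal differentiation at $0_{E^\dagger}$ in the second factor of $\Po^\dagger$ exactly encodes the identity endomorphism of $\Hcal$. Higher $n$ then follow from compatibility with $\TSym^n$ and the definition $\Log{n}=\TSym^n\Log{1}$.

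Uniqueness of the prolongation follows by induction on $n$: the difference of two prolongations is an $\Ocal_E$-linear horizontal map $\Ln^\dagger \to \Ln^\dagger \otimes \pi^*\Omega^1_{S/K}$, whose restriction to $e$ must vanish because both prolongations agree with the canonical structure on $e^*\Ln^\dagger$ rigidified by $1$. The iterated extension structure of $\Ln^\dagger$, together with the vanishing of $\Hom(\Ocal_S, \TSym^k\Hcal \otimes \Omega^1_{S/K})$ compatible with the rigidification in the inductive step, then forces the difference to be zero globally.
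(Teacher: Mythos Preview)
Your construction step contains a genuine gap. You claim that the universal $E^\dagger$-connection $\nabla^\dagger$ on $\Po^\dagger$ is already $K$-linear, so that restricting and pushing forward along $\pr_E$ produces a $K$-connection on $\Ln^\dagger$ directly. But $\nabla^\dagger$ is a connection \emph{relative to $E^\dagger$}: it is a map $\Po^\dagger \to \Po^\dagger \otimes \Omega^1_{E\times_S E^\dagger/E^\dagger}$, and $\Omega^1_{E\times_S E^\dagger/E^\dagger} \cong \pr_E^*\Omega^1_{E/S}$. Pushing forward therefore lands in $\Ln^\dagger \otimes \Omega^1_{E/S}$, which is exactly the $S$-connection $\nabla_{\Ln^\dagger}$ the paper writes down before stating the theorem. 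The statement asserts precisely that one must \emph{prolong} this relative connection to an absolute one with values in $\Omega^1_{E/K}$; this prolongation is the nontrivial content, and your proposal simply assumes it away.

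The paper does not prove the theorem here; it cites Scheider's thesis and \cite[Cor.~4.6]{deRham} for a shorter argument. In those references the absolute connection is not obtained by a direct pushforward of $\nabla^\dagger$. One route (Scheider's) builds the prolongation via the crystalline nature of $\Po^\dagger$ and the Gauss--Manin connection on $\Hcal$; the shorter route in \cite{deRham} characterizes the de Rham logarithm sheaf among unipotent bundles with relative connection and shows that the absolute extension is forced by the rigidification $\idM^{(n)}$ together with the requirement $\tilde\delta[\Log{1}]=\id_\Hcal$. Your verification of the extension class (that infinitesimal variation in the $E^\dagger$-direction encodes $\id_\Hcal$) and your uniqueness sketch are reasonable in outline, but both presuppose that a candidate $K$-connection has already been produced, and that is the step you are missing.
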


By its very definition $\Log{1}$ is an extension of $\Ocal_E$ by $\Hcal_E$. Viewing $\Ocal_E$ as sitting in filtration step $0$, there is a unique Hodge filtration $F^\bullet$ of $\Log{1}$ compatible with the Hodge filtrations of $\Hcal_E$ and $\Ocal_E$. The filtration on $\Log{n}$ is defined by passing to symmetric powers. Let us write $\Log{n}(1)$ for $\Log{n}$ with shifted filtration $F^{\bullet+1}$. Building on the result of Scheider, we have constructed in \cite[\S5]{deRham} $1$-forms with values in the logarithm sheaves
\[
	\LnD\in\Gamma(E,\Omega^1_{E/S}(E[D])\otimes_{\Ocal_E} F^0 \Ln^\dagger(1)). 
\] 
Let us briefly indicate the construction of $\LnD$ and let us refer to \cite[\S5]{deRham} for details: The starting point of our construction is the \emph{Kronecker section}, which is defined as follows: The autoduality isomorphism
\begin{equation}\label{EP_eqAutodual}
\begin{tikzcd}[row sep=tiny]
	\lambda: E \ar[r] & \Pic=:\Ed\\
	P\ar[r,mapsto] & {[\Ocal_E([-P]-[e])]}
\end{tikzcd}
\end{equation}
gives an explicit model for the rigidified Poincar\'e bundle
\begin{align*}
	(\Po,r_0):&=\left( \pr_1^* \Ocal_E([e])^{\otimes-1}\otimes\pr_2^*\Ocal_{E}([e])^{\otimes-1}\otimes \mu^*\Ocal_E([e])\otimes \pi_{E\times E}^* \om_{E/S}^{\otimes -1} ,r_0\right).
\end{align*}
Here, $\Delta=\ker \left(\mu:E\times \Ed\rightarrow E\right)$ is the anti-diagonal and $r_0$ is the rigidification $(e\times \id)^*\Po\cong \Ocal_{\Ed}$ induced by the canonical isomorphism 
\[
	e^*\Ocal_E(-[e])\righteq \om_{E/S}:=e^*\Omega^1_{E/S}.
\]
This description of the Poincar\'{e} bundle gives the following isomorphism of locally free $\Ocal_{E\times E}$-modules, i.\,e. all tensor products are taken over $\Ocal_{E\times E}$:
\begin{align}\label{ch_EP_eq5}
	\Po\otimes \Po^{\otimes -1} \cong \Po \otimes \Omega^1_{E\times E/E}([e\times E]+[E\times e]) \otimes \Ocal_{E\times E}( - \Delta)
\end{align}
The line bundle $\Ocal_{E\times E}( - \Delta)$ can be identified with the ideal sheaf $\Jcal_\Delta$ of the anti-diagonal $\Delta$ in $E\times_S E$ in a canonical way. If we combine the inclusion 
\[
	\Ocal_{E\times E}( - \Delta)\cong \Jcal_\Delta\hookrightarrow \Ocal_{E\times E}
\]
with \eqref{ch_EP_eq5}, we get a morphism of $\Ocal_{E\times E}$-modules
\begin{equation}\label{ch_EP_eq6}
	\Po\otimes \Po^{\otimes -1} \hookrightarrow \Po \otimes \Omega^1_{E\times E/E}([e\times E]+[E\times e]).
\end{equation}
The \emph{Kronecker section} 
	\[
		s_{\mathrm{can}}\in \Gamma\left(E\times_S E^\vee, \Po \otimes_{\Ocal_{E\times E^\vee}} \Omega^1_{E\times E^\vee/E^\vee}([e\times E^\vee]+[E\times e])\right)
	\]
is then defined as the image of the identity section $\id_{\Po}\in\Gamma(E\times E, \Po\otimes \Po^{\otimes -1})$ under \eqref{ch_EP_eq6}.\par 

The universal property of the Poincar\'e bundle gives us a canonical isomorphisms for $D>1$:
\[
	\gamma_{1,D}\colon (\id\times [D])^*\Pcal\righteq ([D]\times \id)^*\Pcal
\]
Let us define the \emph{$D$-variant of the Kronecker section} by
\[
			\scan^D:= D^2\cdot  \gamma_{1,D}\left( (\id \times [D])^*(\scan)\right)-([D]\times \id)^*(\scan).
\]
The restriction of $\scan^D$ along $E\times_S\Inf^n_e E^\vee\hookrightarrow E\times_S E^\vee$ induces a section
\[
	\lnD\in \Gamma\left(E,\Ln^\dagger\otimes_E \Omega^1_{E/S}(E[D])\right).
\]
This almost yields the desired $1$-forms $\LnD$ except, that $\lnD$ are only $1$-forms relative $S$. But it is possible to lift these $1$-forms in a canonical way to the absolute $1$-forms
\[
	\LnD\in \Gamma\left(E,\Ln^\dagger\otimes_E \Omega^1_{E/K}(E[D])\right).
\]
\begin{thm}[{\cite[Theorem 5.7]{deRham}}]
The system of $1$-forms $(\LnD)_{n\geq 0}$ represents the polylogarithm in algebraic de Rham cohomology
	\[
		([\LnD])_{n\geq 0}=\polD\in \lim_n \HdR{1}{U_D/S,\Log{n}(1)}.
	\]
\end{thm}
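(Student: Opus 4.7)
The plan is to exploit the uniqueness characterization of $\polD$: by its very definition, $\polD$ is the unique pro-system in $\varprojlim_n \HdR{1}{U_D/K, \Log{n}(1)}$ whose image under the residue map in the localization sequence equals the horizontal section $D^2 1_e - 1_{E[D]}$. So it suffices to establish two things: first, that the system $([\LnD])_{n\geq 0}$ defines a compatible family of classes in the required de Rham cohomology groups; and second, that its residue is the prescribed element.

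For the first point, the Hodge filtration condition is essentially built into the construction, since the shift in $\Log{n}(1)$ combined with the placement of the $\Omega^1$-factor places $\LnD$ in filtration step $F^0$. The substantive check is closedness under the absolute connection. The relative form $\lnD$ is obtained by restricting the Kronecker section $\scan^D$ on $E \times_S E^\vee$ to the infinitesimal neighborhood $E \times_S \Inf^n_e E^\vee$. Because $\scan^D$ is constructed as the image of the identity section of $\Po \otimes \Po^{\otimes -1}$ under the natural inclusion of ideal sheaves, the universal integrable connection $\con{\dagger}$ on $\Po^\dagger$ annihilates its pullback, and hence $\con{\Ln^\dagger}(\lnD) = 0$. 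By construction, the canonical lift from relative to absolute $1$-forms gives $\nabla^{abs}_{\Ln^\dagger}(\LnD) = 0$, using integrability of $\con{\dagger}$. Compatibility in $n$ follows from functoriality under the inclusions $\Inf^n_e E^\dagger \hookrightarrow \Inf^{n+1}_e E^\dagger$.

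The main obstacle is the residue computation. For each torsion section $[P]: S \hookrightarrow E$ with $P \in E[D]$, the residue $\Res_{[P]}(\LnD)$ lies in $[P]^* \Ln^\dagger$, which after translation decomposes as $\prod_k \TSym^k \Hcal$; the augmentation map $\sigma$ only sees the degree-zero factor. The two summands of $\scan^D$ carry pole divisors coming from the pullbacks of the anti-diagonal $\Delta$ by $\id \times [D]$ and $[D] \times \id$ respectively. A local calculation using the explicit rigidification $r_0$ of the Poincar\'e bundle, together with the degree-$D^2$ behaviour of $[D]$, shows that at $P = e$ the first summand contributes $D^2$ to the degree-zero component, while at each $P \in E[D] \setminus \{e\}$ only the second summand yields a nonzero contribution, equal to $-1$. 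Summing these contributions produces exactly $D^2 1_e - 1_{E[D]}$.

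With both properties verified, uniqueness of the polylogarithm yields the equality $([\LnD])_{n \geq 0} = \polD$ in $\varprojlim_n \HdR{1}{U_D/K, \Log{n}(1)}$. The delicate piece is tracking the compatibility between the splitting $e^*\Ln^\dagger \cong \prod_k \TSym^k \Hcal$ and the residue map, particularly in verifying that higher symmetric-power components do not spuriously appear in the image of $\Res$ at $P = e$, where the infinitesimal structure of $\Inf^n_e E^\dagger$ most directly interacts with the pole of $\scan^D$; this is the computational heart of \cite[\S 5]{deRham}.
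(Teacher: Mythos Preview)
The paper does not give its own proof of this theorem: it is imported verbatim as \cite[Theorem 5.7]{deRham} and used as a black box to deduce \cref{cor_synrepresentative}. So there is no argument in the present paper to compare your proposal against.

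That said, your outline follows the natural strategy (characterize $\polD$ by its residue and verify the residue of $[\LnD]$), and the residue discussion is plausible at the level of a sketch. However, your closedness argument has a genuine gap. You write that ``the universal integrable connection $\con{\dagger}$ on $\Po^\dagger$ annihilates its pullback'' because $\scan^D$ is the image of $\id_\Po$ under an inclusion of line bundles. But $\scan$ is a section of $\Po\otimes\Omega^1_{E\times E^\vee/E^\vee}$ on $E\times_S E^\vee$, whereas $\con{\dagger}$ lives on $\Po^\dagger$ over $E\times_S E^\dagger$; there is no reason the map \eqref{ch_EP_eq6} should intertwine connections, and horizontality of $\id_\Po$ in $\Po\otimes\Po^{-1}$ says nothing about the image in $\Po\otimes\Omega^1$. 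Establishing $\con{\Ln^\dagger}(\lnD)=0$ requires an actual computation (this is part of the content of \cite[\S5]{deRham}), and the further passage from the relative form $\lnD$ to the absolute closed form $\LnD$ is a separate step that you have simply asserted. Since you yourself defer the ``computational heart'' back to \cite[\S5]{deRham}, your proposal is really a statement of what needs to be checked rather than a proof.
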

Combining this with \cref{prop_BK}, has the following immediate consequence for the syntomic realization:
\begin{cor}\label{cor_synrepresentative}
	Let $\pi:\Ee\rightarrow \Ss$ be a morphism of syntomic data of elliptic curves.	There is a unique system of overconvergent sections $\rho_n\in\Gamma(\bar{\Ee}_K,j_D^\dagger(\Ln^\dagger))$ satisfying the differential equation
	\[
		\nabla_{\Logsyn{n}}(\rho_n)=(1-\Phi)(\LnD).
	\]
	The pair $(\rho_n,\LnD)$ is the unique pair representing the syntomic realization of the polylogarithm:
	\[
		([\rho_n,\LnD])_{n\geq 0}=\polsyn \in \varprojlim_n \Hsynabs{1}{\Uu_D,\Logsyn{n}}.
	\]
\end{cor}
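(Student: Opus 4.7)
The plan is to apply \cref{prop_BK} at each level $n$ to the level-$n$ component of $\polsyn$ in $\Hsynabs{1}{\Uu_D, \Logsyn{n}(1)}$. First one must verify the hypothesis $F^0 \Logsyn{n}(1) = 0$. The Hodge filtration on $\Hcal = \Hsyn{1}{\Ee/\Ss,K(1)}$ satisfies $F^0 \Hcal = \om$ and $F^1 \Hcal = 0$: the twist by $K(1)$ shifts the usual Hodge filtration on $H^1_{\mathrm{dR}}$ of an elliptic curve down by one. The defining extension $0 \to \Hcal_E \to \LogsyndR{1} \to \Ocal_E \to 0$ together with compatibility of filtrations then forces $F^1 \LogsyndR{1} = 0$, and this vanishing propagates through tensor symmetric powers to give $F^1 \LogsyndR{n} = 0$. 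After the Tate twist one obtains $F^0 \Logsyn{n}(1) = 0$, so \cref{prop_BK} applies.

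For existence, pick any representative pair $(\alpha_0, \xi_0)$ of the level-$n$ component of $\polsyn$ as supplied by \cref{prop_BK}. By \cref{SR_corDRclass} the image of $[\alpha_0, \xi_0]$ under the boundary map to algebraic de Rham cohomology is the class $[\xi_0]$; by functoriality of the polylogarithm construction under forgetting the Frobenius and filtration, this image is precisely $\polD$, which by the de Rham theorem recalled above is represented by $\LnD$. Hence $\xi_0 - \LnD$ is $\nabla$-exact, and since $\LnD$ actually lies in $F^0 \LogsyndR{n}(1) \otimes \Omega^1_{\bar{E}_K}(\log E[D])$, Griffiths transversality lets us write $\xi_0 - \LnD = \nabla(\eta)$ with $\eta \in F^0$. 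A direct computation using $\nabla \circ \Phi = \Phi \circ \nabla$ and horizontality of $\LnD$ shows that the modified pair $(\alpha_0 - (1-\Phi)\eta,\, \LnD)$ is again a cocycle in the Bannai--Kings description, cohomologous to $(\alpha_0, \xi_0)$. Setting $\rho_n := \alpha_0 - (1-\Phi)\eta$ then produces a section in $\Gamma(\bar{\Ee}_K, j_D^\dagger \LogsyndR{n})$ representing the level-$n$ polylogarithm and satisfying $\nabla_{\Logsyn{n}}(\rho_n) = (1-\Phi)(\LnD)$.

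For uniqueness, suppose $(\rho_n, \LnD)$ and $(\rho_n', \LnD)$ both represent the level-$n$ component. Their difference $(\rho_n - \rho_n',\, 0)$ is a coboundary in the cone complex, so $\rho_n - \rho_n'$ must be a horizontal overconvergent section. The transition maps $\Hsyn{0}{\Ee/\Ss, \Logsyn{n+1}} \to \Hsyn{0}{\Ee/\Ss, \Logsyn{n}}$ vanish by \cref{SR_Log_propCohom}, and compatibility across the inverse system forces the horizontal difference to be zero. Uniqueness of $\rho_n$ as a solution of the differential equation follows by the same argument, since two solutions differ by a horizontal section.

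The principal technical obstacle is ensuring that the correction $\eta$ can be chosen so that $(1-\Phi)\eta$ lands in $\Gamma(\bar{\Ee}_K, j_D^\dagger \LogsyndR{n})$, i.e.\ so that the modified section is genuinely overconvergent on the complement of $E[D]$. Because $\LnD$ is already given explicitly as an algebraic object with logarithmic poles along $E[D]$ (hence overconvergent on $\Uu_D$), and because $\Phi$ preserves overconvergence by construction of syntomic coefficients, this amounts to a routine verification comparing the algebraic and rigid-analytic categories.
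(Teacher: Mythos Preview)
Your argument reaches the right conclusion but is more elaborate than the paper's, which treats the corollary as immediate from \cref{prop_BK}. The point you under-exploit is that ``uniquely'' in \cref{prop_BK} is literal: since $F^0 M = 0$ kills the degree-$0$ term of $F^0\DR^\bullet(M)$, the cone complex computing $\Hsynabs{1}{\Uu_D,\Mcal}$ has no $1$-coboundaries, so cocycle pairs $(\alpha,\xi)$ biject with classes. There is therefore nothing to modify --- the unique pair $(\alpha_0,\xi_0)$ for the level-$n$ polylogarithm already has $\xi_0 = \LnD$, and one sets $\rho_n := \alpha_0$. Your separate uniqueness argument via the vanishing transition maps of \cref{SR_Log_propCohom} is likewise redundant: uniqueness of the pair at each fixed $n$ already follows from \cref{prop_BK}.

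There is one genuine slip. To pass from $[\xi_0] = [\LnD]$ to $\eta\in F^0$ you invoke Griffiths transversality, but that goes the wrong direction: it says $\nabla(F^0)\subseteq F^{-1}\otimes\Omega^1$, not that an exact form lying in $F^{-1}\otimes\Omega^1$ admits a primitive in $F^0$. The correct input is strictness of the Hodge filtration, i.e.\ the $E_1$-degeneration that is part of admissibility: it gives $\nabla(M)\cap\bigl(F^{-1}M\otimes\Omega^1\bigr)=\nabla(F^0 M)=0$, so closed forms in $F^{-1}M\otimes\Omega^1$ inject into $\HdRabs{1}{U_{D,K},\Logn}$, whence $\xi_0=\LnD$. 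Incidentally, once you have written $\eta\in F^0$ you should notice that $F^0 M=0$ forces $\eta=0$, which collapses your entire modification step; the paper's ``immediate'' is exactly this observation.
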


In the following we would like to give an explicit description of the overconvergent sections $\rho_n$ which describe the Frobenius structure on the polylogarithmic extension in syntomic cohomology.

\section{The ordinary locus of the modular curve}
Let $p$ be a prime and $N>3$ be an integer prime to $p$. Let $K=\Qp$ and denote by $\Vv$ the smooth pair $\Vv=(\Spec \Zp,\Spec\Zp)$. For the modular curve $M=M_{N,\Zp}$ with $\Gamma(N)$-level structure over $\Zp$ choose a smooth compactification $\bar{M}$ and let $(E=E_{N},\alpha_N)$ be the universal elliptic curve with level $N$-structure over $M$. Let $\bar{E}$ be the Neron model of $E$ over $\bar{M}$. Then
\[
	(E,\bar{E})\xrightarrow[]{\pi} (M,\bar{M})
\]
is a smooth proper morphism of smooth pairs. If we restrict to the ordinary locus $E^\ord\subseteq E$, we obtain a smooth proper morphism of smooth pairs:
\[
	(E^\ord,\bar{E})\rightarrow (M^\ord,\bar{M})
\]
Let $\Ecal^\ord,\bar{\Ecal}$ resp. $\Mcal^\ord,\bar{\Mcal}$ be the formal completion of $E^\ord,\bar{E}$ resp. $M^\ord,\bar{M}$ with respect to the special fiber. Then, $\Mcal^\ord$ classifies ordinary elliptic curves with level $N$-structure over $p$-adic rings. If we divide an ordinary elliptic curve with level $N$-structure $(E,\alpha)$ by its canonical subgroup, we obtain another ordinary elliptic curve $(E'=E/C,\alpha')$ with level $N$-structure. In particular, the map $(E,\alpha)\mapsto(E/C,\alpha')$ induces a map
\[
	\Frob: \Mcal^\ord\rightarrow\Mcal^\ord
\]
lifting the Frobenius morphism on the special fiber. By \cite[Chapter 3]{katz_padicmf} the induced Frobenius $\Mcal^\ord_{\Qp}\rightarrow \Mcal^\ord_{\Qp}$ on the associated rigid analytic space $\Mcal^\ord_{\Qp}$ is overconvergent. In particular, we have a canonical overconvergent Frobenius $\phi_M$ on the smooth pair $(M^\ord,\bar{M})$. The associated syntomic datum will be denoted by
\[
	\Mm^\ord:=(M^\ord,\bar{M},\phi_M).
\]
For the moment let us write $E^\ord|_{\Mcal^\ord}$ for the pullback of the ordinary elliptic curve to the formal completion. Then, $E^\ord|_{\Mcal^\ord}$ is the universal ordinary elliptic curve with level structure over $p$-adic rings. The commutative diagram
\begin{equation*}
	\begin{tikzcd}
		E^\ord|_{\Mcal^\ord}\ar[rd,"\pi"] \ar[r,"\varphi"] & E'|_{\Mcal^\ord}:=(E^\ord/C)|_{\Mcal^\ord}\ar[d]\ar[dr, phantom, "\lrcorner", very near start] \ar[r,"\widetilde{\Frob}"] & E^\ord|_{\Mcal^\ord}\ar[d,"\pi"]\\
		& \Mcal^\ord\ar[r,"\Frob"] & \Mcal^\ord
	\end{tikzcd}
\end{equation*}
induces a Frobenius lift $E^\ord|_{\Mcal^\ord}\rightarrow E^\ord|_{\Mcal^\ord}$ on $E^\ord|_{\Mcal^\ord}$ which gives us a canonical overconvergent Frobenius $\phi_E$ on the smooth pair $(E^\ord,\bar{E})$. The associated syntomic datum is
\[
	\Ee:=(E^\ord,\bar{E},\phi_E).
\]
Let us write $\pi:\Ee\rightarrow\Mm$ for the corresponding morphism of syntomic data.

Let us now turn our attention to the moduli space of trivialized elliptic curves as defined by Katz \cite{katz_padicinterpol}. Let $E/S=\Spec R$ be an elliptic curve over a $p$-adic ring $R$. A trivialization of $E$ is an isomorphism
	 \[
	 	\beta: \Ef\righteq \Gmf{S}
	 \]
	 of formal groups over $R$. For $N\geq 1$ a natural number coprime to $p$, a trivialized elliptic curve with $\Gamma(N)$-level structure is a triple $(E,\beta,\alpha_N)$ consisting of an elliptic curve $E/S$ a rigidification $\beta$ and a level structure $\alpha_N:(\ZZ/N\ZZ)^2_S\righteq E[N]$. Let $(\Etriv,\beta,\alpha_N)$ be the universal trivialized elliptic curve with $\Gamma(N)$-level structure over $\Mtriv=\Spec \VpN$. For more details we refer to \cite[Ch. V]{katz_padicinterpol}. The ring $\VpN$ will be called ring of generalized $p$-adic modular forms. Let us write $\Mftriv=\Spf \VpN$ for the formal completion of the moduli space $M^\triv=\Spec \VpN$ along its special fiber. The existence of a trivialization on an elliptic curve already implies that the curve is ordinary. Thus, the forgetful map
\[
	(E,\alpha,\beta)\mapsto (E,\alpha)
\]
induces a map $\Mftriv\rightarrow\Mcal^\ord$. The induced map on rigid analytic spaces sits in the following Cartesian diagram
\[
	\begin{tikzcd}
		\Ecal^\triv_{\Qp} \ar[r,"\tilde{p}"]\ar[d] & \Ecal^\ord_{\Qp}\ar[d]\\
		\Mcal^\triv_{\Qp} \ar[r]  & \Mcal^\ord_{\Qp}.
	\end{tikzcd}
\]
For $(a,b)\in(\ZZ/N\ZZ)^2$ let $t=t_{a,b}$ respectively $\tilde{t}=\tilde{t}_{a,b}$ be the associated torsion sections on $E^\ord$ respectively $\Etriv$. Let us furthermore write $]\tilde{t}[$ for the tubular neighbourhood of the reduction of $\tilde{t}$ in $\Ecal^\triv_{\Qp}$. Pullback along the covering map $\tilde{p}$ induces an injection
\[
	\Gamma(]t[, (\Lcal_{n,E^\ord}^\dagger)^\rig) \hookrightarrow \Gamma(]\tilde{t}[, (\Lcal_{n,\Etriv}^\dagger)^\rig), \quad \sigma\mapsto \tilde{p}^*\sigma.
\]
The main goal of the rest of the paper is to give an explicit description of the overconvergent sections $\rho_n$ appearing in the description of the syntomic polylogarithm in \cref{cor_synrepresentative}. The advantage of describing $\tilde{p}^*(\rho_n)$ instead of $\rho_n$ is that we will construct a canonical basis $\hat{\omega}^{[k,l]}$ of $(\Lcal_{n,\Etriv}^\dagger)^\rig$ in \cref{sec_splitting}. This basis allows us to describe sections of $\Lcal_{n}^\dagger$ on tubular neighbourhoods of torsion sections explicitly.

\section{An explicit model for the syntomic the logarithm sheaves for ordinary elliptic curves}
As before let $\Ee^\ord\rightarrow \Mm^\ord$ be the syntomic datum associated to the ordinary part of the modular curve of level $\Gamma(N)$. First let us give a complete description of the syntomic logarithm sheaves in terms of the Poincar\'e bundle. Recall, that the tuple
\[
	(\Ln^\dagger,\nabla_{\Ln^\dagger}^{abs},F^\bullet, 1)
\]
provides an explicit model for the de Rham part of the syntomic logarithm sheaves in terms of the Poincar\'e bundle. It remains to give an explicit description of the Frobenius structure.Let us write $\varphi\colon E^\ord\twoheadrightarrow E'=E^\ord/C$ for the isogeny given by the quotient of $E^\ord$ by its canonical subgroup. Recall, that the universal vectorial extension $E^{\ord,\dagger}$ of $E^{\ord,\vee}$ classifies line bundles of degree zero with an integrable connection. In particular, there is a unique map $\varphi^\dagger: E'^\dagger\rightarrow E^{\ord,\dagger} $ together with a unique horizontal isomorphism
\begin{equation}\label{eq_Poincare_isogeny}
	(\id_{E^\ord}\times \varphi^\dagger )^*\Po^\dagger \righteq (\varphi\times \id_{E'^\dagger})^*\Po'^\dagger.
\end{equation}
classifying the line bundle $(\varphi\times \id_{E'^\dagger})^*\Po'^\dagger$ with its pullback connection. The restriction of $\varphi^\dagger: E'^\dagger\rightarrow E^{\ord,\dagger}$ to the $n$-th infinitesimal neighbourhood of the identity induces an isomorphism:
\[
	\varphi^\dagger|_{\Inf^n_e E_K'^\dagger}\colon \Inf^n_e E'^\dagger_K\righteq \Inf^n_e{E^{\ord,\dagger}_K}.
\]
Restricting the map \eqref{eq_Poincare_isogeny} along $E_K\times \Inf^n_e E^\dagger_K$ gives an horizontal isomorphism
\begin{equation}\label{eq_Ln_phi}
	\Lcal_{n,E_K}^\dagger \righteq \varphi^*\Lcal_{n,E_K'}^\dagger.
\end{equation}
The reader familiar with the properties of the logarithm sheaves might have noticed, that this isomorphism is nothing than the observation that the logarithm sheaves are invariant under isogenies. Now, let us once again consider the diagram
\begin{equation*}
	\begin{tikzcd}
		E^\ord|_{\Mcal^\ord}\ar[rd,"\pi"] \ar[r,"\varphi"] & E'|_{\Mcal^\ord}:=(E^\ord/C)|_{\Mcal^\ord}\ar[d]\ar[dr, phantom, "\lrcorner", very near start] \ar[r,"\widetilde{\Frob}"] & E^\ord|_{\Mcal^\ord}\ar[d,"\pi"]\\
		& \Mcal^\ord\ar[r,"\Frob"] & \Mcal^\ord
	\end{tikzcd}
\end{equation*}
Recall, that the composition $\phi_E:=\widetilde{\Frob}\circ \varphi$ provides a canonical Frobenius lift on $E^\ord$. 
Since the Poincar\'e bundle is compatible with base change we have an horizontal isomorphism
\begin{equation}\label{eq_Ln_phi2}
	\widetilde{\Frob}^* \Ln^\dagger \righteq \Lcal_{n,E'}^\dagger.
\end{equation}
Thus, combining \eqref{eq_Ln_phi} and \eqref{eq_Ln_phi2} gives an horizontal isomorphism
\[
	\Lcal^\dagger_{n,E_K^\ord} \righteq \phi_E^* \Lcal^\dagger_{n,E_K^\ord}.
\]
The inverse of this map induces the desired Frobenius structure
\[
	\Phi_{\Ln^\dagger}\colon \phi_E^* (\Ln^\dagger)^\rig \righteq  (\Ln^\dagger)^\rig.
\]
This completes our full description of the syntomic logarithm sheaves in terms of the Poincar\'e bundle:
\begin{prop}
	The filtered overconvergent $F$-isocrystal
	\[
		(\Ln^\dagger,\nabla_{\Ln^\dagger}^{abs},F^\bullet,\Phi_{\Ln^\dagger},1)
	\]
	is an explicit model for the syntomic logarithm sheaf.
\end{prop}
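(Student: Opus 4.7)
The statement collapses to the case $n=1$, since the higher logarithm sheaves are defined by $\TSym^n$ and the Frobenius $\Phi_{\Ln^\dagger}$ is defined functorially via the horizontal isomorphisms \eqref{eq_Poincare_isogeny} and \eqref{eq_Ln_phi2}, so that it is compatible with taking symmetric powers. For $n=1$, the de Rham data $(\Ln^\dagger,\nabla_{\Ln^\dagger}^{abs},F^\bullet,1)$ is already known to represent the first de Rham logarithm sheaf by Scheider's theorem. The syntomic logarithm sheaf is characterized uniquely (up to unique isomorphism, since its representatives have no non-trivial automorphisms) as the extension class $[\Logsyn{1}]\in\Ext^1_{S(\Ee)}(K(0),\Hcal_E)$ with $e^*[\Logsyn{1}]=0$ and $\tilde{\delta}[\Logsyn{1}]=\id_{\Hcal}$. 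So there are three things to verify: $\Phi_{\Ln^\dagger}$ is an admissible Frobenius structure compatible with $\nabla_{\Ln^\dagger}^{abs}$ and $F^\bullet$; the pullback $e^*(\Ln^\dagger,\Phi_{\Ln^\dagger})$ splits canonically as $K(0)\oplus\Hcal$ in $S(\Mm)$; and the induced $F$-isocrystal structure on the kernel $\Hcal_E\hookrightarrow\Ln^\dagger$ recovers $\pi^*\Hcal$, so that $\tilde{\delta}$ sends the extension class to $\id_{\Hcal}$.

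For the first point, horizontality of $\Phi_{\Ln^\dagger}$ follows from the fact that both building blocks \eqref{eq_Poincare_isogeny} and \eqref{eq_Ln_phi2} are horizontal: \eqref{eq_Poincare_isogeny} by the universal property of $E^{\ord,\dagger}$ classifying line bundles with integrable connection, and \eqref{eq_Ln_phi2} by base change of the canonical universal connection along $\widetilde{\Frob}$. Invertibility on the rigid analytic space follows because $\varphi^\dagger$ restricts to an isomorphism on infinitesimal neighborhoods of the identity and $\widetilde{\Frob}$ is an isomorphism at the level of moduli. Griffiths transversality with respect to the Hodge filtration defined on $\Ln^\dagger$ via $\Hcal_E \hookrightarrow \Ln^\dagger$ is automatic for Frobenius structures. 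Admissibility is then standard: the underlying $F$-isocrystals on the fibers are successive extensions of $\Hcal$ and $K(0)$, which are weakly admissible, so the $E_1$-degeneration and weak admissibility of the whole tower follows by induction from the short exact sequence defining $\Ln^\dagger$.

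For the second point, the canonical rigidification $r_0$ of the Poincaré bundle is compatible with the isogeny $\varphi$ and with $\widetilde{\Frob}$ (being defined via the identification $e^*\Ocal_E(-[e])\cong \om_{E/S}$ which is natural in base change). Consequently, when restricted along $e\colon \Mm\to\Ee$, the Frobenius $\Phi_{\Ln^\dagger}$ respects the direct sum decomposition $e^*\Ln^\dagger\cong\Ocal_{\Mm}\oplus \bigoplus \TSym^k\Hcal$ coming from the splitting by the canonical section $1$. On the $\Ocal_{\Mm}$-summand it acts by the identity, and on the $\Hcal$-summand it acts by the Frobenius structure of $\Hcal=\Hsyn{1}{\Ee/\Mm,K(1)}$; this identifies $e^*(\Ln^\dagger,\Phi_{\Ln^\dagger})$ with $K(0)\oplus\Hcal$ in $S(\Mm)$, so that $e^*[\Ln^\dagger]=0$.

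For the third point, we identify $\Hcal_E$ with the subsheaf of $\Ln^\dagger$ coming from the first infinitesimal neighborhood via the identification $\Hcal_E \cong \pi^*\HdR{1}{E/S,\Ocal_E}^\vee$ used in Scheider's construction, together with the obvious Frobenius on both sides induced by $\phi_E$. The Frobenius $\Phi_{\Ln^\dagger}$ restricts on this subsheaf to exactly the Frobenius of $\Hcal_E=\pi^*\Hcal$, since the horizontal isomorphisms \eqref{eq_Ln_phi} and \eqref{eq_Ln_phi2} are compatible with the corresponding isomorphisms on the first infinitesimal neighborhood of $E^{\ord,\dagger}$, which govern $\Hcal$. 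The boundary map $\tilde{\delta}$ in the Leray sequence sends our extension class to the induced map $\Hcal\to\Hcal$, which by the above is the identity. The main technical point — hence the expected obstacle — is establishing the naturality of the rigidification $r_0$ under both $\varphi$ and $\widetilde{\Frob}$ at the level of the full $F$-isocrystal data, and the compatibility of the isomorphism \eqref{eq_Poincare_isogeny} with the base changes on $E^{\ord,\dagger}$; once this is done the characterizing properties of $\Logsyn{1}$ fix our object up to unique isomorphism, and extension to general $n$ is formal by passing to $\TSym^n$.
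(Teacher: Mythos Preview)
Your proposal is correct in outline, but it takes a considerably longer route than the paper. The paper's argument is a one-liner: since the de Rham datum $(\Ln^\dagger,\nabla_{\Ln^\dagger}^{abs},F^\bullet,1)$ is already known to realize the de Rham logarithm sheaf, the abstract syntomic Frobenius can be transported to $\Ln^\dagger$, and then one only has to check that the two horizontal isomorphisms $\phi_E^*(\Ln^\dagger)^\rig\righteq(\Ln^\dagger)^\rig$ agree. The paper does this by observing that $e^*\Phi_{\Ln^\dagger}(1)=1$ (immediate from the rigidification $r_0$) and invoking the rigidity principle that there is a \emph{unique} horizontal morphism with this normalization. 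Your approach instead re-verifies the defining conditions $e^*[\Ln^\dagger]=0$ and $\tilde\delta[\Ln^\dagger]=\id_\Hcal$ at the level of $S(\Ee)$, which forces you to check that $\Phi_{\Ln^\dagger}$ restricts to the correct Frobenius on $\Hcal_E$ and is compatible with the splitting along $e$; this is more work, though it has the merit of being self-contained and not appealing to the rigidity of horizontal automorphisms.

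Two small remarks on your write-up. First, the sentence ``Griffiths transversality \ldots\ is automatic for Frobenius structures'' is misplaced: Griffiths transversality is a condition on the connection relative to the filtration, not on the Frobenius, and in the definition of $S(\Xx)$ no compatibility between $\Phi$ and $F^\bullet$ is imposed. Second, the admissibility discussion is unnecessary here; the syntomic logarithm sheaf is characterized purely as an object of $S(\Ee)$ via its extension class, and admissibility plays no role in the proposition.
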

\begin{proof}
	It only remains to prove that the constructed Frobenius structure coincides with the abstractly defined Frobenius structure of the syntomic logarithm sheaves. But it follows immediately from the definition of $\Phi_{\Ln^\dagger}$ that $e^*\Phi_{\Ln^\dagger}(1)=1$ where $1\in\Gamma(S,e^*\Ln^\dagger)$ is the fixed section in the above datum. Since there is only one horizontal morphism with this property, we see that $\Phi_{\Ln^\dagger}$ coincides with the abstractly defined Frobenius structure on the syntomic logarithm sheaf.
\end{proof}

\section{$p$-adic theta functions and the $p$-adic Eisenstein measure}
In this section we recall our approach towards $p$-adic interpolation of Eisenstein--Kronecker series via $p$-adic theta functions, see \cite[Part II]{EisensteinPoincare}.
\subsection{$p$-adic Eisenstein--Kronecker series}
Let us first recall Norman's definition of the $p$-adic Eisenstein--Kronecker series. Classical modular forms of weight $k$ and level $\Gamma(N)$, can be seen more geometrically as sections of the $k$-th tensor power of the cotangent-sheaf $\omega_{E(\CC)/M(\CC)}^{\otimes k}$ of the complex universal elliptic curve of level $\Gamma(N)$. This leads in a natural way to the definition of geometric modular forms and allows to study modular forms from an algebraic perspective. More generally, a certain class of $\Ccal^\infty$-modular forms, the \emph{quasi-holomorphic modular forms} allows a similar interpretation. For more on \emph{quasi-holomorphic modular forms} and there geometric interpretation let us refer to \cite[\S 2]{urban}. It is possible to see quasi-holomorphic modular forms (of level $\Gamma(N)$, weight $k$ and order $r$) as sections
\[
	F^{k-r} \Sym^k \Hcal^1_{\mathrm{dR}}
\]
sitting in a certain filtration step of the Hodge filtration of symmetric powers of the relative de Rham cohomology $\Hcal^1_{\mathrm{dR}}:=R^1\pi_* \Omega^\bullet_{E/M}$. The link back to the associated $\Ccal^\infty$-modular forms comes from the Hodge decomposition
\[
	\Hcal^1_{\mathrm{dR}}(\Ccal^\infty)\righteq \omega(\Ccal^\infty)\oplus\bar{\omega}(\Ccal^\infty)
\]
which is non-holomorphic. \emph{Eisenstein--Kronecker series} provide a particular class of nearly-holomorphic modular forms of number theoretic interest. For a given lattice $\Gamma=\ZZ+\tau\ZZ\subseteq \CC$ and $t\in \frac{1}{N}\Gamma,t'\in\frac{1}{D}\Gamma$ let us consider the series
\[
	\tilde{e}_{a,b}(t,t';\tau):=\frac{(-1)^{a+b-1}(b-1)!}{A(\Gamma)^a} \sum_{\gamma\in\Gamma\setminus\{-t\}} \frac{(\bar{t}+\bar{\gamma})^a}{(t+\gamma)^b} \langle \gamma,t'  \rangle
\]
with $\langle z,w  \rangle:=\exp\left( \frac{z\bar{w}-w\bar{z}}{A(\Gamma)} \right)$ and $A(\Gamma)=\frac{\tau-\bar{\tau}}{2 \pi i} $. This series converges absolutely for $b>a+2$; for arbitrary integers $a,b$ it can be defined by analytic continuation, c.f.~\cite{bannai_kobayashi}. The above mentioned geometric interpretation of nearly-holomorphic modular forms is very useful for studying algebraic and $p$-adic properties of Eisenstein-Kronecker series. Indeed, it is possible to associate in a functorial way to every test object $(E/S,t,t')$ with $E/S$ an elliptic curve and $t\in E[N](S),t'\in E[D](S)$ torsion sections certain elements
\[
	E^{a,b}_{t,t'}\in \Gamma(S,\Sym^{k+r}\HdR{1}{E/S})
\]
which correspond to the classical analytic Eisenstein--Kronecker series $\tilde{e}_{a,b}(t,t')$ via the Hodge decomposition on the universal elliptic curve. This purely algebraic interpretation of real analytic Eisenstein series goes back to a construction of Katz involving the Gauss-Manin connection on the modular curve. An alternative construction can be given using the Poincar\'e bundle on the universal vectorial extension of an elliptic curve, cf.~\cite[\S 4]{EisensteinPoincare}. For the construction of $E^{a,b}_{t,t'}$ and for the discussion of their properties we refer to \cite[\S 4]{EisensteinPoincare}. While studying the syntomic realization of the elliptic polylogarithm, the following variant of the geometric Eisenstein--Kronecker series will appear naturally: For a test object $(E/S,t\in E[N](S))$ and a fixed integer $D>1$ let us consider
\[
	\EisD:=\sum_{0\neq t'\in E[D](S)}E^{k,r+1}_{t,t'}\in \Gamma(S,\Sym^{k+r+1}\HdR{1}{E/S}).
\]
Once the algebraic sections $\EisD$ are defined, it is straightforward to define Eisenstein--Kronecker series $p$-adically. Instead of applying the Hodge decomposition one can use the unit root decomposition on the universal trivialized elliptic curve to construct generalized $p$-adic modular forms: Let $\Etriv/\Mtriv$ be the universal trivialized elliptic curve and for $(a,b)\in (\ZZ/N\ZZ)^2$ let $t=t_{a,b}\in \Etriv[N](\Mtriv)$ be the associated $N$-torsion section. Applying the unit root decomposition
\[
	\Sym^{k+r+1}\HdR{1}{\Etriv/\Mtriv}\twoheadrightarrow \Sym^{k+r+1} \om_{\Etriv/\Mtriv}\righteq \Ocal_{\Mtriv} 
\]
followed by the trivialization $\om_{\Etriv/\Mtriv}=\beta^*\left(\frac{\dd T}{1+T}\right)\cdot \Ocal_{\Mtriv}$ to the sections $\EisD$ yields generalized $p$-adic modular forms
\[
	\EispD\in \VpN=\Gamma(\Mtriv,\Ocal_{\Mtriv}).
\]
Let us call these $p$-adic modular forms \emph{$p$-adic Eisenstein--Kronecker series}.

\subsection{$p$-adic theta functions of the Poincar\'e bundle}\label{subsec_padictheta}
Let us briefly recall the construction of $p$-adic theta functions for sections of the Poincar\'e bundle. For details we refer to \cite[\S 6]{EisensteinPoincare}. Let $E/S$ be an elliptic curve over a $p$-adic ring $S=\Spec R$ with fiberwise ordinary reduction. Let us write
\[
	i_n\colon C_n:=E[p^n]_0\hookrightarrow E,\quad j_n\colon D_n:=E^\vee[p^n]_0\hookrightarrow E^\vee
\]
for the inclusion of the connected components of the $p^n$-torsion groups. We define
\[
	\varphi_n\colon E\twoheadrightarrow E/C_n=:E'
\]
and write $D_n':=(E')^\vee[p^n]_0$ for the connected of the $p^n$-torsion of $(E')^\vee$. Since $\varphi_n^\vee\colon (E')^\vee\rightarrow E^\vee$ is \'etale, it induces an isomorphism $D'_n\righteq D_n$. Let us write $\Po$ for the Poincar\'e bundle on $E\times_S\Ed$ and $\Po'$ for the Poincar\'e bundle on $E'\times_S (E')^\vee$. By pullback of the Poincar\'e bundle $\Po$ along the commutative diagram
\[
	\begin{tikzcd}
		D'_n\ar[r,"j'_n",hook]\ar[d,"\cong"] & (E')^\vee\ar[d,"\varphi_n^\vee"]\\
		D_n\ar[r,"j_n",hook] & E^\vee
	\end{tikzcd}
\]
and using the isomorphism $(\id\times \varphi^\vee_n)^*\Po\righteq (\varphi_n\times\id)^*\Po'$ we obtain an isomorphism
\[
	(i_n\times j_n)^*\Po \righteq (i_n\times j'_n)^*(\id\times\varphi_n^\vee)^*\Po\righteq (i_n\times j'_n)^*(\varphi_n\times \id)^*\Po'.
\]
On the other hand, $\varphi_n\circ i_n$ factors through the zero section and we can use the canonical rigidification of the Poincar\'e bundle $(e\times\id)\Po'\righteq \Ocal_{(E')^\vee}$ to deduce the isomorphism of $\Ocal_S$-modules
\[
	(i_n\times j_n)^*\Po \righteq (\varphi_n\circ i_n\times j'_n)^*\Po'\righteq \Ocal_{C_n}\otimes_{\Ocal_S} \Ocal_{D_n}.
\]
By passing to the limit, we obtain the desired isomorphism
\[
	\Po|_{\Ef\times_S\Ef^\vee}\righteq \Ocal_{\Ef\times_S\Ef^\vee}.
\]
Slightly more generally, we can define a trivialization at every $N$-torsion section $t\in E[N](S)$ for $N$ coprime to $p$ as follows: The canonical isomorphism
\[
	([N]\times\id)^*\Po\righteq (\id\times[N])^*\Po 
\]
induces an isomorphism
\begin{equation}\label{eq_translation}
	(T_t\times [N])^*\Po \righteq (T_t\times \id)^*([N]\times \id)^*\Po=([N]\times \id)^*\Po\righteq (\id\times [N]^*\Po).
\end{equation}
Since $N$ is coprime to $p$, the map $[N]\colon \Ef^\vee\rightarrow \Ef^\vee$ is an isomorphism. Restricting \eqref{eq_translation} along $\Ef\times_S\Ef^\vee$ allows us to define a trivialization of the Poincar\'e bundle infinitesimally around torsion sections:
\begin{equation}\label{eq_Poincare_translation}
	\Po|_{\Ef_t\times_S \Ef^\vee}\righteq \Po|_{\Ef\times_S\Ef^\vee}\righteq \Ocal_{\Ef\times_S\Ef^\vee}.
\end{equation}
This trivialization allows us to define $p$-adic theta functions for sections of the Poincar\'e bundle. Let us apply this to the universal trivialized elliptic curve $\Etriv/\Mtriv$ of level $\Gamma(N)$ with $N$ co-prime to $p$. Let us recall that $\VpN:=\Gamma(\Mtriv,\Ocal_{\Mtriv})$ is the ring of \emph{generalized $p$-adic modular forms}. For $(a,b)\in (\ZZ/N\ZZ)^2$ let us write $\tilde{t}:=\tilde{t}_{(a,b)}$ for the associated $N$-torsion section. The trivialization gives us an isomorphism
\[
	\Gamma(\Eftriv_t\times_{\Mtriv}\Edftriv,\Po|_{\Eftriv\times\Edftriv})\righteq \VpN\llbracket S,T\rrbracket.
\]
If $U\subseteq \Etriv\times_{\Mtriv}\Eftriv$ is an open neighbourhood of the torsion section $\tilde{t}$ and $\sigma\in\Gamma(U,\Po)$, let us write
\[
	\theta_{\tilde{t}}(\sigma)\in \VpN\llbracket S,T \rrbracket
\]
for the image of $\sigma|_{\Eftriv_{\tilde{t}}\times\Edftriv}$ under the above isomorphism. We call $\theta_{(a,b)}(\sigma)$ the \emph{$p$-adic theta function associated to the section $\sigma$ at $\tilde{t}$}. Of particular interest for us is the $p$-adic theta function associated to the Kronecker section $\scan^D$. Let us write
\begin{equation}\label{eq_ptheta}
	\pthetaD_{\tilde{t}}:=\pthetaD_{(a,b)}:=\theta_{\tilde{t}}(\scan^D)
\end{equation}
for the $p$-adic theta function associated to the Kronecker section at $\tilde{t}=\tilde{t}_{(a,b)}$. In the next section, we will see that the $p$-adic theta function $\pthetaD_{(a,b)}$ is closely related to $p$-adic Eisenstein--Kronecker series.

\subsection{$p$-adic theta functions and $p$-adic Eisenstein--Kronecker series}
 Let us now turn our attention to the $p$-adic interpolation of the $p$-adic Eisenstein--Kronecker series. Let us refer to \cite[\S8]{EisensteinPoincare} for details: For a $p$-adic ring $R$ the \emph{Amice transform} is an isomorphism of $R$-algebras
\[
	R\llbracket S_1,...,S_n \rrbracket\righteq \Meas(\Zp^n,R),\quad f\mapsto \mu_f
\]
between the ring of power series over $R $ and the ring of $p$-adic measures with values in $R$. It is uniquely characterized by the property
\[
	\int_{\Zp^n}x_1^{k_1}\cdot ... x_n^{k_n}d \mu_f(x_1,...,x_n)=\partial_1^{\circ k_1}...\partial_n^{\circ k_n} f|_{S_1=...=S_n=0}
\]
where $\partial_i:=(1+S_i)\frac{\partial}{\partial S_i}$ is the invariant derivation associated to the coordinate $S_i$ on $\Gmf{R}^n$. It turns out that the Amice transform of the $p$-adic theta function $\pthetaD_{(a,b)}\in \VpN\llbracket S,T\rrbracket$ provides a measure $\muEisD$ which interpolates the Eisenstein--Kronecker series $p$-adically:
\begin{thm}[{\cite[Cor. 8.2]{EisensteinPoincare}}]
\[
	\EispD=\int_{\Zp^2} x^ky^r d\muEisD.
\]
\end{thm}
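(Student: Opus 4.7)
The plan is to unwind the defining property of the Amice transform and then match the resulting iterated invariant derivative of the $p$-adic theta function with the algebraic construction of $\EisD$. By definition of $\muEisD$ as the Amice transform of $\pthetaD_{(a,b)} \in \VpN\llbracket S, T \rrbracket$, one has
\[
\int_{\Zp^2} x^k y^r \, d\muEisD(x,y) \;=\; \partial_1^{\circ k}\,\partial_2^{\circ r}\,\pthetaD_{(a,b)}\Big|_{S=T=0},
\]
with $\partial_1 = (1+S)\partial_S$ and $\partial_2 = (1+T)\partial_T$ the translation invariant derivations. So the task is to geometrically identify the right hand side with $\EispD$.

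First I would translate the invariant derivations back to geometry. The trivialization $\beta \colon \Eftriv \righteq \Gmfabs$ identifies $\partial_1$ with the unique translation invariant derivation on $\Eftriv$ dual to the generator $\beta^*(dT/(1+T))$ of $\om_{\Etriv/\Mtriv}$, and analogously for $\partial_2$ on $\Edftriv$ (using the isomorphism $\Edftriv \cong \Gmfabs$ arising from $p$-adic duality in the ordinary case). Consequently, applying $\partial_1^{\circ k} \partial_2^{\circ r}$ at $S=T=0$ to a section of the Poincaré bundle in the trivialized chart corresponds to $k+r$-fold infinitesimal differentiation of the section in the directions of the unit-root subspaces of $\HdR{1}{\Etriv/\Mtriv}$ and its dual, followed by evaluation at the origin of $\Eftriv \times_{\Mtriv} \Edftriv$, and then trivialization via $\beta$.

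Second I would use the construction of the geometric Eisenstein--Kronecker sections recalled from \cite[\S 4]{EisensteinPoincare}: the algebraic section $\EisD \in \Gamma(\Mtriv, \Sym^{k+r+1}\HdR{1}{\Etriv/\Mtriv})$ is produced by taking the Kronecker section $\scan^D$, restricting it infinitesimally at $\tilde{t} \times e$, and extracting its $(k,r)$-component in the tensor algebra of $\HdR{1}{\Etriv/\Mtriv}$ via the Poincaré bundle interpretation of the universal vectorial extension. The $p$-adic Eisenstein--Kronecker series $\EispD$ is then defined by projecting this section to $\Sym^{k+r+1}\om_{\Etriv/\Mtriv}$ along the unit-root decomposition and trivializing via $\beta$.

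Third, I would compare the two constructions. On an ordinary elliptic curve the rigidification $\Po|_{\Eftriv \times \Edftriv} \righteq \Ocal_{\Eftriv \times \Edftriv}$ used in \cref{subsec_padictheta} is compatible (by its very definition through the Frobenius-isogeny and canonical subgroup) with the unit-root splittings of $\HdR{1}{\Etriv/\Mtriv}$ and its dual. This compatibility means that the $p$-adic theta function $\pthetaD_{(a,b)}$ encodes precisely the unit-root projections of $\scan^D$ at $\tilde{t}$, so that differentiating $\pthetaD_{(a,b)}$ by $\partial_1^{\circ k}\partial_2^{\circ r}$ at the origin computes exactly the image of $\EisD$ under the unit-root decomposition; that image is $\EispD$ by definition. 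The main obstacle in this argument is the careful bookkeeping matching the algebraically defined projection to $\Sym^{k+r+1}\om$ (coming from the Hodge filtration together with the unit-root splitting on the ordinary locus) with the analytic invariant differentiation on the formal groups $\Eftriv$ and $\Edftriv$; this is the compatibility of the canonical rigidification of $\Po$ with the unit-root splitting, and is the technical heart of the identification.
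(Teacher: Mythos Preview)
The paper does not supply a proof of this statement: it is quoted as \cite[Cor.~8.2]{EisensteinPoincare} and used as a black box, with no argument given here. So there is nothing in the present paper to compare your proposal against directly.

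That said, your outline is the natural strategy and matches the architecture of the cited reference as described in the surrounding text. The reduction to iterated invariant derivatives of $\pthetaD_{(a,b)}$ at $S=T=0$ is immediate from the defining property of the Amice transform, and the identification of those derivatives with $\EispD$ is exactly the content of the compatibility between the formal trivialization of the Poincar\'e bundle and the unit-root decomposition. You have correctly located the one genuinely non-formal step: showing that the rigidification $\Po|_{\Eftriv\times\Edftriv}\righteq\Ocal_{\Eftriv\times\Edftriv}$ constructed via the canonical subgroup is compatible with the unit-root splitting of $\Hcal$. In the cited paper this is handled through the explicit basis $\hat{\omega}^{[k,l]}$ and the comparison lemmas that also appear here as \cite[Lemma~9.2, Lemma~9.6]{EisensteinPoincare}; your sketch would need to invoke precisely those results (or reprove them) to close the argument, but the overall shape is correct.
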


Let us view the restricted measure $\muEisD\Big|_{\Zp^\times\times\Zp}$ again as a measure on $\Zp\times\Zp$ and define $\pthetaDp_{(a,b)}$ as the power series corresponding to it under the Amice transform. As above, let us write $\Frob\colon \Mtriv\rightarrow \Mtriv$ for the Frobenius lift on the moduli space of trivialized elliptic curves induced by taking the quotient by the canonical subgroup $\Etriv\twoheadrightarrow \Etriv/C$. By abuse of notation, let us also write $\Frob^*$ for the map
\[
	\Frob^* \colon\VpN\llbracket S,T\rrbracket\rightarrow \VpN\llbracket S,T\rrbracket
\]
obtained by base change, i.e. $\Frob^*$ acts coefficient-wise on a power-series. We can give the following explicit description of $\pthetaDp_{(a,b)}(S,T)$:
\begin{prop}\label{prop_theta_restricted}
The Amice transform of the restricted measure $\muEisD\Big|_{\Zp^\times\times\Zp}$ is given by the formula
\[
	\pthetaDp_{(a,b)}(S,T)=\pthetaD_{(a,b)}(S,T)-\Frob^* \pthetaD_{(a,b)}([p](S),T).
\]
where $[p](S)$ denotes the $[p]$-series of the formal multiplicative group.
\end{prop}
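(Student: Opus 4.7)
The strategy is to split the integral defining $\pthetaDp$ into contributions over $\Zp^\times\times\Zp$ and $p\Zp\times\Zp$, and then identify the latter with $\Frob^*\pthetaD_{(a,b)}([p](S),T)$ via a distribution relation on $\muEisD$.

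By the definition of the Amice transform and additivity of integration,
\begin{align*}
\pthetaDp_{(a,b)}(S,T) &= \int_{\Zp^\times\times\Zp}(1+S)^x(1+T)^y\,d\muEisD(x,y) \\
&= \pthetaD_{(a,b)}(S,T) - \int_{p\Zp\times\Zp}(1+S)^x(1+T)^y\,d\muEisD(x,y).
\end{align*}
Using the identity $(1+S)^{pu}=(1+[p](S))^u$ together with the change of variables $x=pu$, the remaining integral becomes the Amice transform, evaluated at $([p](S),T)$, of the pushforward measure $\nu:=((x,y)\mapsto(x/p,y))_*(\muEisD|_{p\Zp\times\Zp})$. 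The claim thus reduces to the distribution relation
$$\nu=\Frob^*\muEisD,$$
where $\Frob^*$ acts coefficient-wise on the $\VpN$-valued measure.

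To establish this relation I would test against moments: by the moment formula in \cite[Cor.~8.2]{EisensteinPoincare} and the characterization of a measure by its moments, $\nu=\Frob^*\muEisD$ is equivalent to
$$\int_{p\Zp\times\Zp}x^ky^r\,d\muEisD = p^k\,\Frob^*\EispD \quad\text{for all }k,r\geq 0.$$
This identity is the measure-theoretic shadow of the isogeny compatibility of the Poincar\'e bundle $(\id\times\varphi^\vee)^*\Po\righteq(\varphi\times\id)^*\Po'$, where $\varphi\colon\Etriv\twoheadrightarrow\Etriv/C$ is the quotient by the canonical subgroup. Under the identification $\Etriv/C\cong\Frob^*\Etriv$ built into the definition of $\Frob$ on $\Mftriv$, the Kronecker section on $(\Etriv/C)\times(\Etriv/C)^\vee$ is matched with $\Frob^*\scan^D$. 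Since under the trivialization $\beta\colon\Eftriv\righteq\Gmf{}$ the isogeny $\varphi$ on the formal group corresponds to the $[p]$-series on $\Gmf{}$, the restriction to $p\Zp$ in the first variable of the measure captures precisely the pullback along $\varphi$ at the level of the theta function, producing the factor $\Frob^*\pthetaD_{(a,b)}([p](S),T)$ on the right-hand side.

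The main obstacle is the careful bookkeeping of trivializations. Concretely, one must verify that the trivialization of $\Po$ along $\Eftriv\times\Edftriv$ constructed in \S\ref{subsec_padictheta} is equivariant in the following sense: the pullback of $\scan^D$ along $\varphi\times\id$, when expressed via the analogous trivialization of $\Po'$ on $\widehat{\Etriv/C}\times\widehat{(\Etriv/C)^\vee}$ coming from the universal trivialization of $\Frob^*\Etriv$, coincides with $\Frob^*\pthetaD_{(a,b)}$ precomposed with $[p]\times\id$. Once this compatibility is pinned down, translating the geometric identity to moments via the unit-root decomposition used to define $\EispD$ yields the required moment identity, and hence the proposition.
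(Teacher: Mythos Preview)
Your approach is correct and is, at bottom, the same as the paper's. The paper's proof is a two-liner: it invokes \cite[Thm.~11.1]{EisensteinPoincare}, which asserts
\[
\int_{\Zp^\times\times\Zp} f(x,y)\,\dd\muEisD = \int_{\Zp\times\Zp} f(x,y)\,\dd\muEisD - \Frob^*\int_{\Zp\times\Zp} f(px,y)\,\dd\muEisD,
\]
and then specializes to $f(x,y)=(1+S)^x(1+T)^y$, using $(1+S)^{px}=(1+[p](S))^x$. Your distribution relation $\nu=\Frob^*\muEisD$ is exactly equivalent to this cited theorem, and your sketch of its proof---via the isogeny compatibility $(\id\times\varphi^\vee)^*\Po\cong(\varphi\times\id)^*\Po'$ for the quotient $\varphi$ by the canonical subgroup, together with the observation that $\varphi$ reads as $[p]$ on $\Gmf{}$ under the universal trivialization---is indeed the content of that result in \cite{EisensteinPoincare}. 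So you have unknowingly outlined a self-contained proof of the input the paper simply cites; the ``main obstacle'' you flag (compatibility of the trivializations at $\tilde{t}$ with those on $\Etriv/C$) is precisely the technical point handled there.

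One minor remark: your proposal to verify $\nu=\Frob^*\muEisD$ by testing against monomial moments is unnecessary detour. Once the isogeny compatibility of the Kronecker section is established at the level of the $p$-adic theta function, the identity $\pthetaD_{(a,b)}(S,T)|_{p\Zp\times\Zp\text{-part}}=\Frob^*\pthetaD_{(a,b)}([p](S),T)$ holds directly as an equality of power series, and there is no need to pass through the unit-root decomposition or the $\EispD$.
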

\begin{proof}
In \cite[Thm 11.1]{EisensteinPoincare} we have proven the formula
\[
		\int_{\Zp^\times\times \Zp} f(x,y)\dd \muEisD=\int_{\Zp\times \Zp} f(x,y)\dd \muEisD-\Frob^* \int_{\Zp\times \Zp} f(p\cdot x,y)\dd \muEisD.
	\]
	By passing to the Amice transform, we get the desired equality.
\end{proof}

\section{The infinitesimal splitting}\label{sec_splitting}
Recall that the logarithm sheaves can be obtained by taking successive extensions of symmetric powers of $\Hcal_E$. These extensions split after pullback along $e$, i.e.
\[
e^*\Ln^\dagger\righteq \bigoplus_{k=0}^n \Sym^k \Hcal.
\]
In this section we will extend this splitting to some infinitesimal neighbourhood of torsion sections. On the universal trivialized elliptic curve we have further a canonical basis of $\Hcal$. This together with the splitting will provide us with a canonical basis of $(\Ln^\dagger)^{rig}$ in tubular neighbourhoods of torsion sections.\par 
\subsection{Basic properties}
Let $E/S$ be an elliptic curve over a $p$-adic ring $S=\Spec R$ with fiber-wise ordinary reduction. Let us first recall that $\Ln^\dagger$ was defined by restriction of the Poincar\'e bundle $\Po^\dagger$ to an infinitessimal thickening of $E$:
\[
\Ln^\dagger:=(\pr_E)_* \left( \Po^\dagger\Big|_{E\times_S\Inf^n_e E^\dagger}\right).
\]
Similarly, let us define $\Lcal_n$ as the restriction of the classical Poincar\'e bundle on $E\times_S \Ed$ to an infinitessimal thickening of $E$, i.e.
\[
	\Lcal_n:=(\pr_E)_* \left(\Po|_{E\times_S \Inf^n_e \Ed} \right).
\]
Let us write
\[
	\hat{\Lcal}_n:=\Lcal_n|_{\Ef},\quad \hat{\Lcal}^\dagger_n:=\Lcal^\dagger_n|_{\Ef}
\]
for the restriction of $\Ln$ and $\Ln^\dagger$ to $\Ef$. The canonical projection of the universal vectorial extension to the dual elliptic curve $E^\dagger\rightarrow E^\vee$ induces an inclusion $\Ln\hookrightarrow\Ln^\dagger$ which allows us to view $\Ln$ as a sub-bundle of $\Ln^\dagger$. Note, that the connection on $\Ln^\dagger$ does not restrict to a connection on $\Ln$. The $\mathbb{G}_m$-biextension structure of the Poincar\'e bundle allows us to construct co-multiplication maps
\[
	\hat{\Lcal}_n\rightarrow \TSym^n \hat{\Lcal}_1,\quad  \hat{\Lcal}^\dagger_n\rightarrow \TSym^n \hat{\Lcal}^\dagger_1
\]
let us refer to \cite[\S 9.2]{EisensteinPoincare} for their definition. We have the following:
\begin{lem}[{\cite[Lemma 9.1]{EisensteinPoincare}}]
	The co-multiplication maps 
	\[
	\hat{\Lcal}_n\rightarrow \TSym^n \hat{\Lcal}_1,\quad  \hat{\Lcal}^\dagger_n\rightarrow \TSym^n \hat{\Lcal}^\dagger_1
	\]
	are injective and isomorphisms on the generic fiber
	\[
	\hat{\Lcal}_{n,E_{\Qp}}\righteq  \TSym^n \hat{\Lcal}_{1,E_{\Qp}},\quad  \hat{\Lcal}^\dagger_{n,E_{\Qp}}\righteq \TSym^n \hat{\Lcal}^\dagger_{1,E_{\Qp}}
	\]
\end{lem}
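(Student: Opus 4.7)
The plan is to reduce to an explicit computation on the zero section, where the biextension-induced comultiplication becomes the iterated comultiplication of the formal group $\Ef^\vee$ (respectively $\widehat{E^\dagger}$).

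First I would observe that both $\hat{\Lcal}_n$ and $\TSym^n\hat{\Lcal}_1$ are locally free $\Ocal_{\Ef}$-modules of the same rank. Indeed, $\hat{\Lcal}_n$ has rank equal to the length of $\Ocal_{\Inf_e^n E^\vee}$, which is $n+1$ since $\Ef^\vee$ is a one-dimensional formal group, and $\TSym^n$ of the rank-$2$ bundle $\hat{\Lcal}_1$ also has rank $n+1$. For the dagger version the rank is $\binom{n+2}{2}$ on both sides, since $\widehat{E^\dagger}$ is two-dimensional. Once equality of ranks is in hand, integral injectivity will follow automatically from the map being an isomorphism over $\QQ_p$: the source is locally free, hence torsion-free, so any torsion kernel is zero.

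Next I would verify the generic isomorphism by restricting to the identity section. The canonical rigidification of the Poincar\'e bundle provides $e^*\hat{\Lcal}_n\cong \Ocal_{\Inf_e^n \Ef^\vee}$, and by construction the biextension comultiplication restricts along $e\times\id$ to the iterated comultiplication of $\Ocal_{\Ef^\vee}$. Choosing a coordinate $T$ on $\Ef^\vee$ so that $\Ocal_{\Inf_e^n \Ef^\vee}=R[T]/T^{n+1}$ and the formal group law satisfies $F(S,T)\equiv S+T$ modulo terms of degree $\geq 2$, the iterated comultiplication sends $T\mapsto T_1+\cdots+T_n$ inside $\bigotimes_{i=1}^n R[T_i]/T_i^2$. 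Expanding with $T_i^2=0$ gives $T^k\mapsto k!\,\sigma_k$, where $\sigma_k=\sum_{i_1<\cdots<i_k}T_{i_1}\cdots T_{i_k}$ is the standard basis vector of the degree-$k$ summand of $\TSym^n \Ocal_{\Inf_e^1 \Ef^\vee}$. In these bases the comultiplication at $e$ is the diagonal matrix with entries $0!,1!,\ldots,n!$, manifestly invertible in $R\otimes\QQ_p$. Since the map restricted at $e$ is then an isomorphism on the generic fiber, and both sides are coherent locally free sheaves on the connected formal disk $\Ef_{\QQ_p}$, the map is an isomorphism on all of $\Ef_{\QQ_p}$, and the integral injectivity follows as explained above.

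The dagger version is the same argument with the $2$-dimensional formal group $\widehat{E^\dagger}$: picking local coordinates $(T_1,T_2)$ adapted to its formal group law, the map at $e$ becomes diagonal in the multi-index basis with entries $\alpha!=\alpha_1!\alpha_2!$ for $|\alpha|\leq n$, again a unit over $\QQ_p$. The step I expect to be the main obstacle is the compatibility claim that the biextension comultiplication of $\Po$ (respectively $\Po^\dagger$) pulls back along $e\times\id$ to the iterated group-scheme comultiplication of $\Ocal_{\Ef^\vee}$ (respectively $\Ocal_{\widehat{E^\dagger}}$). This is essentially built into the biextension definition recalled in \cite[\S 9.2]{EisensteinPoincare}, but extracting it cleanly requires unravelling how the biextension isomorphism $\mu^*\Po\cong\pr_{13}^*\Po\otimes\pr_{23}^*\Po$ interacts with the rigidification at $e$.
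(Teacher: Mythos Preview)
The paper does not prove this lemma itself; it is quoted from \cite[Lemma~9.1]{EisensteinPoincare} and used as a black box, so there is no in-paper argument to compare your approach against.

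Your strategy is the natural one and works, with one small correction. The matrix you compute at $e$ is not diagonal but only upper triangular in the bases $(T^k)_k$ and $(\sigma_k)_k$: the formal group law $F(X,Y)=X+Y+a_{11}XY+\cdots$ contributes cross terms that survive modulo the $T_i^2$, so already the $n$-fold sum is $\sigma_1+a_{11}\sigma_2+\cdots$ rather than just $\sigma_1$, and hence $T^k\mapsto k!\,\sigma_k+(\text{combination of }\sigma_j\text{ with }j>k)$. The determinant is nonetheless $\prod_{k=0}^{n}k!$, so your conclusion is unaffected; the same remark applies to the two-variable dagger case, where the matrix is upper triangular with diagonal entries $\alpha!$. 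Your passage from ``isomorphism at $e$'' to ``isomorphism on all of $\Ef_{\QQ_p}$'' is fine because $\Ef$ is a formal disc over $S$: the determinant lies in $(R\otimes\QQ_p)\llbracket T\rrbracket$ with unit constant term, hence is a unit. Finally, the injectivity step uses that $\hat{\Lcal}_n$ embeds in its rationalization, which needs the $p$-adic base $R$ to be $p$-torsion-free; this holds in the paper's application ($R=\VpN$) but is worth making explicit.
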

 The construction of \cref{subsec_padictheta} provides an isomorphism
\[
	\Pof:=\Po\Big|_{\Ef\times_S\Ef^\vee}\righteq \Ocal_{\Ef\times_S\Ef^\vee}=\Ocal_{\Ef}\hat{\otimes}_{\Ocal_S} \Ocal_{\Ef^\vee} .
\]
The restriction of this trivialization along $\Ef\times_S \Inf^1_e \Ed$ provides the splitting
\begin{equation}
	\hat{\Lcal}_1=\Ocal_{\Ef}\otimes_{\Ocal_S} \Ocal_{\Inf^1_e \Ed}=\Ocal_{\Ef}\otimes_{\Ocal_S}(\Ocal_S\oplus \om_{\Ed/S}).
\end{equation}
Applying $\TSym^n$ to the isomorphism $\hat{\Lcal}_1\cong\Ocal_{\Ef}\otimes_{\Ocal_S}(\Ocal_S\oplus \om_{\Ed/S})$ gives
\[
	\hat{\Lcal}_{n,E_{\Qp}}\righteq  \bigoplus_{k=0}^n \TSym^k \om_{\Ef_{\Qp}}
\]
where we write $\om_{\Ef_{\Qp}}:=\Ocal_{\Ef_{\Qp}}\otimes_{\Ocal_S}\om_{\Ed/S}$.
\subsection{The infinitesimal splitting on the universal trivialized elliptic curve}
 If we apply this to the universal trivialized elliptic curve $\Etriv/\Mtriv$ of level $\Gamma(N)$, we obtain
\[
	\hat{\Lcal}_{n,\Etriv_{\Qp}}\righteq \bigoplus_{k=0}^n \TSym^k\om_{\Eftriv_{\Qp}}.
\]
The isomorphism $\Edftriv\righteq \Eftriv\righteq \Gmf{\Mtriv}$ provides us with a generator $\omega:=\beta^*\left( \frac{\dd T}{1+T}\right)\in \Gamma(\Eftriv,\om_{\Eftriv})$ corresponding to the invariant differential $d T/(1+T)$ on $\Gmfabs$. The tensor symmetric algebra carries a canonical divided power structure $x\mapsto x^{[k]}:=x\otimes...\otimes x$ on $\TSym^{>0}\om_{\Eftriv}$. This allows us to define
\[
	\hat{\omega}^{[k]}:= \omega^{[k]}.
\]
Similarly, the inclusion $\om_{\Edtriv/\Mtriv}\hookrightarrow \Hcal$ gives us a section $[\omega]\in\Gamma(\Mtriv,\Hcal)$. Let $[u]\in \Gamma(\Mtriv,\Hcal)$ be the unique section in the unit root subspace of $\Hcal$ with $\langle [u],[w] \rangle=1$. The pair $([u],[\omega])$ generates $\Hcal$ as $\Ocal_{\Mtriv}$-module and induces sections
\[
	\hat{\omega}^{[k],[l]}:=[\omega]^{[k]}\cdot [u]^{[l]}\in\Gamma (\Eftriv,\TSym^{k+l}\Hcal_{\Eftriv}).
\]
\begin{lem}[{\cite[Lemma 9.2.]{EisensteinPoincare}}]\label{lem_PhiLn}
		We have canonical $\Ocal_{\Eftriv}$-linear decompositions:
		\[
			\widehat{\Lcal}_{n,\Etriv_{\Qp}}\righteq \bigoplus_{k=0}^n \hat{\omega}^{[k]}\cdot \Ocal_{\Eftriv_{\Qp}},\quad \widehat{\Lcal}^\dagger_{n,\Etriv_{\Qp}}\righteq \bigoplus_{k+l\leq n} \hat{\omega}^{[k,l]}\cdot \Ocal_{\Eftriv_{\Qp}}.
		\]
		These decompositions are compatible with the transition maps 
		\[ 
		\widehat{\Lcal}_{n,\Etriv_{\Qp}}\twoheadrightarrow \widehat{\Lcal}_{n-1,\Etriv_{\Qp}},\quad \widehat{\Lcal}^\dagger_{n,\Etriv_{\Qp}}\twoheadrightarrow \widehat{\Lcal}^\dagger_{n-1,\Etriv_{\Qp}}
		\] 
		and the inclusion $\widehat{\Lcal}_{n,\Etriv_{\Qp}}\hookrightarrow \widehat{\Lcal}^\dagger_{n,\Etriv_{\Qp}}$.
\end{lem}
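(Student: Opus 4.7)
The plan is to deduce both decompositions from the trivialization of the Poincar\'e bundle over $\Ef\times_S\Ef^\vee$ constructed in \S\ref{subsec_padictheta}, combined with the co-multiplication injections of the preceding lemma, which become isomorphisms after generic-fibre base change. It therefore suffices to produce the claimed decomposition for $n=1$ and then apply $\TSym^n$; this reduces the task to exhibiting free bases for $\hat{\Lcal}_{1,\Etriv}$ and $\hat{\Lcal}^\dagger_{1,\Etriv}$.

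For $\hat{\Lcal}_1$: the splitting $\hat{\Lcal}_1\cong \Ocal_{\Ef}\otimes_{\Ocal_S}(\Ocal_S\oplus\om_{\Ed/S})$ displayed just above the lemma, combined with the trivialization $\Edftriv\righteq\Gmf{\Mtriv}$ which furnishes $\omega=\beta^*(\dd T/(1+T))$ as a generator of $\om_{\Edtriv/\Mtriv}$, yields $\hat{\Lcal}_{1,\Etriv}\cong \Ocal_{\Eftriv}\oplus \omega\cdot \Ocal_{\Eftriv}$. Taking $\TSym^n$ and passing to the generic fibre produces the free basis $\hat{\omega}^{[k]}$ for $0\leq k\leq n$, which is the first decomposition.

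For $\hat{\Lcal}^\dagger_1$: the universal property of the universal vectorial extension provides a canonical identification of $\Lie E^\dagger$ with $\Hcal=H^1_{\mathrm{dR}}(E/S)$, and hence of $\om_{E^\dagger/S}$ with $\Hcal^\vee$ (equivalently, using the cup-product pairing, with $\Hcal$ itself). On the universal trivialized curve the pair $([u],[\omega])$ — $[\omega]$ sitting in the Hodge filtration and $[u]$ being the canonical unit-root lift with $\langle [u],[\omega]\rangle=1$ — is a free $\Ocal_{\Eftriv}$-basis of $\Hcal_{\Eftriv}$. Restricting the trivialization of $\Po^\dagger$ to $\Ef\times\Inf^1_e E^\dagger$ therefore yields $\hat{\Lcal}^\dagger_{1,\Etriv}\cong \Ocal_{\Eftriv}\oplus [\omega]\cdot\Ocal_{\Eftriv}\oplus [u]\cdot\Ocal_{\Eftriv}$, and applying $\TSym^n$ decomposes $\hat{\Lcal}^\dagger_{n,\Etriv_{\Qp}}$ as $\bigoplus_{j=0}^n\TSym^j\Hcal_{\Eftriv_{\Qp}}$ with each summand freely generated by the $\hat{\omega}^{[k,l]}$ with $k+l=j$, which gives the second decomposition.

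Compatibility with the transition maps is formal, coming from the naturality of restriction along $\Inf^{n-1}_e\hookrightarrow \Inf^n_e$, and compatibility with the inclusion $\hat{\Lcal}_n\hookrightarrow \hat{\Lcal}^\dagger_n$ follows because the projection $E^\dagger\to E^\vee$ identifies the $[\omega]$-summand with the image of $\om_{E^\vee/S}$ and kills the $[u]$-summand. The main obstacle — the one non-formal step — is the verification that the trivialization of $\Po^\dagger$ constructed in \S\ref{subsec_padictheta} is genuinely compatible with the unit-root decomposition of $\Hcal$, so that the two extra directions appearing in $\hat{\Lcal}^\dagger_{1,\Etriv}$ really correspond to $[\omega]$ and $[u]$ rather than to some other basis. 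This compatibility, which pins down the interaction of the canonical splitting of $E^\dagger$ near $e$ in the ordinary case with the Frobenius structure on $\Hcal$, is established in \cite[Lemma 9.2]{EisensteinPoincare}, to which we refer for the detailed verification.
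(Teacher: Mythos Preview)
The paper does not give a self-contained proof of this lemma; it simply quotes \cite[Lemma~9.2]{EisensteinPoincare}, having set up the ingredients (the trivialization of $\Po$ over $\Ef\times\Ef^\vee$, the generator $\omega$, and the unit-root basis $[u],[\omega]$ of $\Hcal$) in the paragraphs preceding the statement. Your sketch follows this setup and is essentially the intended argument.

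One point to tighten: \S\ref{subsec_padictheta} trivializes $\Po$ over $\Ef\times_S\Ef^\vee$, not $\Po^\dagger$ over $\Ef\times_S\hat{E}^\dagger$. So when you write ``restricting the trivialization of $\Po^\dagger$ to $\Ef\times\Inf^1_e E^\dagger$'' you are invoking something that has not yet been built. The missing step --- which you do correctly isolate in your last paragraph as the non-formal ingredient --- is that in the ordinary case the unit-root subspace furnishes a canonical section of $E^\dagger\to E^\vee$ at the level of formal groups, yielding a product decomposition of $\hat{E}^\dagger$ as $\hat{E}^\vee$ times the formal vector group on $\om_{E/S}$; pulling $\Po^\dagger$ back along this decomposition and observing that on the vector-group factor only the connection varies (the underlying line bundle is trivial) is what produces the splitting $\hat{\Lcal}^\dagger_1\cong\Ocal_{\Ef}\oplus\Hcal_{\Ef}$ compatible with the unit-root decomposition of $\Hcal$. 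Your final paragraph shows you see this is the crux; the middle paragraph should not phrase the trivialization of $\Po^\dagger$ as already given.
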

Let us write $\Ecal^\triv_{\Qp}$ for the rigidification of $\Etriv$ and $]\tilde{t}[\subseteq \Ecal^\triv_{\Qp}$ for the tubular neighbourhood in $\Ecal^\triv_{\Qp}$ of the reduction of the torsion section $\tilde{t}$. Let us write $\Eftriv_{\tilde{t}}$ for the formal completion of $\Eftriv$ along ${\tilde{t}}$. Now,
\[
	\Eftriv_{\tilde{t}}\righteq \Eftriv\righteq\Gmf{\Mtriv}
\]
 yields an isomorphism of rigid analytic spaces:
\[
	]\tilde{t}[\righteq B^{-}(0,1)\times \Mcal^\triv_\Qp
\]
The isomorphism \eqref{eq_Poincare_translation} induces
\[
	\Ln|_{\Eftriv_{\tilde{t}}}\righteq \Ln|_{\Eftriv}=\hat{\Ln}.
\]
Combining this with the decomposition
\[
	\hat{\Lcal}_{n,\Etriv_{\Qp}}\righteq \bigoplus_{k=0}^n \TSym^k\om_{\Eftriv_{\Qp}}
\]
and passing to the associated rigid analytic space, we obtain the following:
\begin{cor}\label{prop_infsplit}
	There is a natural isomorphism of $\Ocal^{an}_{B^{-}(0,1)\times \Mcal^\triv_\Qp}$-modules
	\[
		\left(\Lcal_{n,\Etriv_K}^\dagger\right)^{rig}\Big|_{]\tilde{t}[} \righteq \bigoplus_{k+l\leq n} \hat{\omega}^{[k,l]} \Ocal^\an_{B^-(0,1)\times \Mcal^\triv_{\Qp}}.
	\]
	The basis $(\hat{\omega}^{[k,l]})_{0\leq k+l\leq n}$ is compatible with the transition maps of the logarithm sheaves. 
\end{cor}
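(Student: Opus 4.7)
The plan is to obtain the decomposition by combining three ingredients that are already in place: the trivialization of the Poincaré bundle around torsion sections built in \S\ref{subsec_padictheta}, the $\Ocal_{\Eftriv_{\Qp}}$-linear decomposition of $\widehat{\Lcal}^\dagger_{n,\Etriv_{\Qp}}$ recorded in \cref{lem_PhiLn}, and the passage from the formal completion of $\Eftriv$ along $\tilde{t}$ to the rigid tubular neighbourhood $]\tilde{t}[$.

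More precisely, the translation trivialization \eqref{eq_Poincare_translation}, applied not only to the ordinary Poincaré bundle but to its universal extension $\Po^\dagger$, yields a canonical isomorphism of coherent sheaves
\[
\Lcal_n^\dagger\big|_{\Eftriv_{\tilde{t}}}\;\righteq\;\Lcal_n^\dagger\big|_{\Eftriv}\;=\;\widehat{\Lcal}^\dagger_n.
\]
Composing with the decomposition $\widehat{\Lcal}^\dagger_{n,\Etriv_{\Qp}}\righteq \bigoplus_{k+l\leq n}\hat{\omega}^{[k,l]}\cdot \Ocal_{\Eftriv_{\Qp}}$ of \cref{lem_PhiLn} gives a decomposition of $\Lcal_n^\dagger|_{\Eftriv_{\tilde{t}},\Qp}$ as a free $\Ocal_{\Eftriv_{\tilde{t}},\Qp}$-module with basis $(\hat{\omega}^{[k,l]})_{k+l\leq n}$. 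Passing to the rigid analytic generic fibre, the trivialization $\beta\colon \Eftriv\righteq \Gmf{\Mtriv}$ identifies the tubular neighbourhood $]\tilde{t}[$ with $B^-(0,1)\times \Mcal^\triv_\Qp$, and the structure sheaf on $]\tilde{t}[$ is exactly the analytification of $\Ocal_{\Eftriv_{\tilde{t}},\Qp}$. Since analytification is an exact functor of $\Ocal$-modules and commutes with finite direct sums, the decomposition transports to the stated decomposition of $(\Lcal_{n,\Etriv_K}^\dagger)^{rig}|_{]\tilde{t}[}$.

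Compatibility with the transition maps $\Lcal_n^\dagger\twoheadrightarrow \Lcal_{n-1}^\dagger$ is inherited from the corresponding compatibility in \cref{lem_PhiLn}: the translation isomorphism and the analytification functor are both independent of $n$ and commute with the projections, so the basis elements $\hat{\omega}^{[k,l]}$ with $k+l\leq n-1$ map to themselves under the induced map. No new calculation is required at this step; the serious content is already encapsulated in \cref{lem_PhiLn}, while the present corollary is the formal consequence of transporting that statement to the rigid tube via \eqref{eq_Poincare_translation}. The only point requiring minor care is checking that the translation isomorphism restricts correctly to the infinitesimal thickening $\Inf^n_e E^\dagger$ used to define $\Lcal_n^\dagger$, which is immediate from its construction via the isomorphism $([N]\times \id)^*\Po\righteq (\id\times [N])^*\Po$ and the fact that $[N]$ is étale on $\Ef^\vee$ for $N$ coprime to $p$; this is the only step where one must pay attention, and it is the closest thing to an obstacle in the argument.
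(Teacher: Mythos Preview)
Your proposal is correct and follows essentially the same route as the paper: use the translation trivialization \eqref{eq_Poincare_translation} to identify the restriction of $\Lcal_n^\dagger$ to the formal completion at $\tilde t$ with $\widehat{\Lcal}^\dagger_n$, apply the decomposition of \cref{lem_PhiLn}, and then pass to the rigid analytic tube via $\beta$. The only difference is expository: the paper's text immediately preceding the corollary writes the translation step for $\Ln$ rather than $\Ln^\dagger$, whereas you are explicit that the same argument extends to $\Po^\dagger$ via its universal property; this is a harmless clarification rather than a divergence in method.
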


We can now express the connection on $\hat{\Ln}^\dagger$ explicitly:

\begin{lem}
The connection $\con{\Ln^\dagger}$ on $\hat{\Ln}^\dagger$ is explicitly given under the decomposition
		\[
			 \widehat{\Lcal}^\dagger_{n,\Etriv_{\Qp}}\righteq \bigoplus_{k+l\leq n} \hat{\omega}^{[k,l]}\cdot \Ocal_{\Eftriv_{\Qp}}.
		\]
by the formula
	\[
		\con{\Ln^\dagger}(\hat{\omega}^{[k,l]})=(l+1)\hat{\omega}^{[k,l+1]}\otimes\omega.
	\]
\end{lem}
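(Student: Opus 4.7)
The plan is to compute the connection $\con{\Ln^\dagger}$ directly using the explicit trivialization of the Poincar\'e bundle from \cref{subsec_padictheta} combined with the universal $E^\dagger$-connection $\nabla^\dagger$ on $\Po^\dagger$. The structural input is the canonical identification $\mathrm{Lie}(E^\dagger) \cong \Hcal$, under which $([\omega],[u])$ provides linear coordinates on the infinitesimal thickenings $\Inf^n_e E^\dagger$, and the decomposition of \cref{prop_infsplit} identifies the basis element $\hat{\omega}^{[k,l]}$ with the divided-power monomial $[\omega]^{[k]}[u]^{[l]}$.

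The heart of the argument is the first-order identity $\nabla^\dagger(1) = [u] \otimes \omega$ for the rigidifying section $1$ of the trivialization $\Po|_{\Eftriv \times \Eftriv^\vee} \cong \Ocal$. This follows from the defining property of $E^\dagger$ as the moduli of line bundles of degree zero with integrable connection: the first-order infinitesimal direction $[u] \in \mathrm{Lie}(E^\dagger) = \Hcal$ parametrizes the deformation of $(\Ocal,d)$ whose $1$-form representative on $\Eftriv$ is exactly $\omega$, since $[u]$ is characterized as dual to $[\omega]$ under the Poincar\'e pairing by the normalization $\langle [u],[\omega]\rangle = 1$. The biextension structure of the Poincar\'e bundle propagates this to all orders: under the identifications induced by \cref{prop_infsplit}, the universal connection acts on the divided-power algebra $\bigoplus_{k,l} \hat{\omega}^{[k,l]} \Ocal$ by multiplication with the $1$-form-valued element $[u] \otimes \omega$.

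Granting this, the formula is an elementary consequence of the divided-power identity $[u] \cdot [u]^{[l]} = (l+1)[u]^{[l+1]}$:
\[
  \con{\Ln^\dagger}(\hat{\omega}^{[k,l]}) = [\omega]^{[k]}[u]^{[l]} \cdot ([u] \otimes \omega) = (l+1)\,[\omega]^{[k]}[u]^{[l+1]} \otimes \omega = (l+1)\,\hat{\omega}^{[k,l+1]} \otimes \omega.
\]
The main obstacle is the precise verification that the universal connection is exactly multiplication by $[u] \otimes \omega$ in the canonical trivialization, with no higher-order corrections; this ultimately relies on the explicit biextension structure of the Poincar\'e bundle on formal neighborhoods of the zero sections together with the unit-root characterization of $[u]$, and can be checked by unwinding the construction of $\nabla^\dagger$ in local coordinates on $\Eftriv \times \Inf^n_e E^\dagger$.
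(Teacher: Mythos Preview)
Your approach differs substantially from the paper's, and it contains a real gap at the point you yourself flag as ``the main obstacle.''

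The paper does not attempt a direct computation of $\nabla^\dagger$ in the trivialization. Instead it first invokes \cite[Lemma 9.4]{EisensteinPoincare}, which already gives
\[
  \con{\Ln^\dagger}(\hat{\omega}^{[k,l]}) = c\,(l+1)\,\hat{\omega}^{[k,l+1]}\otimes\omega
\]
for some undetermined $c\in\Zp$. The new content here is pinning down $c=1$, and for this the paper uses a \emph{cohomological} argument: the defining property of the first logarithm sheaf is that its extension class in $\Ext^1_{\VIC{E/M}}(\Ocal_E,\Hcal_E)$ maps to $\id_\Hcal$ under the boundary map $\delta$. Restricting to $\Ef$ and computing the formal boundary $\hat{\delta}([\widehat{\Lcal}_1^\dagger])$ in two ways --- once via this defining property (giving $u\otimes[\omega]$) and once via the splitting (giving $c\cdot u\otimes[\omega]$) --- forces $c=1$. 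The point is that the characterizing extension-class condition does the work, so one never has to unwind $\nabla^\dagger$ in coordinates.

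Your route tries to establish directly that under the trivialization the connection is $d + {}$(multiplication by $[u]\otimes\omega$). The justification you offer --- that $[u]$, being Poincar\'e-dual to $[\omega]$, ``parametrizes the deformation of $(\Ocal,d)$ whose $1$-form representative on $\Eftriv$ is exactly $\omega$'' --- is not a proof. It presupposes a precise compatibility between three separate choices: the trivialization of $\Po|_{\Ef\times\Ef^\vee}$ built from quotients by the canonical subgroup, the lift of that trivialization to $\Po^\dagger|_{\Ef\times\Inf^n_e E^\dagger}$ via the unit-root splitting, and the normalization of the isomorphism $\mathrm{Lie}(E^\dagger)\cong\Hcal$. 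Each of these involves a choice of sign or scalar, and the assertion that they conspire to give exactly $[u]\otimes\omega$ (with coefficient $1$, not merely some $c\in\Zp^\times$) is precisely what has to be proved. Saying it ``can be checked by unwinding the construction of $\nabla^\dagger$ in local coordinates'' is where the argument stops being a proof. Note too that this is not a minor bookkeeping issue: the cited Lemma 9.4 already carries out the structural part of this unwinding and still only obtains the formula up to a constant, which is why the paper supplies the separate cohomological argument to fix it.
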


\begin{proof}
	We have proven in \cite[Lemma 9.4]{EisensteinPoincare} that 
	\[
		\con{\Ln^\dagger}(\hat{\omega}^{[k,l]})=c\cdot (l+1)\hat{\omega}^{[k,l+1]}\otimes \omega
	\]
	for some $c\in\Zp$. With slightly more effort, we can refine this and prove $c=1$. By definition, the basis $\hat{\omega}^{[k,l]}$ is compatible with the co-multiplication maps
	\[
		\widehat{\Lcal}_{n,E_{\Qp}}^\dagger\righteq \TSym^n \widehat{\Lcal}_{1,E_{\Qp}}^\dagger.
	\]	
	Further these co-multiplication maps are horizontal, thus it is enough to prove the case $n=1$. For simplicity let us write $M:=\Mtriv$ and $E=\Etriv$. As in \cite[\S 5.1]{katz_jacobi} let us write $\HdR{\bullet}{\Ef/M}$ for the relative cohomology of the separated completion $\hat{\Omega}^1_{\Ef/M}$ of the classical de Rham complex. Let us write $\VIC{\Ef/M}$ for the category of locally free $\Ocal_{\Ef}$-modules of finite rank equipped with an (integrable) $M$-connection, i.e. modules $\Fcal$ with an $\Ocal_M$-linear derivative
	\[
		\Fcal\rightarrow \hat{\Omega}^1_{\Ef/M}\otimes_{\Ocal_{\Ef}}\Fcal.
	\]
	Similarly, let us write $\VIC{E/M}$ for the category of locally free $\Ocal_{E}$-modules of finite rank equipped with an (integrable) $M$-connection. 	Let us consider $\Ocal_E$ and $\Hcal_E$ as objects of $\VIC{E/M}$ equipped with the trivial connection. To an extension 
	\[
		0\rightarrow \Hcal_E\rightarrow \Fcal\rightarrow \Ocal_E \rightarrow 0
	\]
	we can associate a section $\Gamma(M,\HdR{1}{E/S,\Hcal_E})$ as the image of $1\in\Gamma(S,\Ocal_S)=\Gamma(S,\HdR{0}{E/S})$ under the connecting homomorphism
\[
		0\rightarrow \HdR{0}{E/S,\Hcal_E}\rightarrow \HdR{0}{E/S,\Fcal}\rightarrow \HdR{0}{E/S} \rightarrow \HdR{1}{E/S,\Hcal_E}.
	\]
	This gives us a map
	\[
		\delta\colon \Ext^1_{\VIC{E/M}}\rightarrow \Gamma(M,\HdR{1}{E/S,\Hcal_E})=\Gamma(M,\Hcal\otimes\HdR{1}{E/M}).
	\]
	By the defining property of the logarithm sheaves, we know that $\delta([\Lcal_{1}^\dagger])$ corresponds to the identity section of $\Hcal^\vee\otimes \Hcal$ using the canonical isomorphism $\HdR{1}{E/M}\cong \Hcal^\vee$. Similarly, we have a connecting homomorphism
	\[
		\hat{\delta}\colon \Ext^1_{\VIC{\Ef/M}}\rightarrow \Gamma(M,\HdR{1}{\Ef/S,\Hcal_E})=\Gamma(M,\Hcal\otimes\HdR{1}{\Ef/M}).
	\]
	By restriction along $\Ef\hookrightarrow E$ we get a commutative diagram
	\[
		\begin{tikzcd}
			\Ext^1_{\VIC{E/M}}(\Ocal_E,\Hcal_E)\ar[r,"\delta"]\ar[d] & \Gamma(M,\Hcal\otimes\HdR{1}{E/M})\ar[d]\\
			\Ext^1_{\VIC{\Ef/M}}(\Ocal_{\Ef},\Hcal_{\Ef})\ar[r,"\hat{\delta}"] & \Gamma(M,\Hcal\otimes\HdR{1}{\Ef/M}).
		\end{tikzcd}
	\]
	It is straight-forward to check, that $\hat{\delta}([\widehat{\Lcal}_{1}^\dagger])$ is contained in $\Gamma(M,\Hcal\otimes D(\Ef/M))$ where $D(\Ef/M)\subseteq \HdR{1}{\Ef/S}$ is the submodule of primitive elements \cite[\S 5.1]{katz_jacobi}. There is a natural inclusion $\om_{E/M}\hookrightarrow D(\Ef/M)$ which is in our case an isomorphism since the co-kernel is the co-Lie algebra of the dual $p$-divisible group of $\Ef$ over $R$ \cite[(5.3.2)]{katz_jacobi}. In particular, we can identify the inclusion $D(\Ef/M)\subseteq \HdR{1}{\Ef/M}$ with the natural inclusion $\om_{E/M}\subseteq \HdR{1}{\Ef/M}$. Using this identification, the above commutative diagram and $\delta([\Lcal_{1}^\dagger])=\id_{\Hcal}$ implies
	\[
		\hat{\delta}([\widehat{\Lcal}^\dagger_{1}])=u\otimes [\omega]\in \Gamma(M,\Hcal\otimes\HdR{1}{\Ef/M}).
	\]
	Indeed, under the canonical projection $\Hcal\twoheadrightarrow \om^\vee_{E/M}$ the unit root space is mapped isomorphically onto $\om^\vee_{E/M}$ and the generator $u$ maps onto the dual $\omega^\vee$ of $\omega$. On the other hand, since the basis $\hat{\omega}^{[0,0]},\hat{\omega}^{[0,1]},\hat{\omega}^{[1,0]}$ splits the extension
	\[
		0\rightarrow \Hcal_{\Ef} \rightarrow \widehat{\Lcal}^\dagger_1 \rightarrow \Ocal_{\Ef}\rightarrow 0
	\]
	we can compute the boundary map explicitly by the formula $[\con{\Lcal_1^\dagger}(\hat{\omega}^{[0,0]})]\in \Gamma(M,\HdR{1}{\Ef/M}\otimes\Hcal)$ where
	\[
		[\cdot]\colon \hat{\Omega}^1_{\Ef/M} \rightarrow \HdR{1}{\Ef/M}
	\]
	maps a form to its associated cohomology class. Since we already know $\con{\Lcal_1^\dagger}(\hat{\omega}^{[0,0]})=c\cdot \hat{\omega}^{[0,1]}\otimes \omega$ and since $\hat{\omega}^{[0,1]}:=u$, we get
	\[
		\hat{\delta}([\widehat{\Lcal}^\dagger_{1}])=[\con{\Lcal^\dagger_1}(\hat{\omega}^{[0,0]})]=c\cdot u\otimes [\omega].
	\]
	We conclude $c=1$.\par 

\end{proof}
The Frobenius structure
\[
	\Phi_{\Ln^\dagger}\colon \phi_E^* (\Ln^\dagger)^\rig \righteq (\Ln^\dagger)^\rig
\]
induces a map on global sections
\[
	\Phi\colon \Gamma(\bar{\Ee}_K,j_D^\dagger(\Ln^\dagger))\rightarrow \Gamma(\bar{\Ee}_K,j_D^\dagger(\Ln^\dagger)),\quad \alpha\mapsto \Phi_{\Ln^\dagger}(\phi_E^*\alpha).
\]
Here, note that $j_D^\dagger$ denotes the overconvergent sections functor $j_D^\dagger:=\lim j_{V*}j_V^{-1}$ where $V$ runs over all strict neighbourhoods of $\Ucal_K$ in $\bar{\Ecal}_K$. The Frobenius structure can be expressed in terms of the basis as follows:
\begin{lem}\label{lem_Frobenius_structure}
The Frobenius structure is given by the following formula:
	\[
		\Phi(\hat{\omega}^{[k],[l]})=p^{-l}\hat{\omega}^{[k],[l]}.
	\]
In particular, $\Phi$ acts trivially on $\hat{\Lcal}_n$.
\end{lem}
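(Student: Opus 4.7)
The plan is to reduce to the case $n=1$ and then compute $\Phi$ on each of the three basis sections of $\widehat{\Lcal}_1^\dagger$ separately. For the reduction, the co-multiplication morphism $\widehat{\Lcal}_n^\dagger\hookrightarrow \TSym^n\widehat{\Lcal}_1^\dagger$ is built from the $\mathbb{G}_m$-biextension structure of the Poincar\'e bundle, which is preserved by the isogeny-isomorphisms \eqref{eq_Ln_phi} and \eqref{eq_Ln_phi2} defining $\Phi_{\Ln^\dagger}$. Hence the co-multiplication is Frobenius-equivariant, and since by definition $\hat{\omega}^{[k,l]}=[\omega]^{[k]}\cdot [u]^{[l]}$ is the divided-power monomial built from $\hat{\omega}^{[1,0]}=[\omega]$ and $\hat{\omega}^{[0,1]}=[u]$, the general formula reduces to the three cases $(k,l)\in\{(0,0),(1,0),(0,1)\}$.

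For $\hat{\omega}^{[0,0]}$: under the trivialization $\Po|_{\widehat{E}\times_S\widehat{E}^\vee}\righteq\Ocal_{\widehat{E}\times_S\widehat{E}^\vee}$ of \cref{subsec_padictheta}, this section corresponds to the constant function $1$. The trivialization itself is built out of the canonical rigidification of $\Po$ together with the same isogeny-invariance isomorphism $(\id\times\varphi_n^\vee)^*\Po\righteq(\varphi_n\times\id)^*\Po'$ that underlies \eqref{eq_Poincare_isogeny}. Consequently the trivialization is compatible with $\Phi_{\Ln^\dagger}$, and therefore $\Phi(\hat{\omega}^{[0,0]})=\hat{\omega}^{[0,0]}$.

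For $[\omega]$ and $[u]$ in $\Hcal$: since the canonical subgroup of $E^\ord$ is precisely $\widehat{E}^\ord[p]$, the restriction of the Frobenius lift $\phi_E$ to the formal group corresponds via the trivialization $\beta$ to the $p$-th power map on $\widehat{\mathbb{G}}_m$. Hence $\phi_E^*\omega=p\,\omega$ for $\omega=\beta^*(dT/(1+T))$, so the Frobenius on the untwisted first rigid cohomology sends $[\omega]$ to $p[\omega]$. For the unit root $[u]$, the normalization $\langle[u],[\omega]\rangle=1$ together with the compatibility of the cup-product pairing with Frobenius (which acts on $H^2_{\rig}$ of an elliptic curve by multiplication by $p$) forces the untwisted Frobenius to fix $[u]$. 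Recalling that $\Hcal=\Hsyn{1}{\Ee/\Ss,K(1)}$ involves the Tate twist $K(1)$, on which $\Phi$ acts by $p^{-1}$, we obtain $\Phi_{\Hcal}([\omega])=[\omega]$ and $\Phi_{\Hcal}([u])=p^{-1}[u]$. Combining the three computations gives $\Phi(\hat{\omega}^{[k,l]})=[\omega]^{[k]}(p^{-1}[u])^{[l]}=p^{-l}\hat{\omega}^{[k,l]}$, and specializing to $l=0$ (the sub-bundle $\widehat{\Lcal}_n$ has basis $\hat{\omega}^{[k]}=\hat{\omega}^{[k,0]}$) yields the triviality of $\Phi$ on $\widehat{\Lcal}_n$.

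The main obstacle is the verification that $\Phi(\hat{\omega}^{[0,0]})=\hat{\omega}^{[0,0]}$: both the trivialization $\Po|_{\widehat{E}\times\widehat{E}^\vee}\righteq\Ocal$ and the Frobenius structure $\Phi_{\Ln^\dagger}$ are constructed from the universal property of the Poincar\'e bundle using the canonical rigidification and isogeny-invariance, but one has to match these two constructions carefully on $\widehat{\Lcal}_1^\dagger$ and check that the composition $\Phi_{\Ln^\dagger}\circ\phi_E^*$ indeed transports the constant section to itself.
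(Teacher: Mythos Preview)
Your argument is correct in outline and differs substantially from the paper's proof, which consists of a single sentence citing \cite[Lemma~9.3]{EisensteinPoincare} for the formula $\hat{\omega}^{[k],[l]}\mapsto p^l\cdot \phi_E^*\hat{\omega}^{[k],[l]}$ under the inverse map $\Ln^\dagger\to\phi_E^*\Ln^\dagger$, and then notes that $\Phi_{\Ln^\dagger}$ is by construction the inverse. So the paper delegates the entire computation to the companion article, whereas you carry it out from scratch. Your reduction to $n=1$ via Frobenius-equivariance of the co-multiplication is legitimate (both the biextension structure and the isogeny isomorphisms \eqref{eq_Poincare_isogeny} are forced by the universal property of the Poincar\'e bundle, hence compatible), and your computation of $\Phi$ on $[\omega]$ and $[u]$ via the action of $\varphi$ on the formal group, the Poincar\'e pairing, and the Tate twist is exactly right.

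The obstacle you flag is genuine but resolvable along the lines you indicate. One should note two separate compatibilities: first, that $\Phi_{\Ln^\dagger}$ preserves the sub-bundle $\hat{\Lcal}_n\subseteq\hat{\Lcal}_n^\dagger$ (this holds because the isomorphism \eqref{eq_Poincare_isogeny} for $\Po^\dagger$ lies over the analogous isomorphism for $\Po$, via the projection $E^\dagger\to E^\vee$); second, that the trivialization $\Po|_{\hat{E}\times\hat{E}^\vee}\cong\Ocal$ of \cref{subsec_padictheta} intertwines $\Phi$ with the identity. For the latter, observe that the trivialization at level $n=1$ is built from the isomorphism $(\id\times\varphi^\vee)^*\Po\cong(\varphi\times\id)^*\Po'$ together with the rigidification of $\Po'$, while $\Phi_{\Lcal_1}$ is built from that same isomorphism together with base change; both constructions are pinned down by the universal property plus the rigidification, so the resulting diagrams commute. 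What your approach buys is a self-contained proof within the present paper; what the paper's approach buys is brevity at the cost of an external dependence.
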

\begin{proof}
From \cite[Lemma 9.3]{EisensteinPoincare} we have the formula
\[
	\Ln^\dagger \rightarrow\phi_E^*\Ln^\dagger, \quad \hat{\omega}^{[k],[l]}\mapsto p^l\cdot \phi^*\hat{\omega}^{[k],[l]}.
\]
The result follows, since the Frobenius structure is by definition the inverse of this map.
\end{proof}

\begin{prop}
Let us choose some natural number $D>1$ co-prime to $N$. The analytification of the section $(1-\Phi)(\lnD)$ is given by the explicit formula:
	\begin{equation}\label{eq_phi_lnd}
		\left((1-\Phi)(\lnD)\right)^\rig\Big|_{]\tilde{t}[}=\left.\sum_{k=0}^n \left( (1+s')\frac{\partial}{\partial s'} \right)^{\circ k} \pthetaDp_{\tilde{t}}(s,s')\right|_{s'=0} \hat{\omega}^{[k,0]}\otimes\omega
	\end{equation}
	Here, $\pthetaDp_{\tilde{t}}$ is the $p$-adic theta function defined in \eqref{eq_ptheta} and $s$ and $s'$ are the coordinates corresponding to the variables $S$ and $T$ in $\VpN\llbracket S,T\rrbracket$.
\end{prop}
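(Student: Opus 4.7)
\bigskip

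\textbf{Plan of proof.} The strategy is to first obtain a completely explicit formula for $\lnD$ restricted to the tubular neighborhood $]\tilde{t}[$, then apply the Frobenius structure termwise, and finally invoke \cref{prop_theta_restricted} to recognize the result as the moments of $\pthetaDp_{\tilde{t}}$. The key point is to translate the geometric data (Kronecker section + basis of $\hat{\Lcal}_n^\dagger$) into purely analytic data (power series in $s,s'$).

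\emph{Step 1: Expansion of $\lnD$.} Recall that $\lnD$ is obtained by restricting $\scan^D$ along $E \times_S \Inf^n_e E^\vee \hookrightarrow E \times_S E^\vee$, so it lands in the $\Lcal_n \subset \Lcal_n^\dagger$-part. The trivialization \eqref{eq_Poincare_translation} identifies $\Pof|_{\Eftriv_{\tilde t} \times \Edftriv}$ with $\Ocal_{\Eftriv \times \Edftriv}$, and by definition $\pthetaD_{\tilde t}(s,s')$ is the $p$-adic theta function of $\scan^D$ with respect to this trivialization. Under the decomposition of \cref{prop_infsplit}, the basis vector $\hat{\omega}^{[k,0]}$ is dual to the invariant derivation $(1+s')\partial_{s'}$ iterated $k$ times and evaluated at $s'=0$: this is exactly how the co-multiplication maps and the identification $\TSym^{k} \om_{\Eftriv}$ with the PD-divided power algebra are set up in \cite[\S 9]{EisensteinPoincare}. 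Combining these identifications with the translation \eqref{eq_Poincare_translation}, I obtain
\[
	\lnD|_{]\tilde{t}[} = \sum_{k=0}^n \left.\bigl((1+s')\partial_{s'}\bigr)^{\circ k} \pthetaD_{\tilde{t}}(s,s')\right|_{s'=0} \cdot \hat{\omega}^{[k,0]} \otimes \omega.
\]
The appearance of $\hat{\omega}^{[k,0]}$ (rather than general $\hat{\omega}^{[k,l]}$) is forced by the fact that we restrict only in the $E^\vee$-direction, not in the universal vectorial extension direction.

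\emph{Step 2: Action of the Frobenius structure.} By \cref{lem_Frobenius_structure}, $\Phi$ acts trivially on $\hat{\omega}^{[k,0]}$. The Frobenius lift $\phi_E$ on $\Ee^\ord$ acts on the formal parameter $s$ by $s \mapsto [p](s)$ (the $[p]$-series of $\Gmfabs$), since the Frobenius is induced by the quotient by the canonical subgroup and becomes the multiplication-by-$p$ map on the formal group, while on the coefficients in $\VpN$ it acts via $\Frob^*$. One must also track the Tate twist: the coefficient is $\Logsyn{n}(1)$, so the Frobenius acquires an extra factor $p^{-1}$ from $K(1)$, which exactly cancels the factor $p$ arising from $\phi_E^*\omega = p\,\omega$. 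Hence
\[
	\Phi\bigl(f(s,\cdot)\,\hat{\omega}^{[k,0]} \otimes \omega\bigr) = \Frob^* f\bigl([p](s),\cdot\bigr) \cdot \hat{\omega}^{[k,0]} \otimes \omega.
\]

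\emph{Step 3: Subtraction and application of \cref{prop_theta_restricted}.} Subtracting, and using that the invariant derivation $(1+s')\partial_{s'}$ commutes with $\Frob^*$ and with the substitution $s \mapsto [p](s)$ (both act on disjoint variables), I get
\[
	(1-\Phi)(\lnD)^\rig\bigl|_{]\tilde{t}[} = \sum_{k=0}^n \left.\bigl((1+s')\partial_{s'}\bigr)^{\circ k}\Bigl[\pthetaD_{\tilde t}(s,s') - \Frob^*\pthetaD_{\tilde t}\bigl([p](s),s'\bigr)\Bigr]\right|_{s'=0}\hat{\omega}^{[k,0]} \otimes \omega.
\]
The bracketed expression is precisely $\pthetaDp_{\tilde t}(s,s')$ by \cref{prop_theta_restricted}, which completes the proof.

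\emph{Main obstacle.} The nontrivial content is concentrated in Step 1: correctly identifying the PD-basis $\hat{\omega}^{[k,0]}$ of $\hat{\Lcal}_n \subset \hat{\Lcal}_n^\dagger$ with the moment functionals $((1+s')\partial_{s'})^{\circ k}|_{s'=0}$ applied to theta functions, and simultaneously keeping track of the differential form part $\omega$ coming from the factor $\Omega^1_{E \times E^\vee / E^\vee}$ in the definition of $\scan^D$. Steps 2 and 3 are essentially bookkeeping once Step 1 is established and the Tate twist is accounted for properly.
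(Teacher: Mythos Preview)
Your proposal is correct and follows essentially the same three-step approach as the paper: expand $\lnD$ via the $p$-adic theta function (the paper cites \cite[Lemma 9.6]{EisensteinPoincare} for your Step~1), compute the Frobenius action via \cref{lem_Frobenius_structure}, and then invoke \cref{prop_theta_restricted}. Your explicit accounting of the Tate twist in Step~2---the cancellation between the $p^{-1}$ from $K(1)$ and the factor $p$ from $\phi_E^*\omega$---is a point the paper leaves implicit, so your treatment is in fact slightly more careful there.
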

\begin{proof} Since $\lnD$ is obtained by restriction of $\scan^D$ to $\Etriv\times_{\Mtriv}\Inf^n_e \Edtriv$ and since $\pthetaD_{\tilde{t}}$ is the $p$-adic theta function associated to $\scan^D$ at ${\tilde{t}}$, we deduce from \cite[Lemma 9.6]{EisensteinPoincare} the formula
	\[
		\left((\lnD)\right)^\rig\Big|_{]\tilde{t}[}=\left.\sum_{k=0}^n \left( (1+s')\frac{\partial}{\partial s'} \right)^{\circ k} \pthetaD_{\tilde{t}}(s,s')\right|_{s'=0} \hat{\omega}^{[k,0]}\otimes\omega.
	\]
	Recall from \cref{prop_theta_restricted} the identity
	\[
		\pthetaDp_{(a,b)}(S,T):=\pthetaD_{(a,b)}(S,T)-\Frob^* \pthetaD_{(a,b)}([p](S),T).
	\]
	Let us denote by $\phi^*\colon \Ocal_{\Eftriv}\rightarrow \Ocal_{\Eftriv}$ the map induced by $\phi\colon \Eftriv\rightarrow\Eftriv$ on structure sheaves. \cref{lem_Frobenius_structure} shows the commutativity of the diagram
	\[
		\begin{tikzcd}
			\Gamma(\Ef,\widehat{\Lcal}_n)\ar[r,"\simeq"]\ar[d,"\Phi"] & \bigoplus_{k=0}^n \Gamma(\Ocal_{\Ef},\hat{\omega}^{[k]}\cdot \Ocal_{\Ef}) \ar[d,"\phi^*"] \\
			\Gamma(\Ef,\widehat{\Lcal}_n)\ar[r,"\simeq"] & \bigoplus_{k=0}^n \Gamma(\Ocal_{\Ef},\hat{\omega}^{[k]}\cdot \Ocal_{\Ef}).
		\end{tikzcd}
	\]
	In terms of the explicit coordinates on $\Eftriv$ coming from the trivialization $\Eftriv\righteq \Gmf{\Mtriv}$ we can describe the Frobenius lift $\phi^\#$ explicitly as
	\[
		\VpN\llbracket S \rrbracket \rightarrow \VpN\llbracket S \rrbracket, \quad f(S)\mapsto \Frob^* f([p](S)).
	\]
	Thus, we obtain
	\begin{align*}
		\left((1-\Phi)(\lnD)\right)^\rig\Big|_{]\tilde{t}[} & =\left.\sum_{k=0}^n \left( (1+s')\frac{\partial}{\partial s'} \right)^{\circ k} \left(\pthetaD_t(s,s') - \Frob^* \pthetaD_{(a,b)}([p](s),s') \right)\right|_{s'=0} \hat{\omega}^{[k,0]}\otimes\omega=\\
		&=\left.\sum_{k=0}^n \left( (1+s')\frac{\partial}{\partial s'} \right)^{\circ k} \pthetaDp_t(s,s')\right|_{s'=0} \hat{\omega}^{[k,0]}\otimes\omega.
	\end{align*}
\end{proof}

\section{The syntomic realization and moment functions of the Eisenstein measure}
In the following, we would like to give a more explicit description of the syntomic realization of the polylogarithm for elliptic curves with ordinary reduction. Let us consider the universal situation. Let $\Ee^\ord$ and $\Mm^\ord$ be the syntomic data associated to the ordinary locus of the modular curve of level $\Gamma(N)$. According to \cref{cor_synrepresentative} we already know that the syntomic polylogarithm is uniquely represented by the pair
\[
	([(\LnD,\rho_n)])_{n\geq 0}=\polsyn
\]
satisfying
\[
	\con{\Lcal}(\tilde{\rho}_n)=(1-\Phi)(\lnD)
\]
The sections $\LnD$ are constructed in a natural way out of the Poincar\'e bundle and appear in the de Rham realization of the elliptic polylogarithm. Our aim is to give a more explicit description of the overconvergent sections $\rho_n$. Let $(0,0)\neq(a,b)\in(\ZZ/N\ZZ)^2$ and $t\in E^\ord[N](M^\ord)$ resp. $\tilde{t}\in \Etriv[N](\Mtriv)$ be the associated $N$-torsion sections. Recall that we obtain an inclusion
\[
	\Gamma(]t[, (\Lcal_{n,E^\ord}^\dagger)^\rig) \hookrightarrow \Gamma(]\tilde{t}[, (\Lcal_{n,\Etriv}^\dagger)^\rig).
\]
by pullback along the canonical projection map. Let us write $\tilde{\rho}_n$ for the pullback of $\rho_n$. The infinitesimal splitting allows us to decompose $\rho_n$ as 
\[
	\tilde{\rho}_n|_{]\tilde{t}[}=\sum_{k+l\leq n}\eD\hat{\omega}^{[k,l]} 
\]
with $\eD\in\Gamma\left(B^-(0,1)\times\Mcal^\triv,\Ocal_{B^-(0,1)\times\Mcal^\triv_\Qp}^\an \right)$. It will be convenient to view $\eD$ as analytic functions on the open unit disc with values in the ring of generalized $p$-adic modular forms:
\[
	B^-(0,1)\rightarrow \VpN\otimes_{\Zp}\Qp,\quad x\mapsto \eD(x)
\]
The syntomic polylogarithm admits the following explicit description:
\begin{thm}\label{thm_mainThm}
	The elliptic polylogarithm in syntomic cohomology on the ordinary locus of the modular curve is given by the pair
	\[
		[(\rho_n,\LnD)]=\polsyn.
	\]
	Here, $(\LnD)_{n\geq 0}$ is the compatible system of $1$-forms with values in the logarithm sheaves describing the de Rham part. We have the following explicit description of $\tilde{\rho}_n$ in terms of moment functions of the $p$-adic Eisenstein measure: For $(a,b)\neq (0,0)$ let $t=t_{a,b}$ be the associated $N$-torsion section on the universal elliptic curve $E^\ord$ with $\Gamma(N)$-level structure. We have the decomposition
	\[
		\tilde{\rho}_n|_{]\tilde{t}[}=\sum_{k+l\leq n}\eD\hat{\omega}^{[k,l]} 
	\]
	with rigid analytic functions $(s\mapsto \eD(s))_{k,l\geq 0}$ on the open unit disc with values in the ring of generalized $p$-adic modular forms which are explicitly given by:
	\[
		\eD(s)=(-1)^{l} l! \int_{\Zp^\times\times \Zp} y^k x^{-(l+1)} (1+s)^x \dd \mu^{\mathrm{Eis,(p)}}_{D,(a,b)}(x,y)
	\]
\end{thm}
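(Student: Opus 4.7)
The identification $[(\rho_n,\LnD)] = \polsyn$ is already contained in \cref{cor_synrepresentative}, so what is new is the explicit formula for the expansion coefficients $\hat{e}_{\tilde{t},(k,l)}$ of $\tilde{\rho}_n|_{]\tilde{t}[}$ in the basis from \cref{prop_infsplit}. By the uniqueness clause of that corollary, it will suffice to translate the defining differential equation $\nabla_{\Logsyn{n}}(\rho_n)=(1-\Phi)(\LnD)$ into a system of recursions for the $\hat{e}_{\tilde{t},(k,l)}$, and then to verify that the proposed integrals solve them.

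Step 1 (Recursions from the DE). Using the connection formula $\nabla_{\Ln^\dagger}(\hat{\omega}^{[k,l]}) = (l+1)\hat{\omega}^{[k,l+1]}\otimes \omega$ from the preceding lemma together with the identity $\omega|_{]\tilde{t}[}=\dd s/(1+s)$, the $\hat{\omega}^{[k,l]}\otimes\omega$-component of $\nabla(\tilde{\rho}_n|_{]\tilde{t}[})$ is
$$
(1+s)\partial_s \hat{e}_{\tilde{t},(k,l)}(s) + l\,\hat{e}_{\tilde{t},(k,l-1)}(s),\qquad \hat{e}_{\tilde{t},(k,-1)}:=0.
$$
Combining with the explicit formula for $\bigl((1-\Phi)\lnD\bigr)^{\rig}|_{]\tilde{t}[}$ established in the preceding proposition --- which contains only $\hat{\omega}^{[k,0]}\otimes\omega$-terms, with coefficient $c_k(s):=\bigl((1+s')\partial_{s'}\bigr)^{\circ k}\pthetaDp_{\tilde{t}}(s,s')|_{s'=0}$ --- one obtains
\begin{align*}
(1+s)\partial_s \hat{e}_{\tilde{t},(k,0)}(s) &= c_k(s),\\
(1+s)\partial_s \hat{e}_{\tilde{t},(k,l)}(s) &= -l\,\hat{e}_{\tilde{t},(k,l-1)}(s)\qquad(l\geq 1).
\end{align*}

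Step 2 (Verification via the Amice transform). Differentiating under the integral sign, the operator $(1+s)\partial_s$ multiplies the kernel $(1+s)^x$ by $x$, so that
$$
(1+s)\partial_s \hat{e}_{\tilde{t},(k,l)}(s) = (-1)^l l!\int_{\Zp^\times\times\Zp} y^k x^{-l}(1+s)^x\,\dd\muEisDp.
$$
For $l\geq 1$, the right-hand side is manifestly equal to $-l\,\hat{e}_{\tilde{t},(k,l-1)}(s)$. For $l=0$, one obtains $\int y^k(1+s)^x\,\dd\muEisDp$, and the defining property of the Amice transform combined with \cref{prop_theta_restricted} identifies this integral with $c_k(s)$. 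Hence the proposed formulas solve both recursions.

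Step 3 (Rigidification and main obstacle). The recursions in the formal-group coordinate $s$ alone determine each $\hat{e}_{\tilde{t},(k,l)}$ only modulo an additive generalized $p$-adic modular form independent of $s$. These remaining ``integration constants'' must be pinned down either via the moduli-direction component of $\nabla(\tilde{\rho}_n)=(1-\Phi)(\LnD)$ --- which is forced by integrability of the connection on $\Ln^\dagger$ --- or, equivalently, by unwinding the global uniqueness of the overconvergent section $\rho_n$ provided by \cref{cor_synrepresentative} through induction on $n$ and compatibility with the transition maps $\Logsyn{n+1}\twoheadrightarrow\Logsyn{n}$, anchored at the base case by the residue condition $D^2 1_e - 1_{E[D]}$ characterising $\polsyn$. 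This is the principal difficulty: the $s$-recursion is clean and verifies against the measure directly, but rigidifying the moduli-dependent integration constants requires carefully leveraging the absolute (as opposed to $S$-relative) nature of the differential equation together with the global overconvergence of $\rho_n$.
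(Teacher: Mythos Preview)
Your Steps~1 and~2 are correct and coincide with the paper's argument: the differential equation $\nabla(\tilde{\rho}_n)=(1-\Phi)(\lnD)$, expressed in the basis $\hat{\omega}^{[k,l]}$, yields exactly the recursions you write down, and the proposed moment-function integrals satisfy them by differentiation under the integral sign.

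The gap is in Step~3. You correctly identify that the $s$-recursions determine each $\hat{e}_{\tilde{t},(k,l)}$ only up to a constant in $\VpN\otimes\Qp$, but neither of your two proposed rigidifications is carried out, and both would be substantially harder than what the paper actually does. The paper's device is a \emph{trace-zero condition} (the lemma immediately preceding the proof, \cref{SR_Pol_lemTrace}):
\[
\sum_{\zeta\in\Gmfabs[p](\Cp)}\hat{e}_{\tilde{t},(k,l)}\bigl(s+_{\Gmfabs}\zeta\bigr)=0.
\]
This is proved by showing that $\sum_{\tau\in\hat{E}^{\triv}[p]}\tilde{\Ucal}_\tau(T_\tau^*\tilde{\rho}_n)$ is a global overconvergent horizontal section of $\Logsyn{n}$, hence zero; the vanishing of its $\nabla$-image comes from the \emph{distribution relation} for the Kronecker section $\scan^D$ (equation~\eqref{eq_dist}), which gives $\sum_\tau\tilde{\Ucal}_\tau(T_\tau^*(1-\Phi)\lnD)=0$. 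Once the trace-zero condition is in hand, the integration constant $c$ is killed by $p\cdot c=\sum_\zeta c=0$, and the proposed integrals visibly satisfy the same trace condition because they are Amice transforms of measures supported on $\Zp^\times$.

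Thus the missing idea is precisely the trace-compatibility of $\rho_n$ under $p$-torsion translation, inherited from the distribution relation of $\scan^D$. Your suggestion of using the moduli-direction of the absolute connection would require making the Gauss--Manin part of $\nabla^{abs}_{\Ln^\dagger}$ explicit in the basis $\hat{\omega}^{[k,l]}$ and matching it against the lift $\LnD$ of $\lnD$, which the paper never does; your alternative of propagating global uniqueness through the transition maps does not by itself produce a \emph{local} computable constraint on $]\tilde{t}[$. The trace-zero route is both local and purely in the $s$-variable, which is why it closes the argument cleanly.
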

In the proof, we will use the differential equation
\[
	\con{\Lcal}(\tilde{\rho}_n)=(1-\Phi)(\lnD)
\]
to characterize the functions $\eD$. While this differential equation determines $\rho_n$ globally, it turns out that the local differential equation on tubular neighbourhoods of torsion sections does not have a unique solution. In \cite[Lem. 3.9]{BKT} this problem is solved by imposing a trace-zero condition making the solution unique. We follow this strategy and prove in a first step that $x\mapsto\eD(x)$ satisfies a trace-zero condition. The trace-zero condition is an immediate consequence of the distribution relation of the Kronecker section $\scan$:
\begin{lem}\label{SR_Pol_lemTrace}
	The functions $s\mapsto \eD(s)$ satisfy:
	\[
		\sum_{\zeta\in\Gmfabs[p](\Cp)}\eD\left(s+_{\Gmfabs}\zeta\right)=0,\quad \forall s\in B^-(0,1)(\Cp)
	\]
\end{lem}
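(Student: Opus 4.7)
The strategy is to exploit the explicit expression for $(1-\Phi)(\LnD)$ in \eqref{eq_phi_lnd}, which encodes it via the restricted $p$-adic theta function $\pthetaDp$, and then to propagate a distributional vanishing along the defining differential equation $\nabla\tilde{\rho}_n = (1-\Phi)(\LnD)$ of \cref{cor_synrepresentative}. The proof proceeds in three steps.

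\emph{Step 1: Distribution relation for $\pthetaDp$.} I would first prove the identity
\[
\sum_{\zeta\in\Gmfabs[p](\Cp)}\pthetaDp_{\tilde{t}}(s+_{\Gmfabs}\zeta,T)=0.
\]
The multiplicative formal group law $1+s+_{\Gmfabs}\zeta = (1+s)(1+\zeta)$ and the orthogonality $\sum_{\eta\in\mu_p}\eta^x = p\cdot\mathbf{1}_{p\Zp}(x)$ combine with the Amice-transform description of $\pthetaD$ to yield
\[
\sum_{\zeta}\pthetaD_{\tilde{t}}(s+_{\Gmfabs}\zeta,T) = p\int_{p\Zp\times\Zp}(1+s)^x(1+T)^y\,\dd\muEisD(x,y),
\]
which by the restriction identity used in the proof of \cref{prop_theta_restricted} equals $p\cdot\Frob^*\pthetaD_{\tilde{t}}([p](s),T)$. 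Since $[p]$ annihilates $\Gmfabs[p]$, the sum $\sum_\zeta\Frob^*\pthetaD_{\tilde{t}}([p](s+_{\Gmfabs}\zeta),T)$ equals $p\cdot\Frob^*\pthetaD_{\tilde{t}}([p](s),T)$ as well. Subtracting via the defining identity $\pthetaDp = \pthetaD - \Frob^*\pthetaD([p](\cdot),\cdot)$ yields the claim.

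\emph{Step 2: Recursion for the local coefficients.} By \eqref{eq_phi_lnd}, Step~1 implies $\sum_\zeta(1-\Phi)(\LnD)|_{]\tilde{t}[}(s+_{\Gmfabs}\zeta)=0$. Writing $\tilde{\rho}_n|_{]\tilde{t}[}=\sum_{k+l\leq n}\eD(s)\hat{\omega}^{[k,l]}$ and using $\nabla_{\Ln^\dagger}(\hat{\omega}^{[k,l]})=(l+1)\hat{\omega}^{[k,l+1]}\otimes\omega$ together with $\omega = \dd s/(1+s)$, comparison of the coefficients of $\hat{\omega}^{[k,l]}\otimes\omega$ in $\nabla\tilde{\rho}_n = (1-\Phi)(\LnD)$ gives
\begin{align*}
\partial\hat{\mathrm{e}}_{\tilde{t},(k,0)}(s) &= \bigl((1+s')\partial_{s'}\bigr)^{\circ k}\pthetaDp_{\tilde{t}}(s,s')\big|_{s'=0},\\
\partial\hat{\mathrm{e}}_{\tilde{t},(k,l)}(s) &= -l\,\hat{\mathrm{e}}_{\tilde{t},(k,l-1)}(s)\quad(l\geq 1),
\end{align*}
where $\partial := (1+s)\partial_s$. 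Setting $F_{k,l}(s):=\sum_\zeta \hat{\mathrm{e}}_{\tilde{t},(k,l)}(s+_{\Gmfabs}\zeta)$ and exploiting translation-invariance of $\partial$, one obtains $\partial F_{k,0}=0$ and $\partial F_{k,l} = -l\,F_{k,l-1}$ for $l\geq 1$.

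\emph{Step 3: Vanishing of the constants.} The recursion reduces the lemma to the claim $F_{k,0}\equiv 0$ for all $k$, after which $F_{k,l}\equiv 0$ follows inductively. This is the main obstacle of the proof, since the local differential equations alone admit nonzero constant solutions: the function $\log(1+s)$ is rigid analytic on $B^-(0,1)$, so the first-order relation $\partial F_{k,1} = -F_{k,0}$ does not pin down $F_{k,0}$ to zero by local considerations. The obstruction is eliminated by a global argument: the sum $\sum_{c\in C}T_c^*\tilde{\rho}_n$, taken over the canonical subgroup $C\subset\Etriv[p]$ whose formal component corresponds to $\Gmfabs[p]$ via the trivialization, is $C$-invariant and descends along $\varphi\colon \Etriv\twoheadrightarrow \Etriv/C$ to an overconvergent section of $\Ln^\dagger$ on the quotient. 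By the globalized form of Step~1 this descended section is horizontal, and the uniqueness statement of \cref{cor_synrepresentative} applied to the polylogarithmic datum on $\Etriv/C$ forces it to vanish identically. This yields $F_{k,0}=0$ and concludes the proof.
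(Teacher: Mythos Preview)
Your overall architecture matches the paper's: establish a distribution relation for $(1-\Phi)(\lnD)$, feed it into the defining differential equation to get a recursion for the traces $F_{k,l}$, and then invoke a global horizontality argument to kill the constants. Step~1 is a genuinely different and more elementary route to the \emph{local} distribution relation: you use the Amice transform and the orthogonality of $\mu_p$-characters, whereas the paper derives the relation geometrically from the distribution formula for the Kronecker section on the Poincar\'e bundle (quoted from \cite[Cor.~A.4]{EisensteinPoincare}). Your argument for Step~1 is correct and pleasant.

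The gap is in Step~3. Your Step~1 is carried out entirely on the tube $]\tilde{t}[$, via the explicit formula \eqref{eq_phi_lnd}; it shows that $\sum_\zeta (1-\Phi)(\lnD)\big|_{]\tilde{t}[}(s+_{\Gmfabs}\zeta)=0$ on that tube. But in Step~3 you need the \emph{global} vanishing of $\sum_{\tau\in C}\tilde{U}^\dagger_\tau\bigl(T_\tau^*(1-\Phi)(\lnD)\bigr)$ as an overconvergent section on all of $U_D$, in order to conclude that the summed $\tilde{\rho}_n$ is globally horizontal and hence zero. You flag this as ``the globalized form of Step~1'' but do not supply it, and it does not follow from your local computation: the tubes around $N$-torsion sections do not cover $U_D$, and vanishing of an overconvergent (non-algebraic, since $\Phi$ is involved) section on such tubes does not force global vanishing. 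The paper closes exactly this gap by proving the distribution relation $\sum_{\tau}\tilde{U}_\tau(T_\tau^*\lnD)=p\,\Phi(\lnD)$ globally, as a consequence of the Poincar\'e-bundle identity for $\scan^D$; that geometric input is what makes the global horizontality argument go through.

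Two smaller points: first, the expression $\sum_{c\in C}T_c^*\tilde{\rho}_n$ is not a priori meaningful since $T_c^*\tilde{\rho}_n$ lives in $T_c^*\Ln^\dagger$, not $\Ln^\dagger$; you need the horizontal translation isomorphisms $\tilde{U}^\dagger_\tau\colon T_\tau^*\Ln^\dagger\righteq\Ln^\dagger$ that the paper constructs from \eqref{eq_phi_translation2}. Second, the paper avoids descending to $\Etriv/C$ altogether: once the global distribution relation is in hand, the sum is an overconvergent horizontal section of $\Ln^\dagger$ on $\Etriv$ itself, and the uniqueness built into \cref{cor_synrepresentative} on $\Etriv$ forces it to vanish. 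Your descent-and-uniqueness-on-the-quotient formulation would require additional bookkeeping that the paper's argument sidesteps.
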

\begin{proof}
	Let us write $\varphi\colon \Etriv\twoheadrightarrow E':=\Etriv/C$ for the quotient by the canonical subgroup and $\Po'$ for the Poincar\'e bundle of $E'$. The universal property of the Poincar\'e bundle gives an isomorphism
	\[
		\gamma_{\varphi,\id}\colon(\varphi\times \id)^*\Po'\righteq (\id\times\varphi^\vee)^*\Po.
	\]
	For $\tau\in \ker\varphi$ we obtain
	\begin{equation}\label{eq_phi_translation}
		(T_\tau\times \varphi^\vee)^*\Po\righteq (T_\tau\circ\varphi\times \id)^*\Po'=(\varphi\times\id)^*\Po'\righteq (\id\times \varphi^\vee)^*\Po.
	\end{equation}
	We have studied such translation operators in more generality in \cite[\S 3.3, Appendix A]{EisensteinPoincare}. To be consistent with the notation of \cite{EisensteinPoincare}, let us write
	\[
		\Ucal_{ \tau,e}^{\varphi,\id}\colon (T_\tau\times \varphi^\vee)^*\Po\righteq (\id\times \varphi^\vee)^*\Po
	\]
	for the map in \eqref{eq_phi_translation}. We have proven in \cite[Corollary A.4]{EisensteinPoincare} the following formula
	\begin{equation}\label{eq_dist}
		\sum_{\tau\in \ker\varphi(S)} ([D]\times\id)^*\Ucal_{\tau,e}^{\varphi,\id}\left( (T_\tau\times \varphi^\vee)^*\scan^D \right) =([D]\times\id)^* \gamma_{\varphi,\id}\left( (\varphi \times \id)^*(s_{\mathrm{can},E'}^D)\right).
	\end{equation}
	On the other hand, restricting \eqref{eq_phi_translation} along $E\times\Inf^n_e E^\vee$ gives the isomorphism
	\[
		\tilde{\Ucal}_\tau\colon T_{\tau}^*\Ln\righteq \Ln.
	\]
	Since $\lnD$ is obtained by restriction of $\scan^D$ along $E\times\Inf^n_e E^\vee$, we can deduce from \eqref{eq_dist} the formula
	\begin{equation*}
		\sum_{\tau\in\Ef^\triv[p]} \trans_{\tau}(T_\tau^* \lnD)= p\Phi(\lnD).
	\end{equation*}
	Since $\tau\in \ker(\varphi)$ we have $\trans_{\tau}(T_\tau^*\Phi(\lnD)))=\Phi(\lnD)$, which allows us to reformulate the above formula as
	\begin{equation}\label{eq_trace_compatibility1}
		\sum_{\tau\in\Ef^\triv[p]} \trans_{\tau}(T_\tau^* (1-\Phi)(\lnD))= 0.
	\end{equation}
	Again, by the universal property of the Poincar\'e bundle $(\Po^\dagger,\nabla_{\dagger})$ with integrable connection, we obtain an horizontal isomorphism
	\begin{equation}\label{eq_phi_translation2}
		(T_\tau\times \varphi^\dagger)^*\Po^\dagger\righteq (T_\tau\circ\varphi\times \id)^*(\Po')^\dagger=(\varphi\times\id)^*(\Po')^\dagger\righteq (\id\times \varphi^\vee)^*\Po^\dagger.
	\end{equation}
	Restricting this along $E\times\Inf^n_e E^\vee$ gives us an horizontal isomorphism
	\[
		\tilde{\Ucal}^\dagger_\tau\colon T_{\tau}^*\Ln^\dagger\righteq \Ln^\dagger.
	\]
	Both translation isomorphisms are compatible with the inclusion $\Ln\subseteq  \Ln^\dagger$, i.e. we have a commutative diagram
	\[
		\begin{tikzcd}
			T_\tau^*\Ln\ar[r,hook]\ar[d,"\tilde{U}_\tau"] & T_\tau^*\Ln^\dagger \ar[d,"\tilde{U}^\dagger_\tau"]  \\
			\Ln\ar[r,hook] & \Ln^\dagger.
		\end{tikzcd}
	\]
	In particular, we obtain from $\nabla(\rho_n)=(1-\Phi)(\lnD)$ and the horizontality of $\tilde{U}^\dagger_\tau$ the formula
	\[
		\con{\Ln^\dagger}\left(\sum_{\tau\in\Ef^\triv[p]} \trans^\dagger_{\tau}(T_\tau^*\tilde{\rho}_n)\right)=\sum_{\tau\in\Ef^\triv[p]} \trans^\dagger_{\tau}(T_\tau^*\left[(1-\Phi)\lnD\right] ).
	\]
	Using \eqref{eq_trace_compatibility1} we get
	\begin{equation}\label{SR_Pol_eq9}
		\con{\Ln^\dagger}\left(\sum_{\tau\in\Ef^\triv[p]} \trans_{\tau}(T_\tau^*\tilde{\rho}_n)\right)=0.
	\end{equation}
	But the only overconvergent section $s\in\Gamma(\bar{\Ee}_K,j_D^\dagger(\Logsyn{n}))$ satisfying the differential equation
	\[
		\con{\Ln^\dagger}\left(s\right)=0
	\]
	is  the zero section. We conclude
	\[
		\sum_{\tau\in\Ef^\triv[p]} \trans_{\tau}(T_\tau^*\tilde{\rho}_n)=0
	\]
	Recall, that $\tilde{\rho}_n|_{]\tilde{t}[}=\sum_{k+l\leq n} \hat{e}_{t,(k,l)}\hat{\omega}^{[k,l]}  $, so passing to the tubular neighbourhood $]\tilde{t}[$ proves the claimed equality
	\[
		\sum_{\zeta\in\Gmfabs[p](\Cp)}\eD\left(s+_{\Gmfabs}\zeta\right)=0,\quad \forall s\in B^-(0,1)(\Cp).
	\]
\end{proof}

\begin{proof}[Proof of \cref{thm_mainThm}]
	In the following let us consider the elliptic curve $E^\triv$, i.\,e. $\lnD$ refers to the section $l_{n,E^\triv}^D$ on $\Etriv$. Recall from \cref{prop_infsplit} that $\left((1-\Phi)(\lnD)\right)_{n\geq 0}$ is mapped to
	\[
	\left.\sum_{k=0}^n \left( (1+s')\frac{\partial}{\partial s'} \right)^{\circ k} \pthetaDp_{\tilde{t}}(s,s')\right|_{s'=0} \hat{\omega}^{[k,0]}\otimes\omega
	\]
	under
	\[
		\left(\Lcal_{n,\Etriv_K}^\dagger\right)^{rig}\Big|_{]\tilde{t}[} \righteq \bigoplus_{k+l\leq n} \hat{\omega}^{[k,l]} \Ocal^\an_{B^-(0,1)\times \Mcal^\triv_{\Qp}}.
	\]
	Thus, the differential equation
	\[
		\con{\Ln^\dagger}(\tilde{\rho}_n)=(1-\Phi)(\lnD)
	\]
	can be rewritten using the infinitesimal splitting as:
	\[
		\con{\Ln^\dagger}\left( \sum_{k+l\leq n} \eD(s) \hat{\omega}^{[k,l]} \right)=\left.\sum_{k=0}^n \left( (1+s')\frac{\partial}{\partial s'} \right)^{\circ k} \pthetaDp_{\tilde{t}}(s,s')\right|_{s'=0} \hat{\omega}^{[k,0]}\otimes \omega,\quad \forall n\geq 0
	\]
	Recall, that $\pthetaDp_t(s,s')$ is the Amice transform of the $p$-adic measure $\muEisD\Big|_{\Zp^\times\times\Zp}$. This implies the formula
	\[
		\left( (1+s')\frac{\partial}{\partial s'} \right)^{\circ k} \pthetaDp_t(s,s')\Big|_{s'=0} =\int_{\Zp^\times\times\Zp} y^k(1+s)^x d\muEisD.
	\]
	From \cref{prop_infsplit} we know that the connection $\con{\Ln^\dagger}$ expresses via the infinitesimal splitting on $\bigoplus_{k\geq 0} \Ocal_{\Gmf{\Mcal^\triv}}\omega^{[k,0]}$ as
	\[
		\con{\Ln^\dagger}(\hat{\omega}^{[k,l]})=(l+1)\hat{\omega}^{[k,l+1]}\otimes \omega.
	\] 
	Thus, we obtain the following explicit system of differential equations satisfied by $\eD(s)$:
	\begin{align*}
		(1+s)\frac{\partial}{\partial s} \hat{\mathrm{e}}_{{\tilde{t}},(k,0)} (s)&=\int_{\Zp^\times\times \Zp} y^k (1+s)^x \dd \mu^{\mathrm{Eis,(p)}}_{D,(a,b)}(x,y),\quad k\geq 0\\
		(1+s)\frac{\partial}{\partial s} \hat{\mathrm{e}}_{{\tilde{t}},(k,l)} (s)&=-l\hat{\mathrm{e}}_{t,(k,l-1)} (s),\quad l>0,k\geq 0.
	\end{align*}
	Here, we have used the fact that $\pthetaDp_{\tilde{t}}$ is the Amice transform of the measure $\mu^{\mathrm{Eis,(p)}}_{D,(a,b)}$. Further, by \cref{SR_Pol_lemTrace} the functions $\eD(s)$ satisfy the following trace-zero condition:
	\[
		\sum_{\zeta\in\Gmfabs[p](\Cp)}\eD(s+_{\Gmfabs}\zeta)=0,\quad \forall s\in B^-(0,1)(\Cp)
	\]
	\textit{Claim:} The system $\left(\eD(s)\right)_{k,l\geq 0}$ is the only system of analytic functions on $B^-(0,1)$ with values in $\VpN\otimes\Qp$ satisfying:
	\begin{enumerate}
	\item $(1+s)\frac{\partial}{\partial s} \hat{\mathrm{e}}_{{\tilde{t}},(k,0)} (s)=\int_{\Zp^\times\times \Zp} y^k (1+s)^x \dd \mu^{\mathrm{Eis,(p)}}_{D,(a,b)}(x,y),\quad k\geq 0$
	\item $(1+s)\frac{\partial}{\partial s} \hat{\mathrm{e}}_{{\tilde{t}},(k,l)} (s)=-l\hat{\mathrm{e}}_{t,(k,l-1)} (s),\quad l>0,k\geq 0$
	\item $\sum_{\zeta\in\Gmfabs[p](\Cp)}\eD(s+_{\Gmfabs}\zeta)=0,\quad \forall s\in B^-(0,1)(\Cp)$.
	\end{enumerate}
	\textit{Pf. of the claim:} The functions $\hat{\mathrm{e}}_{{\tilde{t}},(k,l)} (s)$ satisfy the above conditions. For uniqueness let $k,l\geq 0$. By induction on $k$ it is enough to show that any analytic function $F$ on $B^-(0,1)$ with values in $\VpN\otimes\Qp$ satisfying
	\begin{enumerate}
	\item[(A)] $(1+s)\frac{\partial}{\partial s}F=\begin{cases} \int_{\Zp^\times\times \Zp} y^k (1+s)^x \dd \mu^{\mathrm{Eis,(p)}}_{D,(a,b)}(x,y) & \text{if } l=0\\  -l\hat{\mathrm{e}}_{{\tilde{t}},(k,l-1)} (s) & \text{if } l>0   \end{cases}$
	\item[(B)] $\sum_{\zeta\in\Gmfabs[p](\Cp)} F(s+_{\Gmfabs}\zeta)=0,\quad \forall s\in B^-(0,1)(\Cp)$
	\end{enumerate}
	satisfies $F=\hat{\mathrm{e}}_{{\tilde{t}},(k,l)}$. Indeed, since any analytic function is given by a power series, one deduces from $(A)$ that the difference of two solutions is a constant $c\in \VpN\otimes\Qp$. By $(B)$ we conclude $p\cdot c=\sum_{\zeta\in\Gmfabs[p](\Cp)} c=0$ which implies $c=0$ and proves the claim.\par 
	Now, the theorem follows from the following observation: The sequence $(e'_{k,l})_{k,l\geq 0}$ defined by
	\[
		e'_{k,l}(s):=(-1)^{l} l! \int_{\Zp^\times\times \Zp} y^k x^{-(l+1)} (1+s)^x \dd \mu^{\mathrm{Eis,(p)}}_{D,(a,b)}(x,y)
	\]	
	satisfies:	
	\begin{enumerate}
	\item $(1+s)\frac{\partial}{\partial s} e'_{k,0}(s) =\int_{\Zp^\times\times \Zp} y^k (1+s)^x \dd \mu^{\mathrm{Eis,(p)}}_{D,(a,b)}(x,y),\quad k\geq 0$
	\item $(1+s)\frac{\partial}{\partial s} e'_{k,l}(s) =-l\cdot e'_{k,l-1}(s),\quad k\geq 0,l>0$
	\item $\sum_{\zeta\in\Gmfabs[p](\Cp)} e'_{k,l}(s+_{\Gmfabs}\zeta)=0,\quad \forall s\in B^-(0,1)(\Cp),k,l\geq 0$.
	\end{enumerate}
	Indeed, $(a)$ and $(b)$ are obvious and $(c)$ follows since $e'_{k,l}$ is the Amice transform of a $p$-adic measure which is supported on $\Zp^\times$.	From the above claim we deduce $e'_{k,l}(s)=\eD(s)$ which proves the theorem.
\end{proof}

\bibliographystyle{amsalpha} 
\bibliography{SyntomicRealization}
\end{document}